\newcommand{\mc}[1]{\mathcal{#1}}
\newcommand{\floor}[1]{ \lfloor #1 \rfloor }
\newcommand{\varep}{ \varepsilon }
\newcommand{\sse} {\subseteq}
\newcommand{\R}{\mathbb{R}}
\newcommand{\E}{\mathbb{E}}
\newcommand{\ra}{\rightarrow}
\newcommand{\toinf}{\ra \infty}
\newcommand{\beq}{\begin{equation}}
\newcommand{\eeq}{\end{equation}}
\newcommand{\mbf}[1]{\mathbf{#1}}
\newtheorem{theorem}{Theorem}[section]
\newtheorem{prop}[theorem]{Proposition}
\newtheorem{lemma}[theorem]{Lemma}
\newtheorem{cor}[theorem]{Corollary}
\theoremstyle{definition}
\newtheorem{definition}{Definition}[section]
\newtheorem*{remark}{Remark}
\newcommand{\p}{\mathbb{P}}
\newcommand{\Var}[1]{\mathrm{Var}(#1)}
\newcommand{\gengraph}{\mbf{G}_{n}}
\newcommand{\geen}{\mbf{G}_n}
\newcommand{\gendeg}{\lambda}
\newcommand{\geegamma}{\mbf{G}_n}
\newcommand{\locallemmabound}{C J^{2/3} p_n^2 \rho_k(n)}
\newcommand{\troot}{\varnothing}
\newcommand{\localprop}{Property GLA}
\newcommand{\btree}{\mbf{T}}
\numberwithin{equation}{section}
\newcommand{\kayn}{\mbf{K}_n}
\newcommand{\remainderbound}{C J p_n^2 (\delta_k^{1/2} + \varep_k(n)^{1/4} + \rho_k(n)^{1/4})}
\newcommand{\jointcouplingprob}{\varep_k(n) + \frac{C (\gendeg_n + 1)^{2k}}{n} + 2 d_{TV}(F_w^{(n)}, F_w)}
\newcommand{\lra}{\longrightarrow}
\newcommand{\beeg}{\mbf{G}}
\begin{document}
\begin{frontmatter}

\title{Central limit theorems for combinatorial optimization problems on sparse Erd\H{o}s-R\'{e}nyi graphs}
\runtitle{Sparse optimization CLTS}

\begin{aug}
    \author{\fnms{Sky} \snm{Cao}\thanksref{t1}\ead[label=e1]{skycao@stanford.edu}}
    \runauthor{Sky Cao}
    \affiliation{Stanford University}
    
    \thankstext{t1}{Research was supported by NSF grant DMS-1501767.}

    \address{Department of Statistics \\ 
            Stanford University \\ 
            Sequoia Hall, 390 Jane Stanford Way \\ 
            Stanford, CA 94305 \\ \\ 
            \printead{e1}}
\end{aug}

\begin{abstract}
For random combinatorial optimization problems, there has been much progress in establishing laws of large numbers and computing limiting constants for the optimal values of various problems. However, there has not been as much success in proving central limit theorems. This paper introduces a method for establishing central limit theorems in the sparse graph setting. It works for problems that display a key property which has been variously called ``endogeny", ``long-range independence", and ``replica symmetry" in the literature. Examples of such problems are maximum weight matching, $\lambda$-diluted minimum matching, and optimal edge cover.
\end{abstract}

\begin{keyword}[class=MSC]
\kwd[Primary ]{60F05}
\kwd[; secondary ]{90C27, 82B44}
\end{keyword}

\begin{keyword}
\kwd{Central limit theorem}
\kwd{combinatorial optimization}
\kwd{Erd\H{o}s-R\'{e}nyi graph}
\kwd{Stein's method}
\kwd{generalized perturbative approach}
\kwd{endogeny}
\kwd{long-range independence}
\kwd{replica symmetry}
\kwd{maximum weight matching}
\kwd{minimum matching}
\kwd{optimal edge cover.}
\end{keyword}

\end{frontmatter}

\tableofcontents

\section{Introduction}

\subsection{Background and motivation}

Combinatorial optimization problems are in essence functions on weighted graphs. By making the underlying weighted graph random, we may obtain a random combinatorial optimization problem. The basic object of study then becomes the optimal value of the problem, which is now a random variable. There are general strategies for establishing laws of large numbers for this random variable in various settings. When the weighted graph comes from Euclidean points (say $n$ i.i.d. points from the unit square), certain subadditive properties may be exploited. For more details and references in the Euclidean setting, see the monographs by Steele \cite{Steele1997} or Yukich \cite{Yuk1998}. 

Unfortunately, in the Euclidean setting, not much is known about limiting constants (a notable exception is the Euclidean bipartite matching problem, due to recent work by Caracciolo et al. \cite{CLPS2014}, and Ambrosio et al. \cite{AST2019}). One of the main difficulties seems to be the correlation between Euclidean distances. We can thus obtain a more tractable mathematical problem by simply making all distances independent. In other words, the random weighted graph is now obtained by starting with a complete graph, and then giving each edge an i.i.d. edge weight. This is called the mean field setting. A related setting is where the graph is a sparse random graph (e.g. Erd\H{o}s-R\'{e}nyi or random regular); this is called the sparse graph setting. In the 1980s, statistical physicists obtained predictions on the limiting constants for various combinatorial optimization problems in the mean field setting. See W\"{a}stlund \cite{Wast2009, Wast2012} for a list of references. In 2001, Aldous \cite{Ald2001} provided the first rigorous proof of one of these predictions, for the minimum matching problem. His general proof strategy was a rigorous version of the Cavity method from statistical physics. It is called the Objective method, or the Local weak convergence method; see Aldous and Steele \cite{AldSte2004} for a survey. The Objective method gives a general purpose approach to computing limiting constants in the mean field and sparse graph settings, and has been applied to other problems; see \cite{AddBerr2013, GNS2005, JasTat2011, Khand2014, KhanSund2014, Salez2013, Wast2009, Wast2012} for an incomplete list. Besides the objective method, there are also other ways of computing limiting constants; \cite{Frieze1985, HessWast2010, LinWast2003, NPS2005, Wast2005, Wast2008, Wast2009b, Wast2010} is an extensive, yet incomplete list.

Thus for many problems, we understand very well the first order behavior, so let us now look at the fluctuations. It is commonly believed that the optimal value for various random combinatorial optimization problems should be asymptotically Normal; see e.g. the discussion in Chatterjee \cite[Section 5]{Ch2014}. However, in the Euclidean setting, we are only aware of two problems for which a central limit theorem has been proven: minimal spanning tree (Alexander \cite{Alex1996}, Kesten and Lee \cite{KestLee1996}), and Euclidean bipartite matching (del Barrio and Loubes \cite{DBL2019}). Minimal spanning tree is particularly amenable to mathematical analysis because there is a greedy algorithm for solving the problem, which leads to many convenient properties. Such properties were also used by Chatterjee and Sen \cite{ChSen2013} to obtain rates of convergence for minimal spanning tree in the Euclidean and lattice settings. The convenient property of Euclidean bipartite matching is that it may be written as an optimal transport problem, and thus techniques from optimal transport theory may be used.

There are also general central limit theorems for functions of Euclidean point processes; see Yukich \cite{Yuk2013} for a survey and references, and see L\`{a}chieze-Rey et al. \cite{LSY2019} for a recent result. However, for combinatorial optimization problems, verifying the conditions of the general theorems seems to be an open problem. 

Turning now to the mean field setting, we are only aware of a central limit theorem for minimal spanning tree (Janson \cite{Janson1995}), whose proof also uses convenient properties of the problem. For minimum matching, a conjecture is given by Hessler and W\"{a}stlund \cite{HessWast2008}. 

In any of the settings mentioned (Euclidean, lattice, mean field, sparse graph), there does not seem to be a general purpose strategy for obtaining central limit theorems. The present paper seeks to make a dent in this direction, for the sparse graph (more specifically, sparse Erd\H{o}s-R\'{e}nyi) setting. In particular, a general central limit theorem is proven, and is applied to give central limit theorems for various combinatorial optimization problems that have been previously studied in the literature. 




\subsection{Setting}

For $\gendeg > 0$, let $p_n := \gendeg /n$, or more generally, $np_n \ra \gendeg$. In words, $\gendeg$ is the asymptotic average vertex degree. With $p_n$ implicit, let $G_n$ be an Erd\H{o}s-R\'{e}nyi graph on $n$ vertices $[n] := \{1, \ldots, n\}$, with edge probability $p_n$. Additionally, $G_n$ will have edge weights, which are i.i.d. from some non-negative distribution $F_w^{(n)}$ which may depend on $n$. However, we will assume that $F_w^{(n)}$ converges in total variation to some distribution $F_w$. With $p_n$ and $F_w^{(n)}$ implicit, we will denote this weighted graph by $\mbf{G}_n$, which may be represented by a pair $(W^n, B^n)$, where $W^n, B^n$ are independent, with $W^n = (w_{ij}^n, 1 \leq i < j \leq n)$, and $B^n = (b_{ij}^n, 1 \leq i  < j \leq n)$. The entries of $W^n$ are i.i.d. from $F_w^{(n)}$, and the entries of $B^n$ are i.i.d. $\mathrm{Bernoulli}(p_n)$. For notational convenience, we will often hide the dependence on $n$ and write $W, B$ instead of $W^n, B^n$. Generic vertices will typically be denoted $v, u$, and as such for edges $e = (v, u)$, we will often write $w_e = w_{(v, u)} = w_{(u, v)}$, and $b_e = b_{(v, u)} = b_{(u, v)}$. Note we still refer to $e = (v, u)$ as an ``edge", even if it is not present in the weighted graph $\geen$.

We will study optimization problems, denoted by a function $f$ which takes weighted graphs as input. Under certain conditions on $f$, we will be able to show that 
\[ \frac{f(\geen) - \E f(\geen)}{\sqrt{\Var{f(\geen)}}} \stackrel{d}{\lra} N(0, 1).\]
In this paper, $C$ will denote a numerical constant which may always taken to be larger, and which may change from line to line, or even within a line.


\section{Main Results}\label{main-results}

In this section, we will introduce the combinatorial optimization problems that are considered, and collect the main results. As an overview, Section \ref{general-result-section} introduces the general central limit theorem (Theorem \ref{main-result}), and Sections \ref{max-weight-section}-\ref{opt-edge-cover} describe applications of the general theorem. The proof of the general theorem (Section \ref{proofs-section}) is by the generalized perturbative approach to Stein's method (introduced by Chatterjee in \cite{Ch2008}, see also his survey \cite{Ch2014}). Although the general theorem is for the sparse graph setting, Section \ref{opt-edge-cover} actually gives a central limit theorem in the mean field setting. The basic idea is that the mean field setting may be approximated by the sparse graph setting; this idea comes from W\"{a}stlund \cite{Wast2009, Wast2012}.


The key assumption in the general theorem is {\localprop}, which is introduced in Definition \ref{local-prop-def}. Various forms of this key assumption have appeared before in the literature: Aldous and Bandyopadhyay \cite{AldBan2005} call it ``endogeny", Gamarnik et al. \cite{GNS2005} call it ``long-range independence", and W\"{a}stlund \cite{Wast2009, Wast2012} calls it ``replica symmetry". The reason for this is because in applying the Objective method to compute the limiting constant for a given combinatorial optimization problem, the key problem-specific step is in verifying endogeny/long-range independence/replica symmetry. Thus this paper may be summarized as follows: in the sparse Erd\H{o}s-R\'{e}nyi setting, if we can compute the limiting constant for a given problem by the Objective method, then (assuming certain reasonable technical conditions) we also get a central limit theorem.



The first two combinatorial optimization problems we introduce deal with matchings on graphs. A matching of a graph is a collection of edges such that each vertex is incident to at most one edge of the collection. Given a weighted graph, we may naturally define the weight of a matching to be the total sum of edge weights over edges in the matching. 

\subsection{Maximum weight matching}\label{max-weight-section}

Given a weighted graph, we define the maximum weight matching to be the matching with maximal weight. Fix $\lambda > 0$. Let $\geen$ be the weighted graph with $p_n = \lambda / n$, and i.i.d. $\mathrm{Exp}(1)$ edge weights (so in this case, the edge weight distribution does not depend on $n$). Let $M_n = M(\geen)$ be the weight of the maximum weight matching of $\geen$. In \cite{GNS2005}, it was proven that maximum weight matching possesses long-range independence, which allowed the authors to show $M_n / n \stackrel{p}{\ra} \beta(\lambda)$, with $\beta(\lambda)$ explicitly characterized. The following theorem gives a central limit theorem for $M_n$.


\begin{theorem}\label{max-weight-match-normal}
We have
\[ \frac{M_n - \E M_n}{\sqrt{\Var{M_n}}} \stackrel{d}{\lra} N(0, 1). \]
\end{theorem}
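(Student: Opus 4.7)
The plan is to obtain Theorem \ref{max-weight-match-normal} as an application of the general central limit theorem (Theorem \ref{main-result}) from Section \ref{general-result-section}. Thus the proof reduces to verifying, for the maximum weight matching functional $M$ with $\mathrm{Exp}(1)$ edge weights and $p_n = \lambda/n$, the hypotheses of that theorem. In broad strokes, these hypotheses fall into three classes: moment/tail conditions on $F_w^{(n)}$, an edge-resampling stability estimate on $M$, and the key local structure condition, \localprop{}.

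The first two classes are essentially routine. Since $F_w^{(n)} = F_w = \mathrm{Exp}(1)$ for every $n$, total variation convergence is trivial and all moment conditions on the edge weights are automatic. For the stability estimate, I would use the standard augmenting/alternating path argument: if $\geen$ and $\geen'$ differ only in the state $(w_e, b_e)$ of a single edge $e$, then maximum matchings on the two graphs can be chosen so their symmetric difference is an alternating path through $e$. The resulting change in $M$ is then controlled by the sum of the weights along that path, and by monotonicity of the matching value in edge weights, in fact by a constant multiple of $w_e$ (plus $w_e'$). Exponential tails for $F_w$ thus supply any $L^p$ bound the general theorem demands.

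The heart of the argument is verifying \localprop{} for maximum weight matching. The idea here is to exploit the endogeny/long-range independence result of Gamarnik, Nowicki and Swirszcz \cite{GNS2005}: they showed that on the Poisson($\lambda$)-Galton-Watson limit tree, the maximum-weight-matching recursive distributional equation admits a unique invariant distribution, and moreover that the root's status in the optimal matching is measurable with respect to the tree's edge weights. In concrete terms, my strategy is to introduce a depth-$k$ truncation of the matching functional, define the local contribution of each vertex $v$ via the solution of the matching problem on its $k$-neighborhood in $\geen$, and couple this with the infinite-tree solution. By the results of \cite{GNS2005}, as $k \to \infty$ the root contribution converges to the infinite-tree contribution, which is exactly the kind of statement Definition \ref{local-prop-def} asks for.

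The main obstacle I expect is that \cite{GNS2005} gives qualitative convergence, whereas Theorem \ref{main-result} requires an effective rate: one must make the error terms $\delta_k$, $\varep_k(n)$, $\rho_k(n)$ appearing in \remainderbound{} decay fast enough in $k$ (and slow enough in $n$) that the Stein-method remainder is negligible. For max weight matching I would attack this by a direct contraction argument for the recursive distributional operator associated with the matching equation, which formally is a soft-max-type map; with the right choice of metric on distributions (likely a weighted Wasserstein or Kantorovich metric adapted to the exponential weight tails), the operator should be a strict contraction, giving geometric decay of $\delta_k$. Once this quantitative endogeny is in hand, feeding it into Theorem \ref{main-result} should conclude the proof.
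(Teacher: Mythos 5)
There are two genuine gaps in your plan.

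First, and most seriously, you omit the variance lower bound $\liminf_n n^{-1}\Var{M_n} > 0$. This is an explicit hypothesis of Corollary~\ref{cor-main-result}, and it is also implicitly what makes the bound in Theorem~\ref{main-result} non-vacuous: the rate there involves factors of $(n/\sigma_n^2)^{1/2}$ and $(n/\sigma_n^2)^{3/4}$, so without a matching variance lower bound the right-hand side does not go to zero. Establishing this lower bound is a nontrivial step which the paper handles by a separate argument (Lemma~\ref{max-match-var-lower-bd}), based on Chatterjee's general framework in \cite{Ch2018}: one rescales the edge weights by $1-\varepsilon$ with $\varepsilon \sim n^{-1/2}$, bounds the total variation distance between the law of $M_n$ and its rescaled version, and uses $M_n/n \stackrel{p}{\ra} \beta(\lambda) > 0$ from \cite{GNS2005} to conclude the two values are well-separated with non-negligible probability. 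Your sketch has no analogue of this step and cannot conclude the CLT without it.

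Second, you have convinced yourself that you need a \emph{quantitative} contraction rate for $\delta_k$ (a geometric decay via a Wasserstein-type contraction for the operator of the recursive distributional equation). That is not necessary here, and is in fact a red herring: the paper passes through Corollary~\ref{cor-main-result}, which only requires the qualitative statement $\lim_{k\to\infty}\delta_k = 0$. Since $\lambda$ is fixed for maximum weight matching, one can first send $n\to\infty$ (killing $\varep_k(n)$ and $\rho_k(n)$ at any fixed $k$) and then $k\to\infty$. The qualitative convergence is exactly what \cite{GNS2005} supplies: the paper defines $h_k(\troot)$ via the truncated recursion, shows $h_{2r}(\troot)$ is non-decreasing and $h_{2r+1}(\troot)$ is non-increasing with $h_{2r} \leq h_{2r+1}$, and uses the distributional convergence $h_k(\varnothing) \stackrel{d}{\to} X_*$ from \cite{GNS2005} to conclude $h^L \stackrel{d}{=} h^U$, hence $h^L = h^U$ a.s.\ (since one is $\leq$ the other). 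No contraction argument is needed, and pursuing one would add difficulty without payoff. A quantitative rate of the kind you describe is only needed in the optimal edge cover application, where $\lambda$ must grow with $n$. Finally, your description of the bracketing step is too vague: the construction must exploit the parity trick (even-depth truncation gives a lower bracket, odd-depth gives an upper bracket), and the verification of \eqref{add-vertex-bracket} proceeds by induction on that structure, which your text does not articulate.
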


\subsection{\texorpdfstring{$\lambda$-diluted minimum matching}{}}\label{lambda-min-match}

Fix $\lambda > 0$. Let $\kayn$ denote the complete graph with i.i.d. edge weights distributed as $n \text{Exp}(1)$. For a matching of $\kayn$, define its {\it $\lambda$-diluted cost} as the sum of the edge weights in the matching, plus $\lambda / 2$ times the number of unmatched vertices. Let $M_\lambda(\kayn)$ be the minimal $\lambda$-diluted cost among all matchings. 

A priori, this problem doesn't seem to be an optimization problem on a sparse graph. But observe that in finding the $\lambda$-diluted minimum matching, we may ignore all edges in $\kayn$ of weight larger than $\lambda$. Thus $M_\lambda$ is actually a function of $\kayn(\lambda)$, the subgraph of $\kayn$ consisting of all edges with weight at most $\lambda$. Observe that $\kayn(\lambda)$ is exactly a weighted sparse Erd\H{o}s-R\'{e}nyi graph where  $p_n = 1 - e^{-\lambda / n} \approx \lambda / n$, and $F_w^{(n)}$ is the distribution of $n \text{Exp}(1)$ conditioned to lie in $[0, \lambda]$ (which converges in total variation to $\text{Unif}[0, \lambda]$ as $n \toinf$). Thus $\lambda$-diluted minimum matching can be made to fit into our framework, and thus we have a central limit theorem. 

\begin{theorem}\label{lambda-diluted-match-normal}
We have
\[ \frac{M_\lambda(\kayn) - \E M_\lambda(\kayn)}{\sqrt{\Var{M_\lambda(\kayn)}}} \stackrel{d}{\lra} N(0, 1). \]
\end{theorem}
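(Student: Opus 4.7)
The plan is to reduce Theorem \ref{lambda-diluted-match-normal} to the general central limit theorem (Theorem \ref{main-result}) by viewing $M_\lambda$ as an optimization problem $f$ on the sparse weighted graph $\kayn(\lambda)$ described in the paragraph preceding the statement. Specifically, I would define $f(\geen)$ on a weighted sparse graph $\geen$ to be the minimum, over all matchings $\mathcal{M}$ of $\geen$, of $\sum_{e \in \mathcal{M}} w_e + (\lambda/2)\cdot(\text{number of vertices not covered by }\mathcal{M})$. Since any edge of $\kayn$ with weight exceeding $\lambda$ is strictly worse than leaving both endpoints unmatched (at combined cost $\lambda$), such edges are never used, hence $M_\lambda(\kayn) = f(\kayn(\lambda))$ almost surely. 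Then $\kayn(\lambda)$ is sparse Erd\H{o}s-R\'{e}nyi with $p_n = 1 - e^{-\lambda/n}$, satisfying $np_n \to \lambda$, and with edge weight distribution $F_w^{(n)}$ equal to $n\mathrm{Exp}(1)$ conditioned on $[0,\lambda]$, which has density $(n^{-1} e^{-x/n})/(1-e^{-\lambda/n})$ on $[0,\lambda]$. A direct estimate shows this converges in total variation to $\mathrm{Unif}[0,\lambda] =: F_w$, as asserted in the excerpt.

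With the setting verified, the core task is to check the problem-specific hypotheses of Theorem \ref{main-result}, chiefly \localprop{} for the limiting recursive tree process associated with $\lambda$-diluted minimum matching on the Poisson-weighted Galton--Watson tree with offspring distribution $\mathrm{Poisson}(\lambda)$ and i.i.d.\ $\mathrm{Unif}[0,\lambda]$ edge weights. This is where essentially all of the problem-specific content lives. Fortunately, this is exactly the step W\"{a}stlund \cite{Wast2009, Wast2012} carries out when proving the limit $M_\lambda(\kayn)/n \to \alpha(\lambda)$: the relevant endogeny / long-range independence / replica symmetry statement for $\lambda$-diluted minimum matching is already in the literature, and can be cited. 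I would also need to verify the ancillary moment and boundedness conditions that Theorem \ref{main-result} requires, but these reduce to easy facts: $f$ is $1$-Lipschitz in each edge weight (adding or removing an edge shifts the optimum by at most its weight, bounded by $\lambda$, and similarly for toggling $b_e$), the weights are uniformly bounded by $\lambda$, and variance non-degeneracy follows by the standard argument of exhibiting a single edge swap that changes $f$ by a non-trivial amount on an event of positive probability.

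The main obstacle, therefore, is philosophical rather than technical: one must confirm that \localprop{} as stated in Definition \ref{local-prop-def} is in fact the same key property that is verified for $\lambda$-diluted minimum matching in \cite{Wast2009, Wast2012}, possibly after translating between the recursive distributional equation / cavity formulations used there and the formulation used here. Once that identification is made, the other hypotheses of Theorem \ref{main-result} are routine for this problem since the weights are bounded and the cost function is $1$-Lipschitz in each coordinate, and the CLT follows immediately. A final comment is that because the reduction $M_\lambda(\kayn) = f(\kayn(\lambda))$ is an exact equality (not an approximation), the variances and means transfer without loss, so no additional Slutsky-type argument is needed to pass from the sparse-graph CLT for $f(\kayn(\lambda))$ to the stated CLT for $M_\lambda(\kayn)$.
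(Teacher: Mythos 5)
Your overall route matches the paper's: cast $M_\lambda$ as an optimization problem on the sparse weighted graph $\kayn(\lambda)$ with $p_n = 1 - e^{-\lambda/n}$ and $F_w^{(n)} = n\mathrm{Exp}(1)$ conditioned on $[0,\lambda]$; derive local bracketing approximations from the recursion for $h_\lambda(\mbf{G},v) = M_\lambda(\mbf{G}) - M_\lambda(\mbf{G}-v)$; quote W\"{a}stlund's replica-symmetry theorem (Theorem 3.3 of \cite{ParWast2017}, or \cite{Wast2009,Wast2012}) to verify {\localprop}; and take $H(w_e,w_e') \equiv \lambda$ using the a priori bound $h_\lambda \in [-\lambda/2,\lambda/2]$. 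The paper actually funnels these checks through the streamlined Corollary \ref{cor-main-result} rather than Theorem \ref{main-result} directly, but this is a cosmetic packaging difference, and your observation that the reduction $M_\lambda(\kayn) = f(\kayn(\lambda))$ is exact (so no Slutsky step is needed) is correct.

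There is, however, a genuine gap in your treatment of the variance. You write that non-degeneracy "follows by the standard argument of exhibiting a single edge swap that changes $f$ by a non-trivial amount on an event of positive probability," but that only yields $\Var{M_\lambda(\kayn)} \geq c$ for a constant $c>0$. What Corollary \ref{cor-main-result} requires is $\liminf_n n^{-1}\Var{M_\lambda(\kayn)} > 0$, an order-$n$ lower bound, and this is the matching lower bound to the Efron--Stein $O(n)$ upper bound implied by \eqref{f-regularity}. Obtaining it needs aggregate control over all $\binom{n}{2}$ edges, not a single perturbation. The paper adapts the proof of Theorem 2.9 of \cite{Ch2018}: after rescaling so the edge weights are $\mathrm{Exp}(1)$, it introduces a deterministic monotone reparametrization $a_{ij}' + \alpha n^{-1}\phi(a_{ij}') = a_{ij}$ with a carefully chosen $\phi$, bounds $d_{TV}(\mc{L}_A,\mc{L}_{A'}) \leq C\alpha$, and then shows $M_n - M_n' \gtrsim \alpha\beta |D_n \cap B_n|/n^{3/2}$, where $D_n$ is the set of vertices whose nearest-neighbor distance is at least $\beta/n$ and $B_n$ is the set of matched vertices. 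The final ingredient is showing that $|D_n \cap B_n|/n$ is bounded away from zero in probability, which itself invokes problem-specific inputs --- Proposition 3.1 and the proof of Proposition 2.11 of \cite{Wast2012} --- to establish that the limiting fraction of unmatched vertices is strictly less than one. None of this is captured by the local-influence heuristic you sketch, so this step of your proposal would not go through as written.
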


To give some background, this problem was introduced by W\"{a}stlund \cite{Wast2009, Wast2012}, see also his paper with Parisi \cite{ParWast2017}. These papers all study the minimum matching problem, which is the $\lambda \toinf$ limit of the $\lambda$-diluted minimum matching problem. The $\lambda$-diluted problem was introduced as a more localized version of minimum matching, and as such proved easier to analyze. Replica symmetry was shown for $M_\lambda$, which led to the proof of $M_\lambda(\kayn) / n \stackrel{p}{\ra} \beta(\lambda)$, with $\beta(\lambda)$ explicitly characterized. Results on minimum matching were then deduced from the results on the $\lambda$-diluted problem by taking $\lambda \toinf$. In this way one can think of the minimum matching problem on a complete graph as ``essentially" a sparse graph problem.


\subsection{Optimal edge cover}\label{opt-edge-cover}

As before, let $\kayn$ be the complete graph with i.i.d. edge weights distributed as $n \text{Exp}(1)$. An edge cover is a collection of edges such that each vertex of $\kayn$ is incident to at least one edge of the collection. Naturally, the cost of an edge cover is the sum of edge weights over all edges in the edge cover. The optimal edge cover is defined to be the edge cover of minimal weight, and its cost is denoted $EC(\kayn)$. 

\begin{theorem}\label{opt-edge-cover-clt}
We have
\[ \frac{EC(\kayn) - \E EC(\kayn)}{\sqrt{\Var{EC(\kayn)}}} \stackrel{d}{\lra} N(0, 1). \]
\end{theorem}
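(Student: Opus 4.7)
The plan is to reduce optimal edge cover on $\kayn$ to the sparse Erd\H{o}s-R\'{e}nyi framework via a $\lambda$-diluted relaxation, apply Theorem \ref{main-result}, and then pass $\lambda \toinf$. This parallels the treatment of $\lambda$-diluted minimum matching in Section \ref{lambda-min-match}, except that here $\lambda \toinf$ is a genuine approximation rather than an exact reduction: since every vertex must be covered, one cannot simply discard heavy edges. Concretely, for each $\lambda > 0$ let $EC_\lambda(\kayn)$ denote the minimum over edge subsets $S$ with $\max_{e \in S} w_e \leq \lambda$ of
\[ \sum_{e \in S} w_e \; + \; \lambda \cdot \#\{v \in [n] : v \text{ is incident to no edge of } S\}. \]
As in the diluted matching case, $EC_\lambda$ depends only on $\kayn(\lambda)$, the weighted subgraph of edges of weight at most $\lambda$. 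This is a sparse Erd\H{o}s-R\'{e}nyi graph with $p_n = 1 - e^{-\lambda/n} \approx \lambda/n$ and edge-weight law the $n\mathrm{Exp}(1)$ distribution conditioned to $[0,\lambda]$, which converges in total variation to $\mathrm{Unif}[0,\lambda]$; hence it fits the framework of Section \ref{main-results}.

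Next I would apply Theorem \ref{main-result} to $EC_\lambda$ at each fixed $\lambda$. The nontrivial input is {\localprop}, which by the discussion in the paper should amount to endogeny of the recursive distributional equation for edge-cover message passing on the Galton--Watson tree with $\mathrm{Poisson}(\lambda)$ offspring and $\mathrm{Unif}[0,\lambda]$ edge weights. I would recast the existing objective-method analysis of optimal edge cover (which implicitly establishes the corresponding fixed-point equation) to extract an endogenous solution in the diluted form. The remaining technical conditions (local stability of $EC_\lambda$ under single-edge resampling, moment bounds on weights) follow from boundedness of weights in $[0,\lambda]$ and from the Lipschitz character of the diluted objective: a single-edge perturbation of the optimum is at most $\lambda$, which is enough for the variance and covariance estimates needed by Theorem \ref{main-result}. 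This yields a CLT for $EC_\lambda(\kayn)$ at each fixed $\lambda$.

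To pass to $EC$, one needs an $L^2$ comparison uniform in $n$. In one direction, restricting an optimal edge cover to its sub-$\lambda$ edges and paying $\lambda$ per newly uncovered vertex gives $EC_\lambda \leq EC + \lambda N_\lambda$, where $N_\lambda$ counts the affected vertices. In the other direction, augmenting a diluted optimum by the minimum-weight incident edge of each penalized vertex gives $EC \leq EC_\lambda + R_\lambda$, with $\E R_\lambda = O(n e^{-\lambda})$, since the minimum of the $n-1$ incident $n\mathrm{Exp}(1)$ weights is of order one and only about $n e^{-\lambda}$ vertices are penalized on average. Standard extreme-value moment estimates then yield $\E[(EC - EC_\lambda)^2] \leq n\,\eta(\lambda)$ with $\eta(\lambda) \ra 0$. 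Combined with a $\Var{EC_\lambda(\kayn)} = \Theta(n)$ estimate expected from Step 2, this gives $(EC - EC_\lambda)/\sqrt{\Var{EC_\lambda(\kayn)}} \ra 0$ in $L^2$ as $\lambda \toinf$ uniformly in large $n$, and a Slutsky argument transfers the CLT.

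The main obstacle is verifying {\localprop} (endogeny/replica symmetry) for the $\lambda$-diluted edge cover RDE. As the paper emphasizes, this problem-specific ingredient is typically the hardest part of any objective-method computation; once it is in hand, the sparse-to-mean-field passage described in the last paragraph should be comparatively routine.
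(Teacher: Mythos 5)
Your overall plan---dilute, apply the general CLT, and pass $\lambda \toinf$---is the right high-level shape, and it is the one used in the paper. However, the key technical claim in your third paragraph is false, and repairing it is where essentially all the work of the paper's proof resides.

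You assert that $\E[(EC - EC_\lambda)^2] \leq n\,\eta(\lambda)$ with $\eta(\lambda)\to 0$, uniformly in large $n$, so that one may do the CLT for $EC_\lambda$ at \emph{fixed} $\lambda$ and then send $\lambda\toinf$ afterwards. This cannot hold: at fixed $\lambda$, the $\lambda$-diluted optimum leaves a positive asymptotic \emph{fraction} of vertices uncovered (the ``fraction uncovered'' converges to a constant $c(\lambda)>0$), so $EC(\kayn) - EC_\lambda(\kayn)$ has mean of order $n$ (their normalized limits differ), and $\E[(EC-EC_\lambda)^2]$ is of order $n^2$, not $o(n)$. The gap between the two problems becomes negligible relative to $\sqrt{n}$ only once $\lambda$ is of order $\log n$ --- indeed the paper shows that for $\lambda_n = C_1\log n$ one has $EC_{\lambda_n}(\kayn)=EC(\kayn)$ with polynomially high probability (no isolated vertices in $\kayn(\lambda_n)$ forces equality). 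So $\lambda$ \emph{must} grow with $n$, and then a per-fixed-$\lambda$ CLT plus Slutsky is not enough: you need an explicit rate of convergence good enough to survive $\lambda_n\toinf$. That, in turn, requires a \emph{quantitative} bound on $\delta_k$ (the error in local approximation) with the $\lambda$-dependence made explicit and the decay rate bounded away from $1$ uniformly in $\lambda$. This is exactly the content of the paper's Proposition \ref{decay-of-corr}, $\delta_k(\lambda)\leq C\lambda\,\alpha(\lambda)^{k}$ with $\sup_{\lambda\geq\delta}\alpha(\lambda)<1$, obtained by iterating the operator $V_\lambda F(x)=\exp(-\int_0^{\lambda/2}F)\,e^{-x}$. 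Your sentence ``I would recast the existing objective-method analysis\ldots to extract an endogenous solution'' only promises qualitative endogeny, which gives convergence without a rate and is therefore not sufficient once $\lambda$ is allowed to grow.

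Two secondary omissions: you do not establish the variance lower bound $\liminf n^{-1}\Var{EC(\kayn)}>0$ (the paper adapts \cite{Ch2018}); and for optimal edge cover the natural recursion is not for $EC_\lambda(\geen)-EC_\lambda(\geen-v)$ but for $h_\lambda(v,\geen,S)=EC_\lambda(\geen,S)-EC_\lambda(\geen,S-\{v\})$, forcing one to use the full Theorem \ref{main-result} (with bespoke $LA_k^L, LA_k^U$ built from $h_\lambda$ and identity \eqref{h-lambda-pert-ident}) rather than the simpler Corollary \ref{cor-main-result} that sufficed for the matching problems.
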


The proof of this will be through what is essentially a truncation argument. Similar to the previous section, we will define a certain relaxed version of optimal edge cover, indexed by a parameter $\lambda > 0$, and denoted $EC_\lambda(\kayn)$. The relaxed problem is related to optimal edge cover by
\[ \lim_{\lambda \toinf} EC_\lambda(\kayn) = EC(\kayn). \]
Moreover, the relaxed problem is an optimization problem on a sparse Erd\H{o}s-R\'{e}nyi graph, so that the methods of this paper will apply to give a central limit theorem for $EC_\lambda(\kayn)$. Even more, we will have a rate of convergence, which with a little bit of work, can be shown to be robust enough to allow us to take $\lambda$ to infinity with $n$. This will then allow us to transfer the central limit theorem for $EC_\lambda(\kayn)$ to a central limit theorem for $EC(\kayn)$.

The relaxed problem was introduced by W\"{a}stlund \cite{Wast2009}. First take $\lambda > 0$. We may define the $\lambda$-diluted cost of a collection of edges (not necessarily an edge cover) as the sum of edge weights over all edges in the collection, plus $\lambda / 2$ times the number of vertices that are not incident to any edge of the collection. In effect, we are paying a penalty of $\lambda / 2$ for each un-covered vertex. The optimal $\lambda$-diluted edge cover is defined to be the collection of edges (again, not necessarily an edge cover) of minimal $\lambda$-diluted cost, and its cost is denoted $EC_\lambda(\kayn)$. Observe $EC_\lambda$ is actually just a function of $\kayn(\lambda)$, and thus is an optimization problem on a sparse Erd\H{o}s-R\'{e}nyi graph. W\"{a}stlund \cite{Wast2009} showed replica symmetry for $EC_\lambda$, which led to the proof of $EC_\lambda(\kayn) \stackrel{p}{\ra} \beta(\lambda)$, with $\beta(\lambda)$ explicitly characterized.

\subsection{The general result}\label{general-result-section}

We start by giving the definitions needed to state the general result. 

\begin{definition}
Given a weighted graph $\mbf{G}$, and a vertex $v$, let $\mbf{G} - v$ denote the weighted graph obtained by deleting $v$ and all edges incident to $v$. More generally, given a set of vertices $U$, let $\mbf{G} - U$ denote the weighted graph obtained by deleting all vertices in $U$, as well as all edges which are incident to a vertex of $U$.
\end{definition}

\begin{definition}
Given an integer $k \geq 0$, let $B_k(v, \mbf{G})$ denote the weighted graph obtained from the union of all paths in $\mbf{G}$ which start at $v$ and are of length at most $k$. We will call $v$ the root of $B_k(v, \mbf{G})$, even if $B_k(v, \mbf{G})$ is not a tree. We may think of $B_k(v, \mbf{G})$ as a (small) neighborhood of $v$. 
\end{definition}

We may think of $B_k(v, \mbf{G})$ as a (small) neighborhood of $v$. One key technical fact is that we can replace $B_k(v, \gengraph)$ by a limiting object, which we now define.

\begin{definition}
For $\gendeg > 0$, let $T(\infty, \gendeg)$ denote a Galton-Watson process with offspring distribution $\mathrm{Poisson}(\gendeg)$. For integer $k > 0$, let $T(k, \gendeg)$ denote the depth $k$ subtree of $T(\infty, \gendeg)$. Additionally, given a weight distribution $F_w$, let $\mbf{T}(\infty, \gendeg, F_w)$ denote $T(\infty, \gendeg)$, equipped with edge weights which are i.i.d. from $F_w$. Let $\mbf{T}(k, \gendeg, F_w)$ denote the weighted depth $k$ subtree of $\btree(\infty, \gendeg, F_w)$. We will typically denote the root of these trees by $\troot$.
\end{definition}

The point is that $B_k(v, \gengraph)$ is essentially $\btree_k$, in the following sense (see Section \ref{er-graph-facts} for precise statements).

\begin{definition}\label{nbd-iso}
Let $\beeg$ be a weighted graph, and let $v$ be a vertex of $\beeg$. Let $\btree$ be a rooted weighted tree. Let $k > 0$. We say $B_k(v, \beeg) \cong \btree$ if there exists a bijection $\varphi$ between the vertices of $B_k(v, \beeg)$ and the vertices of $\btree$, which maps $v$ to the root of $\btree$, and preserves all edges and edge weights. In other words, if $(u, u')$ is an edge of $B_k(v, \beeg)$ with weight $w$, then $(\varphi(u), \varphi(u'))$ is an edge of $\btree$ with weight $w$, and vice versa.
\end{definition}

In words, $B_k(v, \mbf{G}) \cong \btree$ if the two objects differ only by a vertex relabeling. We extend Definition \ref{nbd-iso} to pairs of neighborhoods which share the same root.

\begin{definition}\label{pair-tree-iso}
Let $\beeg, \beeg'$ be weighted graphs which share the same vertex set. Let $v$ be a vertex of $\beeg, \beeg'$. Let $\btree, \btree'$ be trees which share the same root. Let $k > 0$. We say that $(B_k(v, \beeg), B_k(v, \beeg')) \cong (\btree, \btree')$, if there exists a bijection $\varphi$ between the vertices of $B_k(v, \beeg), B_k(v, \beeg')$ and the vertices of $\btree, \btree'$, which maps $v$ to the root of $\btree, \btree'$, and preserves all edges and edge weights. In other words, if $(u, u')$ is an edge of $B_k(v, \beeg)$ with weight $w$, then $(\varphi(u), \varphi(u'))$ is an edge of $\btree$ with weight $w$, and vice versa. Similarly, if $(u, u')$ is an edge of $B_k(v, \beeg')$ with weight $w$, then $(\varphi(u), \varphi(u'))$ is an edge of $\btree'$ with weight $w$, and vice versa.
\end{definition}

For technical reasons, we will also need to work with the following objects.

\begin{definition}
Given $\gendeg > 0$, and a weight distribution $F_w$, define $\tilde{\btree}(\infty, \gendeg, F_w)$ as follows. Take $\btree, \btree' \stackrel{i.i.d.}{\sim} \btree(\infty, \gendeg, F_w)$. Let $\troot, \troot'$ denote the roots of $\btree, \btree'$ respectively. Construct $\tilde{\btree}(\infty, \gendeg, F_w)$ as the tree with root $\troot$, obtained by starting with $\btree$, and then adding an edge between $\troot, \troot'$ with edge weight distributed as $F_w$, independent of everything else. Let $\tilde{\btree}(k, \gendeg, F_w)$ be the depth $k$ subtree of $\tilde{\btree}(\infty, \gendeg, F_w)$.

Note that $\tilde{\btree}_k \stackrel{d}{=} \tilde{\btree}(k, \gendeg, F_w)$ may be constructed in the following manner. Take $\btree_k, \btree_{k-1}'$ independent, with $\btree_k \stackrel{d}{=} \btree(k, \gendeg, F_w)$, $\btree_{k-1}' \stackrel{d}{=} \btree(k-1, \gendeg, F_w)$, with roots $\troot, \troot'$ respectively. Let $\ell \sim F_w$ independent of everything else. Then define $\tilde{\btree}_k$ to be the tree with root $\troot$ constructed by connecting $\troot, \troot'$ with an edge of weight $\ell$. When $\tilde{\btree}_k$ is defined this way, we say that $\tilde{\btree}_k$ is constructed from $(\btree_k, \btree_{k-1}', \troot, \troot', \ell)$.
\end{definition}

\begin{remark}
Observe that the underlying graph of $\tilde{\btree}(\infty, \gendeg, F_w)$ is a Galton-Watson process, where the root has offspring distribution $1 + \mathrm{Poisson}(\gendeg)$, and every subsequent vertex has offspring distribution $\mathrm{Poisson}(\gendeg)$.
\end{remark}






\begin{definition}
Let $(W', B')$ be an i.i.d. copy of $(W, B)$. Given an edge $e$, define $\gengraph^e$ to be the weighted graph obtained by using $w_e', b_e'$ in place of $w_e, b_e$. Let $\Delta_e f := f(\geen) - f(\geen^e)$.
\end{definition}

We now present the key assumption that an optimization problem must satisfy for us to be able to prove a central limit theorem. Roughly speaking, it says that small perturbations of the problem must be able to be locally approximated. See Sections 3 or 5 of \cite{Ch2014} for the motivation for making such an assumption.

\begin{definition}\label{local-prop-def}
Let $f$ be a function on weighted graphs. We say that $(f, (\gengraph, n \geq 1))$ has {\bf \localprop} (``good local approximation") for $\gendeg, F_w$, if $np_n \ra \gendeg$, $d_{TV}(F_w^{(n)}, F_w) \ra 0$, and for each $k > 0$ there exist functions $LA^L_k, LA^U_k$, which take as input pairs of finite rooted weighted trees, such that the following conditions hold.
\begin{enumerate}
    \item[(A1)] For any edge $e = (v, u)$, if $B_k := B_k(v, \geen)$ and $B_k' := B_k(v, \geen^e)$ are trees, then
    \[ LA^L_k(B_k, B_k') \leq \Delta_e f \leq LA^U_k(B_k, B_k'). \]
    \item[(A2)] 
    For any edge $e = (v, u)$, if $(\btree, \btree')$ is such that we have 
    \[ (B_k, B_k') := (B_k(v, \geen), B_k(v, \geen^e)) \cong (\btree, \btree'), \]
    then
    \[ LA_k^L(\btree, \btree') = LA_k^L(B_k, B_k'), \] 
    \[ LA_k^U(\btree, \btree') = LA_k^U(B_k, B_k').\]
    \item[(A3)] 
    Let $\tilde{\btree}_k \stackrel{d}{=} \tilde{\btree}(k, F_w, \gendeg)$ be constructed from $(\btree_k, \btree_{k-1}', \troot, \troot', \ell)$. 
    Define
    \[ \begin{split}\delta_k := \max \bigg(&\E (LA^U_k(\tilde{\btree}_k, \btree_k) - LA^L_k(\tilde{\btree}_k, \btree_k))^2, \\ & \E (LA^U_k(\btree_k, \tilde{\btree}_k) - LA^L_k(\btree_k, \tilde{\btree}_k))^2 \bigg). \end{split}\]
    Then
    \[ \lim_{k \toinf} \delta_k = 0. \]
\end{enumerate}
\end{definition}


\begin{remark}
In words, {\localprop} ensures that when we perturb the weighted graph $\geen$ at a single edge $e$, the resulting change in $f$ may be approximated by $LA^L_k$. Then by (A1), the approximation error is at most $LA^U_k - LA^L_k$. In analyzing this term, (A2) allows us to replace neighborhoods of Erd\H{o}s-R\'{e}nyi graphs by Galton-Watson trees. Then by (A3), we may conclude that the approximation error goes to 0 as $k$ goes to infinity.

The construction of $LA^L_k, LA^U_k$ to fulfill (A1) usually proceeds by exploiting certain recursive properties of the given function $f$. Typically, (A2) will be trivial to check, because in constructing the local approximations, we will never use the vertex labels. The recursive distributional properties of Galton-Watson trees are usually used to verify (A3).
\end{remark}

\begin{remark}
As we will see, the previously mentioned properties of long-range independence and replica symmetry imply {\localprop} in the examples we consider. For instance, in the case of $\lambda$-diluted minimum matching, construction of $LA^L_k, LA^U_k$ was more or less done by W\"{a}stlund, except they were called $f_A^k, f_B^k$ (see \cite[Section 2.5]{Wast2012}). The verification of (A3) in this case is essentially \cite[Proposition 2.8]{Wast2012}. The same story should typically be true for combinatorial optimization problems on sparse Erd\H{o}s-R\'{e}nyi graphs for which the Objective method can be used to derive the limiting behavior of the mean.

As for the relation between {\localprop} and endogeny, first, as mentioned in the previous remark, the local approximations $LA^L_k, LA^U_k$ are usually constructed by a problem specific recursion. If this is the case, then this recursion can also be used to define a recursive tree process. The condition in (A3) that $\lim_{k \toinf} \delta_k = 0$ then essentially implies that the recursive tree process is endogenous (see \cite[Section 2]{AldBan2005} for definitions of recursive tree process and endogeny). Thus {\localprop} should typically imply endogeny (of an associated recursive tree process).

We do hesitate to claim that the properties mentioned here are always equivalent to each other, but in each of the examples considered in this paper, the various properties all follow by a single, central argument. Thus needless to say, these properties are all interrelated. We chose to assume {\localprop} in this paper (rather than any of the other properties) because the assumptions are stated in a way that is suitable for proving a central limit theorem. These assumptions seek to abstract out precisely what is needed to apply the generalized perturbative approach to Stein's method -- no more, no less.
\end{remark}

We now state the general result, which says that any optimization problem which has {\localprop}, with some additional regularity conditions, satisfies a central limit theorem.

\begin{theorem}\label{main-result}
Suppose $(f, (\geen, n \geq 1))$ satisfies {\localprop} for $\gendeg, F_w$. Suppose additionally that the following regularity condition is satisfied.
There is a function $H$ such that 
\beq\label{H-moment} J := \max\bigg(1, \sup_n \E H(w_e, w_e')^6\bigg) < \infty, \eeq
(here the the dependence on $n$ comes from $w_e, w_e'$, which are distributed like $F_w^{(n)}$), and for any $e = (v, u)$ and any $n$, we have
\beq\label{f-regularity} \abs{\Delta_e f} \leq 1(\max(b_e, b_e') = 1) H(w_e, w_e'). \eeq
Let $\sigma_n^2 := \Var{f(\gengraph)}$, $\gendeg_n := np_n$, 
\[ Z_n := \frac{f(\gengraph) - \E f(\gengraph)}{\sigma_n}, \]
and let $\Phi$ denote the standard normal cdf. There is a numerical constant $C_0$, such that with
\[\varep_k(n) := \frac{(2\gendeg + 3)^{k}}{n^{1/3}} + \frac{C_0 (\gendeg_n + 1)^{k}}{\min(\gendeg, 1)} \bigg(\abs{\gendeg_n - \gendeg} + d_{TV}(F_w^{(n)}, F_w) + \frac{\lambda^2}{2n}\bigg), \]
and 
\[ \rho_k(n) :=  \min\bigg(\frac{(\gendeg_n + C_0)^{2k + C_0}}{n}, 1\bigg), \]
we have for any $k, n > 0$,
\[ \begin{split}
&\sup_{t \in \R} \abs{\p(Z_n \leq t) - \Phi(t)} \leq \\
 &C_0 J^{1/4} \Bigg[\bigg(\frac{n}{\sigma_n^2}\bigg)^{1/2} \bigg( \delta_k^{1/8} + \varep_k(n)^{1/16} +
\rho_k(n)^{1/16} \bigg) +\bigg(\frac{n}{\sigma_n^2}\bigg)^{3/4} \frac{\gendeg_n^{1/2}}{n^{1/4}}\Bigg].
\end{split}\]
\end{theorem}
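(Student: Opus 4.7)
The approach is the generalized perturbative method for Stein's method, introduced by Chatterjee \cite{Ch2008, Ch2014}. The starting point is an abstract normal-approximation theorem of the following flavor: for $W = f(X_1, \ldots, X_N)$ with independent coordinates, one has a Kolmogorov bound controlled by (i) the variance of an auxiliary quantity $T$ built out of resampling derivatives $\Delta_i f$ and an independent copy, and (ii) certain high-moment sums $\sum_i \E \abs{\Delta_i f}^p$ of the edgewise derivatives. Taking the coordinates to be edges $e$ with $X_e = (w_e, b_e)$ and $W = f(\geen)$, the problem reduces to bounding these two families of quantities.

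The moment piece is direct: by (\ref{f-regularity}), $\abs{\Delta_e f}^p$ is supported on $\{\max(b_e, b_e') = 1\}$, an event of probability at most $2p_n$, so $\E \abs{\Delta_e f}^p \leq C p_n \E H^p \leq C p_n J^{p/6}$ by Jensen's inequality for any $p \leq 6$. Summing over the $\binom{n}{2}$ edges gives a bound scaling like $n^2 p_n J^{p/6} = n\gendeg_n J^{p/6}$, and after combining with the appropriate $\sigma_n^{-c}$ prefactor from the abstract theorem, this accounts for the $J^{1/4}(n/\sigma_n^2)^{3/4} \gendeg_n^{1/2} n^{-1/4}$ summand in the stated bound.

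The heart of the argument is controlling $\Var{T}$, which is where Property GLA enters. The strategy has three layers. \emph{Layer 1 (local approximation).} By (A1), for each edge $e = (v,u)$, replace $\Delta_e f$ by $LA_k$ applied to $(B_k(v, \geen), B_k(v, \geen^e))$, up to an $L^2$ error controlled using (A2)--(A3): these conditions identify the joint distribution of the neighborhood pair with $(\btree_k, \tilde{\btree}_k)$ or $(\tilde{\btree}_k, \btree_k)$ whenever the neighborhood is a tree, and then give $\delta_k^{1/2}$ control on the sandwich $LA_k^U - LA_k^L$. \emph{Layer 2 (single-vertex Galton--Watson coupling).} Replace $B_k(v, \geen)$ by a genuine Galton--Watson neighborhood using the standard local weak convergence coupling for sparse Erd\H{o}s-R\'{e}nyi graphs, presumably worked out in Section \ref{er-graph-facts}; the ingredients $\abs{\gendeg_n - \gendeg}$, $d_{TV}(F_w^{(n)}, F_w)$, and the geometric factor $(2\gendeg+3)^k$ in $\varep_k(n)$ reflect the distribution mismatch and the exponentially expanding depth-$k$ ball. \emph{Layer 3 (bi-local decorrelation).} For distinct edges $e, e'$, the local-approximation contributions to $T$ are independent whenever their $k$-balls are disjoint, and the probability of collision is at most $\rho_k(n) = (\gendeg_n + C_0)^{2k + C_0}/n$ by a union bound over vertices in a $k$-ball. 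Combining yields a bound on $\Var{T}$ polynomial in $\delta_k$, $\varep_k(n)$, $\rho_k(n)$; the fractional exponents $1/8$ and $1/16$ in the final inequality arise from repeated Cauchy--Schwarz, converting $L^2$ approximation errors into $L^1$ and separating the Bernoulli indicator $1(\max(b_e, b_e') = 1)$ from the weight factor $H$.

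The main obstacle, I expect, is the bi-local coupling in Layer 3: to control $\Var{T}$, one needs the pair of $k$-neighborhoods around two different edges to be simultaneously close to two \emph{independent} copies of $\tilde{\btree}_k$, not merely each marginally close to $\tilde{\btree}_k$. Establishing this requires combining the single-vertex coupling of Layer 2 with an exploration argument showing that, conditional on the two $k$-balls being vertex-disjoint, their joint law factorizes up to the usual Erd\H{o}s-R\'{e}nyi-to-Galton--Watson error. Keeping track of all three layers of approximation and aggregating their errors cleanly into $\delta_k$, $\varep_k(n)$, and $\rho_k(n)$ with the precise exponents appearing in the theorem is the main technical bookkeeping of the proof.
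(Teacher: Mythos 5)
Your proposal is correct and follows essentially the same route as the paper: Chatterjee's generalized perturbative lemma (here used in the form that bounds the Kolmogorov distance by covariances $c(e,e')$ of the discrepancy products plus a third-moment sum), the decomposition of $\Delta_e f$ into a local piece via (A1)--(A3) plus a remainder, a one-vertex Erd\H{o}s--R\'enyi/Galton--Watson coupling feeding into $\varep_k(n)$, and a joint/bi-local decorrelation argument for pairs of disjoint $k$-neighborhoods feeding into $\rho_k(n)$. The ``bi-local coupling'' obstacle you flag at the end is exactly what the paper's Lemmas \ref{nbd-joint-coupling} and \ref{nbd-ind-complicated} (together with the exploration-and-rerandomization argument in Lemma \ref{coupling-ind-lemma}) resolve.
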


\begin{remark}
To help parse the rate of convergence, note that by assumption, $\delta_k \ra 0$, $\lambda_n \ra \lambda$, and for all $k$, $\lim_{n \toinf} \varep_k(n) = 0$, $\lim_{n \toinf} \rho_k(n) = 0$. Thus to obtain convergence, one naturally will first take $n\toinf$, and then $k \toinf$. This will be successful as long as one is able to show that the variance $\sigma_n^2$ is at least of order $n$. This variance lower bound may in general be nontrivial to obtain. However, for various problems, one may use the general method introduced by Chatterjee \cite{Ch2018}. The other regularity conditions should be easier to verify.
\end{remark}

The following corollary gives sufficient conditions under which {\localprop} holds. It also simplifies the conclusion of the previous theorem, at the cost of no longer giving a rate of convergence.

\begin{cor}\label{cor-main-result}
Suppose we have $(f, (\geen, n \geq 1))$, such that $np_n \ra \gendeg$, $d_{TV}(F_w^{(n)}, F_w) \ra 0$. Suppose for each $k$ there exists functions $g^L_k, g^U_k$ on finite rooted weighted trees, such that when $B_k(v, \geen)$ is a tree, we have
\beq\label{add-vertex-bracket} g^L_k(B_k(v, \geen)) \leq f(\geen) - f(\geen - v) \leq g^U_k(B_k(v, \geen)). \eeq
Suppose moreover that if $B_k(v, \geen) \cong \btree$, then 
\beq\label{gk-label-invariant} g^{L}_k(\btree) = g^L_k(B_k(v, \geen)), ~g^U_k(\btree) = g^U_k(B_k(v, \geen)). \eeq
Additionally, let $\btree_k \stackrel{d}{=} \btree(k, \gendeg, F_w)$, $\tilde{\btree}_k \stackrel{d}{=} \tilde{\btree}(k, \gendeg, F_w)$, and suppose
\beq\label{gu-gl-conv-0} \lim_{k \toinf} \E (g^U_k(\btree_k) - g^L_k(\btree_k))^2 = 0, \eeq
\beq\label{gu-gl-exp-tree-conv-0} \lim_{k \toinf} \E (g^U_k(\tilde{\btree}_k) - g^L_k(\tilde{\btree}_k))^2 = 0. \eeq
Suppose also that the following regularity conditions are satisfied.
\begin{itemize}
    \item The variance is at least of order $n$:
    \[ \liminf_{n \toinf} n^{-1} \mathrm{Var}(f(\geen)) > 0. \]
    \item There exists a function $H$ such that
    \[ \sup_n \E H(w_e, w_e')^6 < \infty, \]
    and for any $e = (v, u)$ and any $n$, we have
    \[\abs{\Delta_e f} \leq 1(\max(b_e, b_e') = 1) H(w_e, w_e'). \]
\end{itemize}
Then
\[ \frac{f(\geen) - \E f(\geen)}{\sqrt{\Var{f(\geen)}}} \stackrel{d}{\lra} N(0, 1).\]
\end{cor}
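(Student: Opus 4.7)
The plan is to derive Property GLA from the single-vertex bracketing in the hypotheses, then invoke Theorem \ref{main-result} and take the iterated limit.

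The key observation enabling the reduction is that for any edge $e = (v, u)$, the graphs $\geen - v$ and $\geen^e - v$ coincide, since both result from deleting $v$ together with all edges incident to $v$ (and $e$ is such an edge). Hence
\[ \Delta_e f = \bigl(f(\geen) - f(\geen - v)\bigr) - \bigl(f(\geen^e) - f(\geen^e - v)\bigr). \]
When both $B_k(v, \geen)$ and $B_k(v, \geen^e)$ are trees, applying (\ref{add-vertex-bracket}) to each difference yields
\[ g^L_k(B_k(v, \geen)) - g^U_k(B_k(v, \geen^e)) \leq \Delta_e f \leq g^U_k(B_k(v, \geen)) - g^L_k(B_k(v, \geen^e)). \]
This motivates setting
\[ LA^L_k(\btree, \btree') := g^L_k(\btree) - g^U_k(\btree'), \qquad LA^U_k(\btree, \btree') := g^U_k(\btree) - g^L_k(\btree'). \]
Axiom (A1) of Definition \ref{local-prop-def} is then immediate, and (A2) follows instantly from the label-invariance (\ref{gk-label-invariant}) of $g^L_k, g^U_k$.

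For (A3), I would compute
\[ LA^U_k(\tilde{\btree}_k, \btree_k) - LA^L_k(\tilde{\btree}_k, \btree_k) = \bigl(g^U_k(\tilde{\btree}_k) - g^L_k(\tilde{\btree}_k)\bigr) + \bigl(g^U_k(\btree_k) - g^L_k(\btree_k)\bigr), \]
with the analogous identity when the arguments are swapped. Then $(a+b)^2 \leq 2a^2 + 2b^2$ combined with the hypotheses (\ref{gu-gl-conv-0}) and (\ref{gu-gl-exp-tree-conv-0}) gives $\delta_k \to 0$. At this point Property GLA is established; the remaining regularity hypotheses of Theorem \ref{main-result} are directly in the assumptions of the corollary.

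To conclude, I would apply Theorem \ref{main-result} and pass to the limit. The variance lower bound $\liminf_n n^{-1} \sigma_n^2 > 0$ makes $n/\sigma_n^2$ bounded, so the rate from Theorem \ref{main-result} reduces to a constant multiple of
\[ J^{1/4}\bigl(\delta_k^{1/8} + \varep_k(n)^{1/16} + \rho_k(n)^{1/16}\bigr) + C\, n^{-1/4}, \]
up to uniform constants. For each fixed $k$, the definitions of $\varep_k(n)$ and $\rho_k(n)$, together with $np_n \to \gendeg$ and $d_{TV}(F_w^{(n)}, F_w) \to 0$, give $\varep_k(n), \rho_k(n) \to 0$ as $n \to \infty$. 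Hence $\limsup_n \sup_{t \in \R} |\p(Z_n \leq t) - \Phi(t)| \leq C J^{1/4} \delta_k^{1/8}$, and sending $k \to \infty$ yields the desired weak convergence. The only conceptually non-routine point is the first step (the telescoping identity and the resulting construction of $LA^L_k, LA^U_k$); everything after that is bookkeeping on the quantitative bound.
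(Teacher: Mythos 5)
Your proof is correct and follows essentially the same approach as the paper's: same definitions of $LA^L_k, LA^U_k$, same verification of (A1)--(A3), same iterated limit after invoking Theorem \ref{main-result}. The only difference is that you make explicit the telescoping identity $\Delta_e f = (f(\geen) - f(\geen - v)) - (f(\geen^e) - f(\geen^e - v))$ via $\geen - v = \geen^e - v$, which the paper leaves implicit when it asserts that (A1) follows from \eqref{add-vertex-bracket}.
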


\begin{remark}
Theorem \ref{main-result} and Corollary \ref{cor-main-result} seek to hide away as many technical details involving Erd\H{o}s-R\'{e}nyi graphs as possible. So to prove a central limit theorem, one may work almost exclusively with Galton-Watson trees, which due to their recursive nature, are much nicer objects.
\end{remark}

\begin{remark}
To point out some relations with previous work, {\localprop} is somewhat reminiscent of the stabilization assumption (see e.g. \cite[Section 8.3]{Yuk2013}) that appears for functions of Euclidean point processes, in that both assumptions control the change in the function when the underlying graph is slightly perturbed. It should also be mentioned that the overarching idea of the proof of Theorem \ref{main-result} essentially already appears in \cite[Section 4]{Ch2014} (see also \cite{ChSen2013}). In another direction, {\localprop} is related to the idea that sums of local statistics of random graphs should be asymptotically Normal. This idea is not new, and has appeared for instance in the context of Euclidean point processes \cite[Section 3.4]{Ch2008}, as well as in the context of configuration models \cite{AY2018, BR2019}. 
\end{remark}

\section{Applications of Corollary \ref{cor-main-result}}\label{general-theorem-applications}

In this section, we will apply the simpler Corollary \ref{cor-main-result} to the first two combinatorial optimization problems listed in Section \ref{main-results}. In both cases, assumption \eqref{gk-label-invariant} will be clear from construction of the $g^L_k, g^U_k$.

For rooted weighted trees $\btree$, we will denote the root by $\varnothing$. For vertices $u \in \btree$, we will denote the set of children of $u$ by $\mc{C}(u)$. Additionally, for edges $(v, u)$ in $\mbf{T}$ with $u$ the child, we will denote the edge weight by $\ell_u$.

\subsection{Maximum weight matching}

In this problem, we have that $p_n = \lambda / n$, and the weight distribution is $\text{Exp}(1)$ for all $n$. The ideas of \cite{GNS2005} will allow us to verify {\localprop}. we detail them here with no claims of originality. 

\begin{proof}[Construction of $g^L_k, g^U_k$]
For $v \in \mbf{G}$, observe that we have the recursion
\[ M(\mbf{G}) = \max\bigg(M(\mbf{G} - v), \max_{u : (v, u) \in  \mbf{G}} w_{(v, u)}  + M(\mbf{G} - \{v, u\}) \bigg).\]
Defining $h(\mbf{G}, v) := M(\mbf{G}) - M(\mbf{G} - v)$, we thus have
\beq\label{max-weight-match-recurs} h(\mbf{G}, v) = \max\bigg(0, \max_{u : (v, u) \in \mbf{G}} w_{(v, u)} - h(\mbf{G} - v, u)\bigg). \eeq
We will use this recursion to define the local approximations. Given an integer $k > 0$, and a rooted weighted tree $\mbf{T}$ of depth at most $k$, define $h_k(\cdot~; \btree) : \btree \ra \R$ in the following manner. For all leaf vertices $u \in \btree$, set $h_k(u; \btree) := 0$. Then use \eqref{max-weight-match-recurs} to define $h_k$ at all other vertices. In other words, for non-leaf vertices $u \in \mbf{T}$, set
\[ h_k(u; \btree) := \max\bigg(0, \max_{u' \in \mc{C}(u)} \ell_{u'} - h_k(u'; \btree)\bigg).\]
One may verify by using induction that for any even $k$ such that $B_k(v, \geegamma)$ is a tree, we have
\[ h_k(v; B_k(v, \geegamma)) \leq h(\geegamma, v), \]
and for any odd $k$ such that $B_k(v, \geegamma)$ is a tree, we have
\[ h(\geegamma, v) \leq h_k(v; B_k(v, \geegamma)).\]
Thus for any odd $k$ such that $B_k(v ,\geegamma)$ is a tree, we have
\[ h_{k-1}(v; B_{k-1}(v, \geegamma)) \leq h(\geegamma, v) \leq h_k(v; B_k(v, \geegamma)). \]
Now to define $g^L_k, g^U_k$, let $i_L := 2 \floor{(k-1)/2}$, and $i_U := 2 \floor{(k-1)/2} + 1$.  In other words, $i_U$ is the largest odd number less than or equal to $k$, and $i_L = i_U - 1$.  This definition ensures that $i_L, i_U \leq k$, and thus we may set (when $B_k(v, \geegamma)$ is a tree) 
\[ g^L_k(B_k(v, \geegamma)) := h_{i_L}(v; B_{i_L}(v, \geegamma)), ~g^U_k(B_k(v, \geegamma)) := h_{i_U} (v; B_{i_U}(v, \geegamma)). \] 
With this definition, \eqref{add-vertex-bracket} is satisfied.
\end{proof}

\begin{proof}[Verification of \eqref{gu-gl-conv-0}, \eqref{gu-gl-exp-tree-conv-0}]
Let $\mbf{T}_\infty \stackrel{d}{=} \mbf{T}(\infty, \lambda, \text{Exp}(1))$, and let $\mbf{T}_k$ be the depth $k$ subtree of $\mbf{T}_\infty$. For brevity, we write $h_k(\troot)$ instead of $h_k(\troot; \btree_k)$. 
To verify \eqref{gu-gl-conv-0}, it suffices to show
\beq\label{max-match-hb-ha} \lim_{r \toinf} \E (h_{2r+1}(\troot) - h_{2r}(\troot))^2 = 0.\eeq
With this coupling of the trees $(\btree_k, k \geq 1)$, one may verify that 
\beq\label{max-match-h-k-monotone} h_{2r+1}(\troot) \text{ is non-increasing in $r$ }, ~~ h_{2r}(\troot) \text{ is non-decreasing in $r$. } \eeq
Observe also that for all $r$,
\[ h_{2r}(\troot) \leq h_{2r+1}(\troot). \]
Defining $h^U := \lim_{r \toinf} h_{2r+1}(\troot)$, $h^L:= \lim_{r \toinf} h_{2r}(\troot)$, we thus have that
\[ h_{2r+1}(\troot) - h_{2r}(\troot) \downarrow h^U - h^L. \]
Note
\[0 \leq h_{2r+1}(\troot) - h_{2r}(\troot) \leq h_{2r+1}(\troot) \leq h_1(\troot) \leq \max_{u \in \mc{C}(\troot)} \ell_u, \]
and the quantity on the right hand side has finite second moment. Thus by dominated convergence, to verify \eqref{max-match-hb-ha}, it suffices to show that $h^U - h^L = 0$ a.s. As $h^L \leq h^U$, the following lemma suffices.

\begin{lemma}
$\E h^L = \E h^U$.
\end{lemma}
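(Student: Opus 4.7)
The plan is to exploit the specific structure of $\mathrm{Poi}(\lambda)$ offspring and $\mathrm{Exp}(1)$ edge weights to prove the stronger statement that $h^L \stackrel{d}{=} h^U$, from which the equality of expectations is immediate. First I establish the recursive distributional equations. Since for any child $u_i$ of the root we have $h_{2r+1}(u_i; \btree_{2r+1}) = h_{2r}(\troot; T_{u_i})$, where $T_{u_i}$ denotes the depth-$2r$ subtree of $\btree_{2r+1}$ rooted at $u_i$, and since the $T_{u_i}$ are i.i.d.\ copies of $\btree_{2r}$, passing to the limit in $r$ yields
\[ h^U \stackrel{d}{=} \max\bigg(0, \max_{i=1}^N (\ell_i - X^L_i)\bigg), \qquad h^L \stackrel{d}{=} \max\bigg(0, \max_{i=1}^N (\ell_i - X^U_i)\bigg), \]
where $N \sim \mathrm{Poi}(\lambda)$, $\ell_i \sim \mathrm{Exp}(1)$ i.i.d., and $X^L_i, X^U_i$ are i.i.d.\ copies of $h^L, h^U$ respectively, all mutually independent.

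Next let $\psi^L := \E e^{-h^L}$ and $\psi^U := \E e^{-h^U}$ (both in $(0,1]$). Conditioning on $N$ in the first RDE and computing $\mathbb{P}(\ell - X^L \leq t) = 1 - e^{-t}\psi^L$ for $t \geq 0$ (by independence and the memoryless property), the Poisson pgf $\E s^N = e^{\lambda(s-1)}$ gives the explicit cdf
\[ F_U(t) := \mathbb{P}(h^U \leq t) = \exp(-\lambda e^{-t}\psi^L), \quad t \geq 0, \]
which carries an atom of mass $e^{-\lambda\psi^L}$ at $t = 0$. Computing $\psi^U = \int_{[0,\infty)} e^{-t}\, dF_U(t)$ and substituting $u = \lambda\psi^L e^{-t}$ reduces the remaining integral to $\int_0^{\lambda\psi^L} u e^{-u}\, du = 1 - (1+\lambda\psi^L)e^{-\lambda\psi^L}$, leading after simplification to the coupled identity
\[ \lambda\psi^L\psi^U = 1 - e^{-\lambda\psi^L}. \]
The symmetric argument applied to the second RDE yields $\lambda\psi^L\psi^U = 1 - e^{-\lambda\psi^U}$, so $\psi^L = \psi^U$.

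Finally, once $\psi^L = \psi^U$, both cdfs $F_U(t) = \exp(-\lambda e^{-t}\psi^L)$ and $F_L(t) = \exp(-\lambda e^{-t}\psi^U)$ are the same explicit function of $t$, so $h^L \stackrel{d}{=} h^U$; combined with the pointwise coupling $h^L \leq h^U$ a.s.\ from \eqref{max-match-h-k-monotone}, this in fact gives $h^L = h^U$ a.s., and in particular $\E h^L = \E h^U$. The only real obstacle is the derivation of the coupled identity: it depends on $\mathrm{Exp}(1)$ edge weights, whose memorylessness produces the clean factorization $\mathbb{P}(\ell \leq t + X^L) = 1 - e^{-t}\E e^{-X^L}$, and on the $\mathrm{Poi}(\lambda)$ offspring whose pgf is exponential. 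For more general edge-weight or offspring distributions a more abstract fixed-point uniqueness argument (e.g.\ a contraction on distributions) would be required instead.
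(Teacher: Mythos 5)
Your proof is correct and takes a genuinely different route. The paper disposes of this lemma in one line by citing Theorem~3 and Proposition~1 of Gamarnik--Nowicki--Swirszcz (GNS2005), i.e.\ it outsources the entire ``long-range independence'' content to the reference: $h_k(\troot)$ converges in distribution, hence the two subsequential limits $h^L, h^U$ have the same law. What you do instead is re-derive that content from scratch in the Poisson$(\lambda)$/Exp$(1)$ case. You pass the tree recursion to the limit to obtain the (in fact a.s.) recursive distributional equations $h^U = \max\bigl(0,\max_i(\ell_i - h^L_i)\bigr)$ and $h^L = \max\bigl(0,\max_i(\ell_i - h^U_i)\bigr)$; then you observe that, by the memorylessness of Exp$(1)$, the cdf of $\ell - X$ for $X\geq 0$ is $1 - e^{-t}\E e^{-X}$, so the law of each limit is determined by a single scalar functional of the other, and the Poisson pgf makes that functional equation explicit. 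The coupled identity $\lambda\psi^L\psi^U = 1 - e^{-\lambda\psi^L} = 1 - e^{-\lambda\psi^U}$ forces $\psi^L = \psi^U$ by strict monotonicity of $x\mapsto 1-e^{-\lambda x}$, whence the two cdfs coincide and (with the monotone coupling $h^L\leq h^U$) one even gets $h^L = h^U$ a.s. This is clean and self-contained, and it buys you a stronger conclusion (almost sure rather than merely distributional equality of the limits) and an explicit characterization of the limiting law. What it costs is generality: as you note, the argument is tailored to Exp$(1)$ weights (memorylessness) and Poisson offspring (exponential pgf); the paper's citation-based route, via GNS2005, works for the class of weight distributions treated there and is the ``standard'' way this hypothesis is verified for other problems in the paper. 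Both are valid proofs of the lemma.
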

\begin{proof}
It follows by Theorem 3 and Proposition 1 of \cite{GNS2005} that $h_k(\varnothing) \stackrel{d}{\lra} X_*$, for some $X_*$. This implies $h^L \stackrel{d}{=} h^U$, and thus $\E h^L = \E h^U$.
\end{proof}
\begin{remark}
In a sense, everything before this lemma is routine, while the assertion that $h^L \stackrel{d}{=} h^U$ is nontrivial. This is one of the major results of \cite{GNS2005}, and it is essentially this assertion that is refered to as ``long-range independence" by Gamarnik et al.
\end{remark}


Once we've verified \eqref{gu-gl-conv-0}, \eqref{gu-gl-exp-tree-conv-0} follows easily. Let $\tilde{\btree}_k$ be constructed from $(\btree_k, \btree_{k-1}', \troot, \troot', \ell)$. Moreover, we may assume that $\tilde{\btree}_k, \tilde{\btree}_{k+1}$ are coupled so that $\tilde{\btree}_k$ is the depth $k$ subtree of $\tilde{\btree}_{k+1}$. It then suffices to show
\[ \lim_{r \toinf} \E (h_{2r+1}(\troot; \tilde{\btree}_{2r+1}) - h_{2r}(\troot; \tilde{\btree}_{2r}))^2 = 0. \]
Observe
\[ h_{2r+1}(\troot; \tilde{\btree}_{2r+1}) = \max\bigg(h_{2r+1}(\troot; \btree_{2r+1}), \ell - h_{2r}(\troot'; \btree'_{2r})\bigg), \]
and
\[ h_{2r}(\troot; \tilde{\btree}_{2r}) = \max\bigg(h_{2r}(\troot; \btree_{2r}), \ell - h_{2r-1}(\troot'; \btree'_{2r-1}) \bigg).\]
Letting $X_r := h_{2r+1}(\troot; \btree_{2r+1}) - h_{2r}(\troot; \btree_{2r})$, $X_r' := h_{2r-1}(\troot'; \btree'_{2r-1}) - h_{2r}(\troot'; \btree'_{2r})$, we have
\[ 0 \leq h_{2r+1}(\troot; \tilde{\btree}_{2r+1}) - h_{2r}(\troot; \tilde{\btree}_{2r}) \leq X_r + X_r', \]
and thus
\[ \bigg(h_{2r+1}(\troot; \tilde{\btree}_{2r+1}) - h_{2r}(\troot; \tilde{\btree}_{2r})\bigg)^2 \leq 2 X_r^2 + 2 (X_r')^2. \]
We've already shown $\E X_r^2 \ra 0$, and a small modification of the proof also shows $\E (X_r')^2 \ra 0$, and thus \eqref{gu-gl-exp-tree-conv-0} is verified.
\end{proof}

With {\localprop} established, we proceed to verify the regularity conditions of Corollary \ref{cor-main-result}. Fix $e = (v, u)$. To determine the function $H$, by splitting into the cases $b_e = 0, 1$, $b_e' = 0, 1$, we may obtain
\[ \abs{M(\geen) - M(\geen^e)} \leq 1(\max(b_e, b_e') = 1) \max(w_e, w_e'). \]
Thus we may take $H(w_e, w_e') := \max(w_e, w_e')$. As $w_e, w_e' \stackrel{i.i.d.}{\sim} \mathrm{Exp}(1)$, clearly $\E H(w_e, w_e')^6 < \infty$.


The application of Corollary \eqref{cor-main-result} to prove Theorem \eqref{max-weight-match-normal} will now be complete as soon as we show the following variance lower bound.

\begin{lemma}\label{max-match-var-lower-bd}
We have
\[ \liminf_{n \toinf} n^{-1} \Var{M(\geen)} > 0.\]
\end{lemma}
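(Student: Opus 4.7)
The plan is to prove the variance lower bound by exhibiting an independent, order-$n$ contribution to $M(\geen)$ coming from the weights of the \emph{isolated edges} of $\geen$, that is, edges $\{v, u\}$ whose two endpoints form a connected component of size exactly two. On any such component the maximum weight matching is forced to include the unique edge (since all edge weights are positive a.s.), so its weight enters $M(\geen)$ as an additive, independent contribution.

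Concretely, let $I = I(\geen)$ denote the (random) set of unordered pairs $\{v, u\} \subseteq [n]$ forming an isolated edge of $\geen$, and let $V(I)$ denote the set of vertices covered by $I$. For any fixed pair $\{v, u\}$,
\[ \p(\{v, u\} \in I) = p_n (1 - p_n)^{2(n - 2)}, \]
so
\[ \E |I| = \binom{n}{2} p_n (1 - p_n)^{2(n - 2)} \sim \frac{\lambda}{2} e^{-2\lambda} \cdot n \quad \text{as } n \toinf. \]
Since the maximum weight matching decomposes across connected components,
\[ M(\geen) = \sum_{\{v, u\} \in I} w_{(v, u)} + M(\geen - V(I)). \]

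The final step is to apply the law of total variance after conditioning on the $\sigma$-algebra $\mathcal{F}$ generated by $B$ together with $\{w_e : e \notin I\}$, which is well-defined because $I$ is measurable with respect to $B$. The weights $\{w_{(v, u)} : \{v, u\} \in I\}$ are i.i.d. $\mathrm{Exp}(1)$ and independent of $\mathcal{F}$, so
\[ \Var{M(\geen) \mid \mathcal{F}} = \sum_{\{v, u\} \in I} \Var{w_{(v, u)}} = |I|, \]
and therefore $\Var{M(\geen)} \geq \E \Var{M(\geen) \mid \mathcal{F}} = \E |I|$, which is of order $n$ by the asymptotic above. I do not anticipate a serious obstacle here: the argument is essentially an instance of the independent-local-contribution idea underlying Chatterjee's general variance lower bound method, and the only care required is in the conditioning, which is handled by the fact that $I$ is determined by the graph structure $B$ while the edge weights are i.i.d. and independent of $B$.
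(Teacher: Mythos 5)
Your proof is correct, and it takes a genuinely different route from the paper. The paper invokes Chatterjee's general variance lower bound framework (\cite{Ch2018}): it rescales the edge weights by $1/(1-\varepsilon_n)$ with $\varepsilon_n = \alpha/\sqrt{n}$, bounds $d_{TV}$ between the original and perturbed laws via Corollary~1.8 of \cite{Ch2018}, and then invokes the law of large numbers $M_n/n \stackrel{p}{\ra} \beta(\lambda) > 0$ (Theorem~3 of \cite{GNS2005}) to show that the rescaling shifts $M_n$ by an amount of order $\sqrt{n}$ with high probability; combining these via Lemma~1.2 of \cite{Ch2018} rules out $M_n$ concentrating on an interval of length $o(\sqrt{n})$. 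Your argument is entirely elementary and self-contained: isolated edges form forced, independent contributions to the maximum weight matching, and the law of total variance conditioned on $\sigma(B, \{w_e : e \notin I\})$ immediately yields $\Var{M(\geen)} \geq \E |I| \sim \tfrac{\lambda}{2} e^{-2\lambda} n$. The measure-theoretic point about conditioning on a randomly-indexed subcollection of weights is fine precisely because $I$ is $\sigma(B)$-measurable and $W$ is independent of $B$. What your version buys is simplicity and independence from the GNS LLN and from \cite{Ch2018} altogether; what you give up is generality --- the ``isolated component'' trick depends on the problem admitting obvious independent local contributions, and wouldn't carry over verbatim to, say, $\lambda$-diluted minimum matching or optimal edge cover, where the paper reuses essentially the same Chatterjee-style argument. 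For the statement at hand both approaches are sound.
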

\begin{proof}
We use the general framework of \cite{Ch2018}. 
For brevity, let $M_n := M(\geen)$. As observed in \cite{Ch2018}, it suffices to find constants $c_1, c_2 > 0$ such that for large enough $n$, for $b - a \leq c_1 \sqrt{n}$, we have
\[ \p(a \leq M_n \leq b) \leq 1 - c_2. \]

To find $c_1, c_2$, first observe that conditional on the underlying graph $G_n$, the law of $\geen$ is some structured collection of i.i.d. $\mathrm{Exp}(1)$ random variables, call them $w_1, \ldots, w_{E_n}$, where $E_n$ is the number of edges in $G_n$. For $\alpha > 0$ to be chosen later, set $\varep := \varep_n := \alpha n^{-1/2}$, and $w_i' := w_i / (1 - \varep)$, $1 \leq i \leq E_n$. Let $M_n'$ be the maximum weight matching of $G_n$ with the edge weights $w_1', \ldots, w_{E_n}'$. Lemma 1.2 of \cite{Ch2018} implies that for $-\infty < a \leq b < \infty$, we have
\beq\label{prob-a-b-bd} \p(a \leq M_n \leq b) \leq \frac{1}{2}(1 + \p(\abs{M_n - M_n'} \leq b - a) + d_{TV}(\mc{L}_{M_n}, \mc{L}_{M_n'})), \eeq
where $d_{TV}(\cdot, \cdot)$ is total variation distance, and $\mc{L}_{M_n}, \mc{L}_{M_n'}$ are the laws of $M_n$, $M_n'$, respectively. Let $d_{TV}(\cdot, \cdot ~|~ G_n)$ denote total variation distance conditional on $G_n$. Then it follows by Corollary 1.8 of \cite{Ch2018} that
\[ d_{TV}(\mc{L}_{M_n}, \mc{L}_{M_n'} ~|~ G_n) \leq C(E_n \alpha^2 / n)^{1/2} = C (E_n / n)^{1/2}\alpha.\]
Thus
\[ d_{TV}(\mc{L}_{M_n}, \mc{L}_{M_n'}) \leq \E d_{TV}(\mc{L}_{M_n}, \mc{L}_{M_n'} ~|~ G_n) \leq C \alpha \E (E_n / n)^{1/2} \leq C \sqrt{\lambda} \alpha,  \]
where the final inequality follows by noting $E_n \sim \text{Binomial}(n(n-1) / 2, \lambda / n)$. 

Observe now that $M_n' = M_n / (1-\varep)$, and thus $\abs{M_n - M_n'} = M_n \varep / (1 -\varep)$. By Theorem 3 of \cite{GNS2005}, we have $M_n / n \stackrel{p}{\ra} \beta(\lambda) > 0$. In particular, for some $c_1 > 0$ small enough, we have
\[ \p(M_n \varep / (1 - \varep) \leq c_1 \sqrt{n}) \leq \p(M_n / n \leq c_1 / \alpha) \ra 0.\]
We now choose $\alpha$ small so that $d_{TV}(\mc{L}_{M_n}, \mc{L}_{M_n'}) \leq 1/2$ (say), and then we choose $c_1$ small depending on $\alpha$ so that the above holds. Now by \eqref{prob-a-b-bd}, we have that for large enough $n$, for any $b - a \leq c_1 \sqrt{n}$, 
\[ \p(a \leq M_n \leq b) \leq \frac{1}{2} (1 + 1/4 + 1/2) = 7/8. \]
As detailed at the beginning of the proof, this implies the desired variance lower bound.
\end{proof}

\subsection{\texorpdfstring{$\lambda$-diluted minimum matching}{}}

With $\lambda$ implicit, let $\geen := \kayn(\lambda)$. Recall that $p_n = 1 - e^{-\lambda / n}$, and $F_w^{(n)}$ is the distribution of $n \text{Exp}(1)$, conditioned to lie in $[0, \lambda]$. We have $n p_n \ra \lambda$ and $d_{TV}(F_w^{(n)}, \mathrm{Unif}[0, \lambda]) \ra 0$. To verify {\localprop}, we follow the ideas of  \cite{ParWast2017, Wast2009, Wast2012}, with no claims of originality.

\begin{proof}[Verification of {\localprop}]
For $v \in \geen$, observe
\[ M_\lambda(\geen) = \min \bigg(\frac{\lambda}{2} + M_\lambda(\geen - v), \min_{u : (v, u) \in \geen} w_{(v, u)} + M_\lambda(\geen - \{v, u\})\bigg). \]
Defining $h_\lambda(\mbf{G}, v) := M_\lambda(\mbf{G}) - M_\lambda(\mbf{G} - v)$, we have
\beq\label{lambda-min-match-recurs} h_\lambda(\geen, v) = \min_{u : (v, u) \in \geen} \bigg(\frac{\lambda}{2}, w_{(v, u)} - h(\geen - \{v, u\})\bigg).\eeq
Note as $w_e \in [0, \lambda]$ for all edges $e$, we have that $h_\lambda \in [-\lambda / 2, \lambda / 2]$. Much as for maximum weight matching, we can use the recursion \eqref{lambda-min-match-recurs} to define $g^L_k, g^U_k$. And again, the key step in verifying \eqref{gu-gl-conv-0}, \eqref{gu-gl-exp-tree-conv-0} is showing that with $\btree_k \stackrel{d}{=} \btree(k, \lambda, \mathrm{Unif}[0 ,\lambda])$, we have that $g^L_k(\btree_k), g^U_k(\btree_k)$ converge in distribution to the same limit. It is essentially this condition that W\"{a}stlund calls ``replica symmetry", and it is given by Theorem 3.3 of \cite{ParWast2017} (and in a more general setting in \cite{Wast2009, Wast2012}).
\end{proof}

To verify the regularity conditions of Corollary \ref{cor-main-result}, first note $M_\lambda(\geen) - M_\lambda(\geen^e) = h_\lambda(\geen, v) - h_\lambda(\geen^e, v)$, and recall $h_\lambda(\geen, v) \in [-\lambda / 2, \lambda / 2]$. Thus we may take $H(w_e, w_e') = \lambda$. So really the only thing that needs proving is the variance lower bound.

\begin{lemma}
For fixed $\lambda > 0$, we have
\[\liminf_{n \toinf} n^{-1} \Var{M_\lambda(\geen)} > 0. \]
\end{lemma}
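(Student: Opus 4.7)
The plan is to mirror the variance lower bound proof for maximum weight matching (Lemma~\ref{max-match-var-lower-bd}), applying Chatterjee's framework from \cite{Ch2018}. Write $M_\lambda := M_\lambda(\geen)$. By that framework, it suffices to exhibit constants $c_1, c_2 > 0$ so that, for all $n$ large and all intervals $[a,b]$ with $b - a \leq c_1 \sqrt{n}$, $\p(a \leq M_\lambda \leq b) \leq 1 - c_2$.

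Conditioning on the underlying Erd\H{o}s-R\'enyi graph $G_n$ makes the edge weights $(w_e)$ i.i.d.\ from $F_w^{(n)}$. I would introduce the scaling perturbation $w_e' := w_e/(1-\varep)$ with $\varep := \varep_n := \alpha n^{-1/2}$, and let $M_\lambda'$ denote $M_\lambda$ evaluated at the perturbed weights. A direct calculation yields the identity
\[ M_\lambda' = \frac{1}{1-\varep}\, M_{\tilde\lambda}(\geen), \qquad \tilde\lambda := \lambda(1-\varep). \]
Lemma 1.2 of \cite{Ch2018} then gives
\[ \p(a \leq M_\lambda \leq b) \leq \tfrac{1}{2}\bigl(1 + \p(|M_\lambda - M_\lambda'| \leq b-a) + d_{TV}(\mc{L}_{M_\lambda}, \mc{L}_{M_\lambda'})\bigr), \]
and Corollary 1.8 of \cite{Ch2018}, invoked as in Lemma~\ref{max-match-var-lower-bd}, bounds the conditional total variation by $C\alpha \sqrt{E_n/n}$, hence by $O(\alpha)$ unconditionally (since $E_n/n \stackrel{p}{\ra} \lambda / 2$). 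Choosing $\alpha$ small enough makes this at most $1/4$.

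It remains to show $|M_\lambda' - M_\lambda| \geq c_1 \sqrt{n}$ with probability close to $1$. The map $w \mapsto M_\lambda(w)$ is a minimum of linear functions of $w$, hence concave, with the indicator $(1_{e \in \mu^*(w)})_e$ of the optimal matching serving as a supergradient at $w$. Applying the supergradient inequality at both $w$ and $w'$, and noting that the optimal matching of $M_\lambda(w')$ coincides with the optimal matching $\mu^*_{\tilde\lambda}(w)$ for the $\tilde\lambda$-diluted problem on the original weights, I obtain the sandwich
\[ \frac{\varep\, W_{\tilde\lambda}}{1-\varep} \;\leq\; M_\lambda' - M_\lambda \;\leq\; \frac{\varep\, W_\lambda}{1-\varep}, \]
where $W_\mu := \sum_{e \in \mu^*_\mu(w)} w_e$ denotes the total weight of the optimal $\mu$-diluted matching. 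Since $\varep = \alpha n^{-1/2}$, the task reduces to showing that $W_{\tilde\lambda}/n$ is bounded below by a positive constant with high probability.

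The main obstacle is this lower bound on $W_{\tilde\lambda}/n$. Writing $W_{\tilde\lambda} = M_{\tilde\lambda} - (\tilde\lambda/2)(n - 2|\mu^*_{\tilde\lambda}|)$, the LLN $M_{\tilde\lambda}/n \stackrel{p}{\ra} \beta(\tilde\lambda)$ from \cite{Wast2012, ParWast2017} (made uniform in $\tilde\lambda$ close to $\lambda$ via the bound $|M_\lambda - M_{\tilde\lambda}| \leq \lambda\varep n/2$), combined with an analogous LLN $|\mu^*_{\tilde\lambda}|/n \stackrel{p}{\ra} m^*(\lambda) \in (0, 1/2)$ for the matching size, would give $W_{\tilde\lambda}/n \stackrel{p}{\ra} \beta(\lambda) - \lambda\beta'(\lambda)$, interpreted as the asymptotic average edge-weight per vertex in the optimal matching, which is strictly positive. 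The matching-size LLN is the crux; I expect it to follow from the same endogeny/replica-symmetry machinery of W\"astlund applied to the statistic $|\mu|/n$ in place of $M_\lambda/n$, while the positivity of the limit is witnessed concretely by the greedy maximum matching of the subgraph of edges with $w_e \leq \tilde\lambda/2$, which is itself an Erd\H{o}s-R\'enyi graph of positive density and thus has matching size $\Omega(n)$.
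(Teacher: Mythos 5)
Your proposed proof has a genuine gap at the total variation step, and it is not a small one. You condition on the underlying graph $G_n$, so that the edge weights $(w_e)$ become i.i.d.\ from $F_w^{(n)}$, a distribution supported on $[0,\lambda]$, and then apply the multiplicative rescaling $w_e' := w_e/(1-\varep)$. This rescaling pushes mass outside $[0,\lambda]$: the law of $w_e'$ assigns mass of order $\varep$ to $(\lambda,\lambda/(1-\varep)]$, where $F_w^{(n)}$ assigns none. Unlike for true exponentials (the setting of Corollary 1.8 of \cite{Ch2018}, where multiplicative scaling is a pure rate change with $O(\varep^2)$ relative entropy per coordinate), this support mismatch is a non-cancelling contribution to total variation that accumulates additively across coordinates. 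With $\varep = \alpha n^{-1/2}$ and $E_n \approx n\lambda/2$ edges in $G_n$, the conditional total variation is at least of order $1 - (1-\varep)^{E_n} \to 1$, not $O(\alpha)$. The bound
\[
\p(a \leq M_\lambda \leq b) \leq \tfrac{1}{2}\bigl(1 + \p(|M_\lambda - M_\lambda'| \leq b-a) + d_{TV}\bigr)
\]
then becomes vacuous. In short, Corollary 1.8 of \cite{Ch2018} simply does not apply to the conditioned (support-bounded) weight law; invoking it is the error.

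This is precisely why the paper does not just port the maximum-weight-matching argument of Lemma~\ref{max-match-var-lower-bd}. Instead it adapts the proof of Theorem 2.9 of \cite{Ch2018}: it works with all $\binom{n}{2}$ exponential weights of $\kayn$ (rescaled to $\mathrm{Exp}(1)$), and uses the nonlinear, support-preserving, downward perturbation $a_{ij}' + \alpha n^{-1}\phi(a_{ij}') = a_{ij}$, for which the $O(\alpha)$ total-variation bound is established in \cite{Ch2018}. Because $a'_{ij} \leq a_{ij}$, one gets $M_n - M_n' \geq 0$, and a lower bound is obtained by summing the weight decrease over matched edges incident to a set $D_n$ of vertices whose nearest-neighbor weight exceeds a threshold $\beta/n$; the two auxiliary lemmas give $|D_n|/n \to e^{-\beta}$ and $|B_n^c|/n \to F_\lambda(\lambda/2) < 1$, so that $|D_n\cap B_n| = \Omega(n)$. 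Your supergradient sandwich $\tfrac{\varep}{1-\varep}W_{\tilde\lambda} \leq M_\lambda' - M_\lambda \leq \tfrac{\varep}{1-\varep}W_\lambda$ is correct algebra and an elegant way to see the fluctuation size, but it hinges on the clean identity $M_\lambda' = \tfrac{1}{1-\varep}M_{\tilde\lambda}$, which itself depends on using a linear rescaling --- exactly the perturbation whose total variation blows up here. Swapping in a support-preserving perturbation dissolves that identity, and you are led back to the paper's argument. (Separately, the proposed proof of the final step, $W_{\tilde\lambda}/n$ bounded below via a law of large numbers for the matching size $|\mu^*_{\tilde\lambda}|/n$, is only sketched; but that is a secondary concern next to the total variation gap.)
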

\begin{proof}
The proof is a small adaptation of the proof of Theorem 2.9 of \cite{Ch2018}. To follow that proof more closely, we first do some rescaling. Let $\tilde{\mbf{G}}_n$ be $\geen$ with all edge weights divided by $n$, so that the edge weights of $\tilde{\mbf{G}}_n$ are $\text{Exp}(1)$. We then consider $M_{\lambda / n}(\tilde{\mbf{G}}_n)$, which is equal to $n^{-1} M_\lambda(\geen)$. It suffices to show
\[ \liminf_{n \toinf} n \Var{M_{\lambda / n}(\tilde{\mbf{G}}_n)} > 0. \]
For brevity, denote $M_n := M_{\lambda / n}(\tilde{\mbf{G}}_n)$. As mentioned in the proof of Lemma \ref{max-match-var-lower-bd}, it suffices to find constants $c_1, c_2 > 0$ such that for large enough $n$, for $b - a \leq c_1 / \sqrt{n}$, we have
\[\p(a \leq M_n \leq b) \leq 1 - c_2. \]
Towards this end, define the function $\phi : [0, \infty) \ra [0, \infty)$,
\[
\phi(x) = 
\begin{cases} 
\sqrt{n} x & \text{if } 0 \leq x \leq 1/n \\ 
x + 1 / \sqrt{n} - 1 / n & \text{if } x > 1/n. \\ 
\end{cases} 
\]
Let $\alpha > 0$ be chosen later. Let $A = (a_{ij}, 1 \leq i < j \leq n)$ be the edge weights of $\tilde{\mbf{G}}_n$, and define $A' = (a_{ij}', 1 \leq i < j \leq n)$, where $a_{ij}'$ is such that
\[ a_{ij}' + \alpha n^{-1} \phi(a_{ij}') = a_{ij}. \]
Note as the map $x \mapsto x + \alpha n^{-1} \phi(x)$ is continuous and strictly increasing, $a_{ij}'$ exists and is unique. The proof of Theorem 2.9 of \cite{Ch2018} shows that
\[ d_{TV}(\mc{L}_{A}, \mc{L}_{A'}) \leq C \alpha. \]
Defining $M_n'$ to be the cost of the $\lambda/n$-diluted minimum matching with the weights $A'$, we thus have
\[ d_{TV}(\mc{L}_{M_n}, \mc{L}_{M_n'}) \leq C \alpha.\]
Now by Lemma 1.2 of \cite{Ch2018}, for all $-\infty < a \leq b < \infty$, we have
\beq\label{min-match-interv-bd} \p(a \leq M_n \leq b) \leq \frac{1}{2}(1 + \p(\abs{M_n - M_n'} \leq b - a) + C \alpha ), \eeq
so our goal now is to bound $\p(\abs{M_n - M_n'} \leq b - a)$.

Observe that $a_{ij}' \leq a_{ij}$ for all $i < j$, so that $M_n' \leq M_n$, so that we have $\abs{M_n - M_n'} = M_n - M_n'$. Fix $1 \geq \beta > 0$ to be chosen later. Let $b_i := \min_{j \neq i} a_{ij}$ (where $a_{ij} = a_{ji}$ if $i > j$). Let $D_n := \{i : b_i \geq \beta / n\}$. For $i \in D_n$, we have $a_{ij} \geq \beta / n$ for all $j \neq i$. As $x \mapsto x + \alpha n^{-1} \phi(x)$ is increasing, we have that $a_{ij}' \geq x_n$, where $x_n$ is the unique solution of
\[ x_n + \alpha n^{-1} \phi(x_n) = \beta n^{-1}. \]
From the definition of $\phi$, and as $\beta \leq 1$, we have
\[ x_n = \frac{\beta}{n + \alpha \sqrt{n}}. \]
Thus for $i \in D_n$, and $j \neq i$,
\begin{align*} 
a_{ij} - a_{ij}' &= \alpha n^{-1} \phi(a_{ij}') \\
&\geq \alpha n^{-1} \phi\Bigg(\frac{\beta}{n + \alpha \sqrt{n}}\Bigg) \\
&= \frac{\alpha \beta}{n^{3/2}  + \alpha n}.
\end{align*}
Now let 
\[ \begin{split}
B_n := \{&\text{vertices that are matched in the} \\ &\text{$\lambda/n$-diluted minimum matching of $\tilde{\mbf{G}}_n$} \} . \end{split}\]
We have
\begin{align*}
M_n - M_n' &\geq \frac{1}{2} \sum_{i \neq j} 1(\text{$i$ is matched to $j$ in $M_n$}) (a_{ij} - a_{ij}') \\
&\geq \frac{1}{2} \sum_{i \in B_n} \sum_{j \neq i} 1(\text{$i$ is matched to $j$ in $M_n$}) (a_{ij} - a_{ij}') \\
&\geq \frac{1}{2} \sum_{i \in D_n \cap B_n} \frac{\alpha \beta}{n^{3/2}  + \alpha n} \\
&= \frac{1}{2} \frac{\alpha \beta \abs{D_n \cap B_n}}{n^{3/2} + \alpha n}.
\end{align*}
Now suppose for the moment that $\abs{D_n \cap B_n} / n$ remains bounded away from zero in probability as $n \toinf$. Then there exists $c_1$ depending on $\alpha, \beta, \kappa$ such that
\[ \p(M_n - M_n' \leq c_1 / \sqrt{n}) \ra 0. \]
Thus recalling \eqref{min-match-interv-bd}, by taking $\alpha$ small so that $d_{TV}(\mc{L}_{M_n}, \mc{L}_{M_n'}) \leq C \alpha \leq 1/4$ (say), we have that for large enough $n$, and any $b - a \leq c_1 / \sqrt{n}$,
\[\p(a \leq M_n \leq b) \leq \frac{1}{2}(1 + 1/4 + 1/4) = \frac{3}{4}.\]
Thus the proof is complete once we show that we may take $\beta > 0$ such that $\abs{D_n \cap B_n} / n$ remains bounded away from zero in probability as $n \toinf$. As $\abs{D_n \cap B_n} \geq \abs{D_n} - \abs{B_n^c}$, this be immediate once we establish the following two lemmas.
\end{proof}

\begin{lemma}
We have $\abs{D_n} / n \stackrel{p}{\ra} \exp(-\beta)$. 
\end{lemma}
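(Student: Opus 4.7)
The plan is to establish convergence in probability by a straightforward first and second moment calculation, using the fact that the minimum of $n-1$ i.i.d.\ $\text{Exp}(1)$ random variables is $\text{Exp}(n-1)$. The structure is clean enough that neither step should be a real obstacle; this is essentially a routine second-moment argument.

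First I would compute the expectation. Since $b_i = \min_{j \neq i} a_{ij}$ is the minimum of $n-1$ i.i.d.\ $\text{Exp}(1)$ random variables, we have $b_i \sim \text{Exp}(n-1)$, so
\[ \p(b_i \geq \beta/n) = e^{-(n-1)\beta/n} \lra e^{-\beta}. \]
Writing $|D_n| = \sum_i 1(b_i \geq \beta/n)$, we obtain $\E |D_n|/n = e^{-(n-1)\beta/n} \to e^{-\beta}$.

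Next, I would bound the variance. For $i \neq j$, the joint event $\{b_i \geq \beta/n\} \cap \{b_j \geq \beta/n\}$ is equivalent to all $2n-3$ edges incident to $i$ or $j$ having weight at least $\beta/n$, so
\[ \p(b_i \geq \beta/n, b_j \geq \beta/n) = e^{-(2n-3)\beta/n}. \]
Hence
\[ \mathrm{Cov}(1(b_i \geq \beta/n), 1(b_j \geq \beta/n)) = e^{-2\beta}\bigl(e^{3\beta/n} - e^{2\beta/n}\bigr) = O(1/n) \]
uniformly in $i \neq j$. Combining with the variance of a single indicator being at most $1$,
\[ \Var{|D_n|} \leq n + n(n-1)\cdot O(1/n) = O(n), \]
so $\Var{|D_n|/n} = O(1/n)$.

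Finally, an application of Chebyshev's inequality gives $|D_n|/n - \E|D_n|/n \stackrel{p}{\ra} 0$, and together with the expectation calculation this yields the claim $|D_n|/n \stackrel{p}{\ra} e^{-\beta}$.
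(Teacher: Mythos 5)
Your proof is correct and is exactly the paper's intended argument: the paper's proof of this lemma simply says ``This follows by computing the first and second moments,'' and you have carried out precisely those computations (noting $b_i \sim \text{Exp}(n-1)$, bounding the pairwise covariance via the $2n-3$ distinct relevant edges, and concluding by Chebyshev).
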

\begin{proof}
This follows by computing the first and second moments.
\end{proof}

\begin{lemma}
We have that $\abs{B_n^c} / n$ converges in probability to a constant strictly less than 1.
\end{lemma}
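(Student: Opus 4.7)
The plan is to reduce the statement to showing $\beta(\lambda) < \lambda/2$, and then verify this by exhibiting one specific matching of $\tilde{\mbf{G}}_n$ whose expected $\lambda/n$-diluted cost is bounded above by $\lambda/2$ by a linear amount. The reduction is immediate: the $\lambda/n$-diluted cost of any matching dominates the penalty $(\lambda/(2n))$ times the number of unmatched vertices, so applied to the optimal matching we get $M_n \geq (\lambda/(2n))|B_n^c|$, i.e., $|B_n^c|/n \leq 2 M_n/\lambda$. Combined with the already-invoked in-probability convergence $M_n = n^{-1} M_\lambda(\kayn) \to \beta(\lambda)$, the claim follows once we show $\beta(\lambda) < \lambda/2$.

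To construct a beneficial matching, fix $c \in (0, \lambda)$ and let $\tilde{S}$ be the random set of pairs $(i,j)$ such that $a_{ij} \leq c/n$ and $a_{ij}$ is strictly less than every other weight incident to $i$ or $j$. Almost surely $\tilde{S}$ is a matching, since two selected edges sharing a vertex would force contradictory strict inequalities. By symmetry and independence of the $a_{ij}$'s (all $\mathrm{Exp}(1)$),
\[ \p((1,2) \in \tilde{S}) = \int_0^{c/n} e^{-t} \cdot e^{-2(n-2)t}\, dt = \frac{1 - e^{-(2n-3)c/n}}{2n - 3}, \]
so $\E|\tilde{S}|/n \to (1 - e^{-2c})/4$. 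Since each edge in $\tilde{S}$ has weight at most $c/n$, the expected $\lambda/n$-diluted cost of $\tilde{S}$ is bounded by
\[ \frac{c}{n}\,\E|\tilde{S}| + \frac{\lambda}{2n}\bigl(n - 2\E|\tilde{S}|\bigr) = \frac{\lambda}{2} - (\lambda - c)\,\frac{\E|\tilde{S}|}{n}, \]
which tends to $\lambda/2 - (\lambda - c)(1 - e^{-2c})/4 < \lambda/2$ for $0 < c < \lambda$.

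To conclude, since $M_n \in [0, \lambda/2]$ (the empty matching certifies the upper bound), bounded convergence upgrades $M_n \to \beta(\lambda)$ in probability to $\E M_n \to \beta(\lambda)$. Since $\E M_n$ is dominated by the expected cost of $\tilde{S}$ by optimality, the displayed limit yields $\beta(\lambda) \leq \lambda/2 - (\lambda - c)(1-e^{-2c})/4 < \lambda/2$, and hence $|B_n^c|/n \leq 2 M_n/\lambda$ is eventually dominated in probability by $2\beta(\lambda)/\lambda < 1$. The one delicate design choice is the rule for $\tilde{S}$: it must select $\Theta(n)$ edges in expectation while using only weights of order $1/n$. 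The local-minimum rule achieves this because each pair is selected with probability $\Theta(1/n)$, so the $\binom{n}{2}$ pairs produce linearly many matched edges — sidestepping the need for Karp--Sipser-style matching number estimates on sparse Erd\H{o}s-R\'{e}nyi graphs.
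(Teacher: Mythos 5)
Your approach is genuinely different from the paper's and is largely correct, but it proves a slightly weaker statement than the lemma actually asserts.

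\textbf{What you do differently.} The paper appeals to W\"{a}stlund's Proposition 3.1 to identify the limit of $\abs{B_n^c}/n$ as $F_\lambda(\lambda/2)$, and then to an inequality extracted from the proof of his Proposition 2.11 to see that $F_\lambda(\lambda/2)<1$. You instead prove the bound from scratch: observing $\abs{B_n^c}/n \leq 2M_n/\lambda$, you reduce to $\beta(\lambda) < \lambda/2$ and establish that by exhibiting a concrete matching (the ``local-minimum'' rule) whose expected $\lambda/n$-diluted cost is strictly below $\lambda/2$ by a fixed margin. The probability and moment computations check out: $\p((1,2)\in\tilde{S})=(1-e^{-(2n-3)c/n})/(2n-3)$, so $\E\abs{\tilde{S}}/n\to(1-e^{-2c})/4$, and the cost bound $\lambda/2-(\lambda-c)(1-e^{-2c})/4<\lambda/2$ follows for any $c\in(0,\lambda)$. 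The bounded-convergence step upgrading $M_n\to\beta(\lambda)$ in probability to $\E M_n\to\beta(\lambda)$ is also valid since $0\leq M_n\leq\lambda/2$. This is cleaner in that it makes the whole lemma self-contained, not dependent on mining W\"{a}stlund's proofs.

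\textbf{The gap.} The lemma claims that $\abs{B_n^c}/n$ \emph{converges in probability} to a constant strictly less than 1. Your argument only gives a one-sided bound: $\abs{B_n^c}/n \leq 2M_n/\lambda \stackrel{p}{\to} 2\beta(\lambda)/\lambda < 1$, so $\abs{B_n^c}/n$ is eventually bounded away from 1 with high probability. It says nothing about whether $\abs{B_n^c}/n$ actually converges; you never identify or construct its limit. The paper gets convergence for free from W\"{a}stlund's Proposition 3.1, whereas your proof bypasses that result entirely. In the application, the convergence is inessential — what the variance lower-bound argument uses is precisely that $\limsup_n \abs{B_n^c}/n < 1$ in probability, which your bound provides. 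But as a proof of the lemma \emph{as stated}, the convergence assertion is left unproved. You should either weaken the statement to the $\limsup$ bound (which is all that is needed downstream) or add a separate step — citing W\"{a}stlund's Proposition 3.1 or some other argument — establishing the existence of the limit.
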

\begin{proof}
Note $\abs{B_n^c}$ is the number of unmatched vertices in the $\lambda$-diluted minimum matching. By Proposition 3.1 of \cite{Wast2012}, we have that $\abs{B_n^c} / n \stackrel{p}{\ra} F_\lambda(\lambda / 2)$, where $F_\lambda : [-\lambda / 2, \lambda / 2] \ra [0, 1]$ is some function. Moreover, from the proof of Proposition 2.11 of \cite{Wast2012}, we have
\[ \lambda \leq \frac{-\log F_\lambda(\lambda/2)}{F_\lambda(\lambda / 2)}, \]
which implies that $F_\lambda(\lambda /2) < 1$ for $\lambda > 0$.
\end{proof}

\section{Application of Theorem \ref{main-result} to optimal edge cover}

We devote a separate section for Optimal edge cover because unlike in Section \ref{general-theorem-applications}, we will need to spend some time establishing some basic facts before we can apply Theorem \ref{main-result}.

The approach to proving Theorem \ref{opt-edge-cover-clt} will be as follows. Because Theorem \ref{main-result} gives a rate of convergence, we will be able to first prove a central limit theorem for the quantity
\[ \frac{EC_{\lambda_n}(\kayn) - \E EC_{\lambda_n}(\kayn)}{\sqrt{\Var{EC_{\lambda_n}(\kayn)}}}, \]
where now $\lambda_n$ is taken to infinity with $n$. Moreover, we will show that $\lambda_n$ is large enough so that 
\beq\label{slutsky-1} \frac{\Var{EC(\kayn)}}{\Var{EC_{\lambda_n}(\kayn)}} \ra 1, \eeq
and
\beq\label{slutsky-2} \frac{\E EC(\kayn) - \E EC_{\lambda_n}(\kayn)}{\sqrt{\Var{EC(\kayn)}}} \ra 0. \eeq
This will then allow us to conclude Theorem \ref{opt-edge-cover-clt}.

\subsection{Basic facts of optimal edge cover}

As detailed in Section \ref{opt-edge-cover}, $EC_\lambda$ is actually a function of $\kayn(\lambda)$, and so is an optimization problem on a sparse Erd\H{o}s-R\'{e}nyi graph. However, unless the situation demands, we will continue writing $\kayn$ for brevity. 
We first investigate how large $\lambda$ needs to be for $EC_\lambda(\kayn)$ to be a good approximation of $EC(\kayn)$. We will see that the answer is $\lambda = C \log n$.

\begin{lemma}
Suppose there is a number $K$ such that every vertex $v \in \kayn$ has at least one incident edge $e$ with $w_e \leq K$. Then every edge in the optimal edge cover has weight at most $2K$.
\end{lemma}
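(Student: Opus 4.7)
The plan is to argue by contradiction via a simple swap argument. Suppose the conclusion fails, so that the optimal edge cover $S$ contains some edge $e = (u, v)$ with $w_e > 2K$. The goal is to exhibit a strictly cheaper edge cover $S'$, contradicting the optimality of $S$.

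By hypothesis, there exist edges $e_u$ incident to $u$ and $e_v$ incident to $v$ with $w_{e_u} \leq K$ and $w_{e_v} \leq K$. Since $w_e > 2K \geq K \geq w_{e_u}$, we have $e_u \neq e$, and similarly $e_v \neq e$. I would then define
\[ S' := (S \setminus \{e\}) \cup \{e_u, e_v\}. \]
To verify $S'$ is an edge cover, note that $u$ is incident to $e_u \in S'$ and $v$ is incident to $e_v \in S'$, while any vertex $x \notin \{u, v\}$ that was covered in $S$ must have been covered by some edge other than $e$ (since $e$ is only incident to $u, v$), and that edge still lies in $S'$.

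For the weight comparison, viewing $S, S'$ as sets of edges, we have
\[ w(S') \leq w(S) - w_e + w_{e_u} + w_{e_v} \leq w(S) - w_e + 2K < w(S), \]
where the final strict inequality uses $w_e > 2K$. This contradicts the optimality of $S$, completing the proof.

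There is essentially no serious obstacle here; the only points to watch are (i) confirming $e_u, e_v \neq e$ so that deleting $e$ and then adding $e_u, e_v$ genuinely changes the edge set in the intended way, and (ii) treating edge covers as sets (so no double-counting occurs if $e_u$ or $e_v$ was already in $S$, in which case $w(S')$ is only smaller than the bound above). Both are immediate from the weight inequality.
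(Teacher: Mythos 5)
Your proof is correct and takes essentially the same approach as the paper; the paper states the swap argument directly (``by optimality, $w_e \leq w_{e_u} + w_{e_v} \leq 2K$'') while you phrase it as a contradiction, but the underlying exchange argument is identical.
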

\begin{proof}
Let $e = (v, u)$ be in the optimal edge cover. By hypothesis, there are edges $e_v, e_u$ incident to $v, u$ respectively, such that $w_{e_v}, w_{e_u} \leq K$. Now by optimality, we must have $w_e \leq w_{e_v} + w_{e_u} \leq 2K$.
\end{proof}

\begin{lemma}
If every edge in the optimal edge cover has weight at most $2K$, then we have $EC_{4K}(\kayn) = EC(\kayn)$.
\end{lemma}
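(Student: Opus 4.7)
The plan is to prove the equality by two inequalities, one trivial and one using the hypothesis that the optimal edge cover uses only edges of weight $\leq 2K$.

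First I would observe the easy direction $EC_{4K}(\kayn) \leq EC(\kayn)$. Let $\mathrm{OPT}$ be an optimal edge cover (so every edge in $\mathrm{OPT}$ has weight at most $2K \leq 4K$). Then $\mathrm{OPT}$ is a valid candidate for the $4K$-diluted problem: its edges all have weight at most $4K$, it leaves no vertex uncovered, so its $4K$-diluted cost equals its ordinary weight, which is $EC(\kayn)$. Hence $EC_{4K}(\kayn) \leq EC(\kayn)$.

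For the reverse inequality $EC(\kayn) \leq EC_{4K}(\kayn)$, let $S^*$ be an optimal $4K$-diluted collection, and let $U$ be its set of uncovered vertices, so that
\[ EC_{4K}(\kayn) = \sum_{e \in S^*} w_e + 2K \cdot |U|. \]
I would augment $S^*$ into an honest edge cover as follows: for each $v \in U$, pick one edge $e_v \in \mathrm{OPT}$ incident to $v$ (such an edge exists because $\mathrm{OPT}$ is an edge cover), and define $S' := S^* \cup \{ e_v : v \in U \}$. Since $S^*$ already covers $[n] \setminus U$ and the added edges cover $U$, $S'$ is an edge cover. Writing $E_U := \{ e_v : v \in U \}$ as a set, $|E_U| \leq |U|$ and by hypothesis each edge in $E_U$ has weight at most $2K$, so
\[ \mathrm{weight}(S') \leq \sum_{e \in S^*} w_e + \sum_{e \in E_U} w_e \leq \sum_{e \in S^*} w_e + 2K \cdot |U| = EC_{4K}(\kayn). \]
Therefore $EC(\kayn) \leq \mathrm{weight}(S') \leq EC_{4K}(\kayn)$, and combining with the reverse inequality completes the proof.

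There is really no obstacle here: the argument is just a direct comparison of the two feasible regions, with the hypothesis being used precisely to bound the weight of the augmenting edges $e_v$ by the penalty $2K$ per uncovered vertex. The only minor care needed is that $|E_U| \leq |U|$ (an edge incident to two vertices of $U$ may be picked twice, but as a set is added only once), which only helps the bound.
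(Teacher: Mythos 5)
Your proof is correct and is essentially the paper's argument: take the optimal $4K$-diluted collection and augment it with one optimal-edge-cover edge per uncovered vertex, using the hypothesis $w_{e_v}\leq 2K=\lambda/2$ to absorb each added weight into the penalty saved. The only cosmetic difference is that you add all the augmenting edges at once rather than iterating, and you needn't justify that $\mathrm{OPT}$ is ``valid'' for the $4K$-diluted problem by bounding its edge weights---any edge collection is admissible in the diluted problem, so $EC_{4K}\leq EC$ is immediate.
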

\begin{proof}
As $EC_\lambda(\kayn) \leq EC(\kayn)$ for all $\lambda > 0$, only one direction needs to be proven. Let $\mc{C}$ be the optimal $4K$-diluted edge cover. If the collection $\mc{C}$ covers every vertex, then it is in fact an edge cover and thus equality is automatic. So suppose $\mc{C}$ leaves a vertex $v$ un-covered. Let $e_v$ be the edge in the optimal edge cover incident to $v$. Then by adding the edge $e_v$ to $\mc{C}$, the $\lambda$-diluted cost of $\mc{C}$ increases by at most $w_{e_v} - 2K \leq 0$. Repeating for all un-covered vertices, we obtain $EC(\kayn) \leq EC_{4K}(\kayn)$, as desired.
\end{proof}

These two lemmas show that if $\kayn(\lambda)$ has no isolated vertices, then we have $EC_{4\lambda}(\kayn) = EC(\kayn)$. Let $p_n(\lambda) := 1 - e^{-\lambda / n}$, and let $\deg_\lambda(v)$ be the degree of vertex $v$ in $\kayn(\lambda)$.

\begin{lemma}
We have
\[ \p\bigg(\deg_\lambda(v) > \frac{1}{2} n p_n(\lambda), \forall v \in V \bigg) \geq 1 - 3n e^{-n p_n(\lambda)/32}. \]
\end{lemma}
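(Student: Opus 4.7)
The plan is a standard Chernoff bound on the lower tail of each vertex degree, followed by a union bound over the $n$ vertices. The reason this is straightforward is that $\kayn(\lambda)$ is by construction an Erd\H{o}s--R\'{e}nyi graph with edge probability $p_n(\lambda)$, so for each fixed vertex $v$, the degree $\deg_\lambda(v)$ is exactly $\mathrm{Binomial}(n-1, p_n(\lambda))$, with mean $\mu_n := (n-1)p_n(\lambda)$. In particular the degrees at distinct vertices are not independent, but they don't need to be for a union bound.

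First I would fix a vertex $v$ and write
\[ \p\bigg(\deg_\lambda(v) \leq \tfrac{1}{2} n p_n(\lambda)\bigg) = \p\big(\deg_\lambda(v) \leq (1-\delta) \mu_n\big), \]
where $\delta = 1 - \frac{n p_n(\lambda)/2}{(n-1)p_n(\lambda)} = \frac{n-2}{2(n-1)}$, which is $\geq 1/4$ for all $n \geq 3$. Applying the standard multiplicative Chernoff bound $\p(X \leq (1-\delta)\E X) \leq \exp(-\delta^2 \E X / 2)$ for a sum of independent Bernoullis then gives
\[ \p\bigg(\deg_\lambda(v) \leq \tfrac{1}{2} n p_n(\lambda)\bigg) \leq \exp\bigg(-\frac{(n-2)^2 p_n(\lambda)}{8(n-1)}\bigg) \leq \exp\bigg(-\frac{n p_n(\lambda)}{32}\bigg), \]
the last inequality holding for $n$ sufficiently large (with room to spare since $(n-2)^2 / (n-1) \geq n/2$ eventually).

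Next I would union-bound over the $n$ vertices to obtain
\[ \p\bigg(\exists v : \deg_\lambda(v) \leq \tfrac{1}{2} n p_n(\lambda)\bigg) \leq n \exp(-np_n(\lambda)/32), \]
which after taking complements gives the desired bound (the constant $3$ in the stated inequality is slack to absorb the small-$n$ cases, which I would handle separately by noting the bound is vacuous whenever $3n e^{-np_n(\lambda)/32} \geq 1$).

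There is no conceptual obstacle here: the main step is a careful computation to check that the constant $1/32$ in the exponent suffices, and the only slightly delicate point is noting that $\deg_\lambda(v)$ has mean $(n-1)p_n(\lambda)$ rather than $np_n(\lambda)$, but since we are deviating by roughly a factor of $2$, the ratio $(n-1)/n$ is easily absorbed into the constants. I would not bother with the small-$n$ regime in the main calculation, observing instead that when $3ne^{-np_n(\lambda)/32} \geq 1$ the statement holds trivially.
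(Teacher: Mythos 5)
Your proof is correct and follows essentially the same strategy as the paper: a per-vertex lower-tail Chernoff bound for the $\mathrm{Binomial}(n-1, p_n(\lambda))$ degree, followed by a union bound over the $n$ vertices. The only difference is that the paper cites Theorem 8.1 of Blum--Hopcroft--Kannan for the single-vertex bound (which already supplies the constant $3$), whereas you derive the Chernoff estimate by hand and use the factor $3$ as slack to dispose of the small-$n$ regime where the bound is vacuous.
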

\begin{proof}
By Theorem 8.1 of \cite{FODS}, we have for a given $v \in \kayn$
\[ \p\bigg(\deg_\lambda(v) \leq \frac{1}{2} np_n(\lambda)\bigg) \leq 3 e^{-n p_n(\lambda) / 32}. \]
We conclude by applying the Union bound.
\end{proof}

\begin{prop}\label{lambda-c1-c2-large}
For any constant $C_2$, there is a constant $C_1$ possibly depending on $C_2$ such that for $\lambda_n = C_1 \log n$, and large enough $n$, we have
\[ \p(EC_{\lambda_n}(\kayn) = EC(\kayn)) \geq 1 - 3 n^{-C_2}. \]
\end{prop}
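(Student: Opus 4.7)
The plan is to apply the three preceding lemmas in sequence with the choice $K := \lambda_n / 4$. The event that every vertex of $\kayn$ has an incident edge of weight at most $K$ is precisely the event that no vertex is isolated in the subgraph $\kayn(K)$. The preceding lemma on degrees, applied with $\lambda = K$, bounds the probability of the complementary event by $3 n e^{-n p_n(K)/32}$, since having $\deg_K(v) > \tfrac{1}{2} n p_n(K)$ certainly implies positive degree once $n$ is large enough. On this good event, the first of the three lemmas forces every edge of the optimal edge cover of $\kayn$ to have weight at most $2K = \lambda_n / 2$, and then the second lemma yields $EC_{\lambda_n}(\kayn) = EC_{4K}(\kayn) = EC(\kayn)$.

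It then only remains to choose $C_1$ so that $3 n e^{-n p_n(K)/32} \leq 3 n^{-C_2}$. For large $n$ the argument $x = \lambda_n / (4n) = (C_1 \log n)/(4n)$ lies in $[0, 1]$, so the elementary inequality $1 - e^{-x} \geq x/2$ gives
\[ n p_n(K) \;=\; n \bigl(1 - e^{-\lambda_n / (4n)}\bigr) \;\geq\; \lambda_n / 8 \;=\; (C_1 / 8) \log n, \]
and consequently the failure probability is at most $3 n^{1 - C_1 / 256}$. Taking $C_1 \geq 256 (1 + C_2)$ makes the exponent at most $-C_2$, establishing the claim for all sufficiently large $n$.

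The substantive content of the proposition lies entirely in the three preceding lemmas; this final step just packages them together with a concrete logarithmic choice of $\lambda_n$, so there is no real obstacle. The only mild choice point is the constant $1/256$ in the exponent, which comes from chaining the factor $1/32$ from the degree concentration bound with the factor $1/8$ lost when passing from $\lambda_n$ to $n p_n(\lambda_n / 4)$; any constant of this order suffices since $C_1$ is allowed to depend on $C_2$.
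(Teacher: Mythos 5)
Your proof is correct and follows essentially the same approach as the paper: chain the two combinatorial lemmas to reduce to the event that $\kayn(\lambda_n/4)$ has no isolated vertices, bound that by the degree concentration lemma, and then tune $C_1$ logarithmically. You are in fact slightly more careful than the paper, which conflates $\lambda$ with $\lambda_n$ in this step and quotes $C_1 = 64(C_2+1)$; tracking the factor $\lambda_n/4$ as you did gives $C_1 \geq 256(1 + C_2)$, but since the proposition only asserts existence of some $C_1$ this discrepancy is immaterial.
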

\begin{proof}
By the previous few lemmas, we have
\[ \p(EC_{\lambda_n}(\kayn) \neq EC(\kayn)) \leq \p\bigg(\exists v, \deg_\lambda(v) \leq \frac{1}{2} np_n(\lambda_n)\bigg) \leq 3n e^{-np_n(\lambda_n) / 32}.  \]
With $\lambda_n = C_1 \log n$, we have $p_n(\lambda_n) = 1 - e^{-\lambda_n / n} \geq \frac{1}{2}\frac{\lambda_n}{n}$ for large enough $n$. Thus we see that it suffices to take $C_1 = 64(C_2 + 1)$.
\end{proof}

With $\lambda_n = C_1 \log n$, we now proceed to show \eqref{slutsky-1}, \eqref{slutsky-2}. First, we need a variance lower bound.

\begin{lemma}\label{opt-edge-cover-var-lower-bd}
We have
\[ \liminf_{n \toinf} \frac{1}{n} \Var{EC(\kayn)} > 0.\]
\end{lemma}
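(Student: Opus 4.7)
The plan is to follow the general framework of \cite{Ch2018}, adapting the argument given for the $\lambda$-diluted minimum matching variance lower bound. First I would rescale: let $\tilde{\mbf{K}}_n$ be $\kayn$ with all edge weights divided by $n$, so the weights become i.i.d.\ $\mathrm{Exp}(1)$, and let $M_n := EC(\tilde{\mbf{K}}_n) = n^{-1} EC(\kayn)$. It then suffices to prove $\liminf_{n \toinf} n \Var{M_n} > 0$, and by Lemma 1.2 of \cite{Ch2018} this reduces to exhibiting $c_1, c_2 > 0$ such that $\p(a \leq M_n \leq b) \leq 1 - c_2$ for all $b - a \leq c_1/\sqrt{n}$ and all $n$ large enough.

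Next, introduce the same perturbation $\phi$ as in the proof of Theorem 2.9 of \cite{Ch2018} (and as already used in the minimum matching lemma): define $a_{ij}'$ by $a_{ij}' + \alpha n^{-1} \phi(a_{ij}') = a_{ij}$, and let $M_n'$ be $EC$ evaluated with the weights $a'$. The bound $d_{TV}(\mc{L}_{M_n}, \mc{L}_{M_n'}) \leq C \alpha$ carries over unchanged, so via Lemma 1.2 of \cite{Ch2018} it remains to lower bound $M_n - M_n'$ (note $M_n' \leq M_n$ since $a' \leq a$ componentwise). Here the proof becomes slightly easier than in the matching case, because every vertex is covered in the optimal edge cover $\mc{C}$ for $M_n$. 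Using $\mc{C}$ as a feasible cover for the $a'$-problem gives
\[ M_n - M_n' \geq \sum_{e \in \mc{C}} (a_e - a_e'). \]

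Following the matching proof, set $b_i := \min_{j \neq i} a_{ij}$ and $D_n := \{i : b_i \geq \beta/n\}$ for a small $0 < \beta \leq 1$. For any edge $e \in \mc{C}$ incident to some $i \in D_n$ we have $a_e \geq \beta/n$, hence $a_e' \geq x_n := \beta/(n + \alpha\sqrt{n})$ and $a_e - a_e' \geq \alpha\beta/(n^{3/2} + \alpha n)$. Since every vertex in $D_n$ is covered by at least one edge of $\mc{C}$ and each such edge contributes to at most two vertices, at least $|D_n|/2$ edges of $\mc{C}$ are incident to $D_n$, so
\[ M_n - M_n' \geq \frac{|D_n|}{2} \cdot \frac{\alpha \beta}{n^{3/2} + \alpha n}. \]
A first/second-moment computation using $b_i \sim \mathrm{Exp}(n-1)$ shows $|D_n|/n \stackrel{p}{\ra} e^{-\beta} > 0$, so for a suitable $c_1 > 0$ we get $\p(M_n - M_n' \leq c_1/\sqrt{n}) \ra 0$. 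Choosing $\alpha$ small so that $C\alpha \leq 1/4$ and plugging into Lemma 1.2 of \cite{Ch2018} yields the anti-concentration bound and hence the variance lower bound. I do not anticipate a serious obstacle: the main conceptual point is that the ``every vertex covered'' structure of edge covers removes the need for the separate lemma on unmatched vertices that appeared for $\lambda$-diluted matching, so the present argument is in fact a mild simplification of the matching case.
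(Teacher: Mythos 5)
Your proof is correct and follows essentially the same route as the paper, which simply refers to Theorem 2.9 of Chatterjee (2019) and notes it carries over to the complete graph; you have supplied the details of that adaptation. Your observation that the edge-cover structure makes every vertex covered --- so that, unlike the $\lambda$-diluted minimum matching case, no separate lemma on the fraction of uncovered/unmatched vertices is needed and one simply charges at least $|D_n|/2$ edges of the optimal cover --- is accurate and is exactly why the cited argument (which concerns perfect matchings on the complete bipartite graph, where again every vertex is incident to a selected edge) transfers here with only a slight modification.
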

\begin{proof}
The proof of Theorem 2.9 of \cite{Ch2018} carries over with a slight modification (the argument is for complete bipartite graphs, but it also works for complete graphs) to show
\[ \liminf_{n \toinf} n\Var{n^{-1} EC(\kayn)} > 0. \qedhere\]
\end{proof}

\begin{prop}\label{lambda-n-approx}
There is a numerical constant $C_1$ such that with $\lambda_n = C_1 \log n$, we have
\[ \frac{\Var{EC(\kayn)}}{\Var{EC_{\lambda_n}(\kayn)}} \ra 1, \]
and
\[\frac{\E EC(\kayn) - \E EC_{\lambda_n}(\kayn)}{\sqrt{\Var{EC(\kayn)}}} \ra 0. \]
\end{prop}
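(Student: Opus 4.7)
The plan is to set $D_n := EC(\kayn) - EC_{\lambda_n}(\kayn)$, which is non-negative because $EC_{\lambda_n} \leq EC$, and which vanishes outside the rare event $A_n := \{EC(\kayn) \neq EC_{\lambda_n}(\kayn)\}$. By Proposition \ref{lambda-c1-c2-large}, we may choose $C_1$ depending on a free parameter $C_2$ so that $\p(A_n) \leq 3 n^{-C_2}$. Cauchy--Schwarz then gives, for each $m \geq 1$,
\[ \E D_n^m \;=\; \E\bigl[D_n^m \mathbf{1}_{A_n}\bigr] \;\leq\; \sqrt{\E D_n^{2m}}\;\sqrt{\p(A_n)} \;\leq\; \sqrt{\E EC(\kayn)^{2m}}\;\sqrt{3 n^{-C_2}}, \]
so everything reduces to an a priori polynomial moment bound on $EC(\kayn)$.

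For that moment bound I would use the naive per-vertex edge cover: for each vertex $v$ let $e_v$ denote a minimum-weight edge incident to $v$; then $\{e_v : v \in V\}$ is an edge cover, so $EC(\kayn) \leq \sum_v w_{e_v}$. Since $w_{e_v}$ is the minimum of $n-1$ i.i.d.\ $n\,\mathrm{Exp}(1)$ variables, $w_{e_v} \sim n\,\mathrm{Exp}(n-1)$, and hence $\E w_{e_v}^k \leq 2^k k!$ for all $k \geq 1$ and $n \geq 2$. Minkowski's inequality then yields
\[ \bigl(\E EC(\kayn)^k\bigr)^{1/k} \;\leq\; \sum_v \bigl(\E w_{e_v}^k\bigr)^{1/k} \;\leq\; 2n (k!)^{1/k}, \]
so $\E EC(\kayn)^k \leq C_k n^k$ for every $k$.

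Combining the two displays, $m=1, k=2$ gives $\E D_n \leq C n^{1 - C_2/2}$ and $m=2, k=4$ gives $\E D_n^2 \leq C n^{2 - C_2/2}$. Fixing $C_2 = 3$ (hence some explicit, sufficiently large numerical $C_1$) makes $\E D_n = O(n^{-1/2})$ and $\E D_n^2 = O(n^{1/2})$. Combined with Lemma \ref{opt-edge-cover-var-lower-bd}, which gives $\Var{EC(\kayn)} \geq c n$ for some $c > 0$ and large $n$, the first limit is immediate since $\E EC(\kayn) - \E EC_{\lambda_n}(\kayn) = \E D_n$. For the variance ratio, I would expand
\[ \Var{EC(\kayn)} \;=\; \Var{EC_{\lambda_n}(\kayn)} + 2\,\mathrm{Cov}\bigl(EC_{\lambda_n}(\kayn), D_n\bigr) + \Var{D_n}, \]
bound $|\mathrm{Cov}| \leq \sqrt{\Var{EC_{\lambda_n}(\kayn)} \Var{D_n}}$, and use $\Var{D_n} \leq \E D_n^2 = o(n)$ together with $\Var{EC(\kayn)} \geq cn$. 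The $L^2$-triangle inequality $\sqrt{\Var{EC(\kayn)}} \leq \sqrt{\Var{EC_{\lambda_n}(\kayn)}} + \sqrt{\Var{D_n}}$ first forces $\Var{EC_{\lambda_n}(\kayn)}$ to also be of order $n$, after which the displayed identity forces the ratio to tend to $1$. There is no serious obstacle here: the argument is essentially a standard truncation, with the sole substantive input being Proposition \ref{lambda-c1-c2-large} -- the polynomial blow-up of moments of $EC(\kayn)$ is absorbed by the super-polynomial smallness of $\p(A_n)$.
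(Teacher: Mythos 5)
Your proof is correct and takes essentially the same route as the paper: it isolates $D_n = EC(\kayn) - EC_{\lambda_n}(\kayn)$, applies Cauchy--Schwarz against the rare event from Proposition \ref{lambda-c1-c2-large} together with a crude polynomial moment bound on $EC(\kayn)$, and finishes with Lemma \ref{opt-edge-cover-var-lower-bd} and the variance decomposition $\Var{EC(\kayn)} = \Var{EC_{\lambda_n}(\kayn)} + 2\,\mathrm{Cov}(EC_{\lambda_n}(\kayn), D_n) + \Var{D_n}$. The only incidental differences from the paper's proof are the mechanism of the moment bound (per-vertex minimum edges plus Minkowski, giving $\E EC(\kayn)^k \le C_k n^k$, versus the paper's bound by a sum of $n$ i.i.d.\ $n\,\mathrm{Exp}(1)$ variables) and the particular choice of $C_2$.
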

\begin{proof}

Take $C_2 > 100$ (say), and apply Lemma \ref{lambda-c1-c2-large} to obtain $C_1$. We first show the second assertion. Observe
\[\begin{split}
0 \leq \E& EC(\kayn) - \E EC_{\lambda_n}(\kayn) \leq \\
&(\E (EC(\kayn) - EC_{\lambda_n}(\kayn))^2)^{1/2} (\p(EC(\kayn) \neq EC_{\lambda_n}(\kayn)))^{1/2},
\end{split}
\]
where the first inequality follows since $EC_{\lambda_n}(\kayn) \leq EC(\kayn)$. Thus it follows from Lemma \ref{lambda-c1-c2-large} that for large enough $n$, we have that the right hand side above may be bounded by
\[ (\E EC(\kayn)^2)^{1/2} n^{-50} .\]
Observe that $EC(\kayn)$ is bounded by the sum of $n$ i.i.d. $n\text{Exp}(1)$ random variables, so that $\E EC(\kayn)^2 \leq C n^4$. Combining this with Lemma \ref{opt-edge-cover-var-lower-bd}, the second assertion now follows.

For the first assertion, write
\[ \begin{split}
\Var{EC_{\lambda_n}(\kayn)} = ~&\Var{EC(\kayn)} + \Var{EC(\kayn) - EC_{\lambda_n}(\kayn)} ~+ \\ &2\mathrm{Cov}(EC_{\lambda_n}(\kayn) - EC(\kayn), EC(\kayn)). 
\end{split} \]
By arguing similar to before, we may show
\[\Var{EC (\kayn) - EC_{\lambda_n}(\kayn)} \leq \E ( EC (\kayn) - EC_{\lambda_n}(\kayn))^2 \leq C n^{-46}.\]
Combining this with Lemma \ref{opt-edge-cover-var-lower-bd} and the previous observation $\E EC(\kayn)^2 \leq Cn^4$, the first assertion now follows.
\end{proof}

\subsection{Constructing the local approximation}\label{construct-local-approx}

In this section, we begin to construct the local approximations $LA_k^L, LA_k^U$ that are needed for Theorem \ref{main-result}. As before, this is done by finding a recursion for $EC_\lambda(\kayn)$. With $\lambda$ implicit, let $\geen := \kayn(\lambda)$, so that $EC_\lambda(\kayn) = EC_\lambda(\geen)$. The main difference between optimal edge cover and the problems considered in Section \ref{general-theorem-applications} is that for optimal edge cover, the recursion we derive will not be for $EC_\lambda(\geen) - EC_\lambda(\geen - v)$, and instead will be for a slightly different quantity. Indeed, this is the main reason why we can not use Corollary \ref{cor-main-result}, and instead have to resort to Theorem \ref{main-result}.

For now, fix $\lambda > 0$. What we will eventually do is apply Theorem \ref{main-result} to obtain a rate of convergence for fixed $\lambda$. This rate of convergence will be quantitative enough that we may actually take $\lambda_n= C_1 \log n$ (from Proposition \ref{lambda-n-approx}) and still have the rate converge to 0. 

For the rest of Section \ref{construct-local-approx}, we follow \cite{Wast2009}, with no claims of originality. Let $V_n$ denote the vertex set of $\geen$. For a subset of vertices $S \sse V_n$, define $EC_\lambda(\geen, S)$ to be the optimal $\lambda$-diluted edge cover of $S$, which uses edges of $\geen$. In particular, one may use edges which connect $S$ to $V_n - S$. For example, if $S$ consists of a single vertex, then $EC(\geen, S)$ will be the distance from that vertex to its nearest neighbor in $\geen$, if that distance is less than $\lambda /2$, and $\lambda / 2$ otherwise. Note also $EC_\lambda(\geen, V_n) = EC_\lambda (\geen)$. Define the function
\[ h_\lambda(v, \geen, S) := EC_\lambda(\geen, S) - EC_\lambda(\geen, S - \{v\}). \]
Observe that
\beq\label{h-lambda-interv-bound} 0 \leq h_\lambda \leq \frac{\lambda}{2}. \eeq
Now the motivation for introducing $h_\lambda$ is because for $e =(v, u)$, we may write
\beq\label{h-lambda-pert-ident}\begin{split}
EC_\lambda(\geen) - EC_\lambda(\geen^e) = ~& h_\lambda(v, \geen, V_n) - h_\lambda(v, \geen^e, V_n) ~+ \\
& h_\lambda(u, \geen, V_n - \{v\}) - h_\lambda(u, \geen^e, V_n - \{v\}) . 
\end{split} \eeq
The proof follows by noting
\[ EC_\lambda(\geen, V_n - \{v, u\}) = EC_\lambda(\geen^e, V_n - \{v, u\}), \]
because if the vertices $v, u$ do not need to be covered, then there is no need to use the edge $(v, u)$.

We now proceed to derive a recursion for $h_\lambda$, from which we will be able to construct local approximations to $h_\lambda$, and thus also to $EC_\lambda(\geen) - EC_\lambda(\geen^e)$.

\begin{lemma}
Let $v$ have neighbors $v_1, \ldots, v_d$ in $\geen$. Assume $v \in S$. We have
\[ h_\lambda(v, \geen, S) = \min_{1 \leq m \leq d} \Bigg(\frac{\lambda}{2}, w_{(v, v_m)} - h_\lambda(v_m, \geen, S - \{v\}) \Bigg).\]
\end{lemma}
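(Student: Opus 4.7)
The plan is to derive this recursion by conditioning on how $v$ is handled in an optimal $\lambda$-diluted edge cover of $S$. Since any edge $e$ with $w_e > \lambda$ may be removed for a net savings of at least $w_e - \lambda > 0$ (its two endpoints together contribute at most $\lambda$ in penalty if left uncovered), the optimum uses only edges of $\geen = \kayn(\lambda)$. Consequently the only edges incident to $v$ that can appear in an optimizer are the $(v, v_m)$ for $m = 1, \ldots, d$.

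The first step I would carry out is to establish the decomposition
\[ EC_\lambda(\geen, S) = \min\bigg(\frac{\lambda}{2} + EC_\lambda(\geen, S - \{v\}),~ \min_{1 \leq m \leq d} \big[w_{(v, v_m)} + EC_\lambda(\geen, S - \{v, v_m\})\big]\bigg). \]
The first term accounts for leaving $v$ uncovered (paying the penalty $\lambda/2$), in which case the remaining edges are a valid $\lambda$-diluted cover of $S - \{v\}$. The second accounts for covering $v$ via the edge $(v, v_m)$; after paying $w_{(v, v_m)}$, the remaining edges minimize the cost of covering the vertices of $S$ other than $v$ and $v_m$, which is $EC_\lambda(\geen, S - \{v, v_m\})$ (noting that when $v_m \notin S$ we have $S - \{v, v_m\} = S - \{v\}$ and $v_m$ contributes no penalty to begin with). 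Each direction of the equality is checked by exhibiting a valid cover attaining the corresponding side.

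Next I would rewrite each inner term using the definition of $h_\lambda$. When $v_m \in S$ we have $h_\lambda(v_m, \geen, S - \{v\}) = EC_\lambda(\geen, S - \{v\}) - EC_\lambda(\geen, S - \{v, v_m\})$ directly; when $v_m \notin S$, both sides vanish since $S - \{v, v_m\} = S - \{v\}$. So uniformly in $m$,
\[ EC_\lambda(\geen, S - \{v, v_m\}) = EC_\lambda(\geen, S - \{v\}) - h_\lambda(v_m, \geen, S - \{v\}). \]
Substituting this into the decomposition above and then subtracting $EC_\lambda(\geen, S - \{v\})$ from both sides yields the claimed recursion for $h_\lambda(v, \geen, S)$. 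The only delicate point — really a matter of bookkeeping rather than a genuine obstacle — is making sure the decomposition treats the case $v_m \notin S$ consistently with the convention that the $h_\lambda$ term vanishes there; once that is verified, the algebraic reduction is immediate.
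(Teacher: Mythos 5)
Your proof is correct and follows essentially the same route as the paper: establish the decomposition of $EC_\lambda(\geen, S)$ by conditioning on whether $v$ is left uncovered or covered via some edge $(v, v_m)$, then subtract $EC_\lambda(\geen, S - \{v\})$. The preliminary remark that heavy edges can be dropped is redundant here since $EC_\lambda(\geen, S)$ is by definition restricted to edges of $\geen = \kayn(\lambda)$, but this does not affect the argument, and your explicit handling of the case $v_m \notin S$ (where the convention forces $h_\lambda$ to vanish) is a harmless bit of extra care.
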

\begin{proof}
The edge collection which gives $EC_\lambda(\geen, S)$ either uses at least one of the edges $(v, v_m)$, $1 \leq m \leq d$, or does not cover $v$, which incurs a cost of $\lambda/2$. Thus
\[\begin{split} 
EC_\lambda(\geen, S) = \min_{1 \leq m \leq d} &~\Bigg(\frac{\lambda}{2} + EC_\lambda(\geen, S -\{v\}), \\ 
&w_{(v, v_m)} + EC_{\lambda}(\geen, S - \{v, v_m\})\Bigg). 
\end{split}\]
Now subtract $EC_\lambda(\geen, S - \{v\})$ on both sides.
\end{proof}

Of course, for $v \notin S$, $h_\lambda(v, \geen, S) = 0$. Now by combining the above Lemma with \eqref{h-lambda-interv-bound}, we obtain the following:
\beq\label{h-lambda-recurs} h_\lambda(v, \geen, S) = \max\Bigg(0, \min_{1 \leq m \leq d} \Bigg(\frac{\lambda}{2}, w_{(v, v_m)} - h_\lambda(v_m, \geen, S - \{v\})\Bigg)\Bigg), \eeq
\beq\label{h-lambda-nn-bound} h_\lambda(v, \geen, S) \leq \min \Bigg(\frac{\lambda}{2}, \min_{1 \leq m \leq d} w_{(v, v_m)} \Bigg). \eeq
We now use the recursion \eqref{h-lambda-recurs} to construct the local approximations. With $\lambda$ implicit, we define functions $h^L_k, h^U_k$. Let $\btree$ be a rooted weighted tree of depth at most $k$. For vertices $v \in \btree$ at depth $k$, define
\[ h^L_k(v; \btree) := 0, ~h^U_k(v; \btree) := \frac{\lambda}{2} \]
if $k$ is even, and
\[ h^L_k(v; \btree) := \frac{\lambda}{2},~ h^U_k(v; \btree) := 0 \]
if $k$ is odd. For leaf vertices $v \in \btree$ at depth less than $k$, define
\[ h^L_k(v; \btree) = h^U_k(v; \btree) := \frac{\lambda}{2}. \]
For non-leaf vertices $v \in \btree$, define
\[ h^L_k(v; \btree) := \max\Bigg(0, \min_{u \in \mc{C}(v)} \Bigg(\frac{\lambda}{2}, \ell_u - h^L_k(u; \btree) \Bigg)\Bigg), \]
and
\[ h^U_k(v; \btree) := \max\Bigg(0, \min_{u \in \mc{C}(v)} \Bigg(\frac{\lambda}{2}, \ell_u - h^U_k(u; \btree) \Bigg)\Bigg). \]
Observe in particular we have
\beq\label{hlk-huk-easy-bound} 0 \leq h^L_k, h^U_k \leq \lambda / 2 \eeq

\begin{lemma}\label{h-lambda-bracket}
Suppose $B_k := B_k(v, \geen)$ is a tree. We have
\[h^L_k(v; B_k) \leq h_\lambda(v, \geen, V_n) \leq h^U_k(v; B_k). \]
Moreover, for any $u$ connected to $v$ in $\geen$, we have
\[ \begin{split}
\min(h^U_k(u; B_k), w_{(v, u)}) \leq h_\lambda(u, \geen, V_n - \{v\}) \leq \min(h^L_k(u; B_k), w_{(v, u)}). 
\end{split}\]
\end{lemma}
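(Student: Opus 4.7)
The plan is to prove the lemma by backward induction on the depth $j$ of a vertex inside the tree $B_k$, starting from the leaves at depth $k$ and working up to the root at depth $0$. To close the induction I would strengthen the hypothesis to cover all interior vertices of $B_k$, not just the root and its neighbors: for $u \in B_k$ at depth $j$, let $A(u)$ denote the set of ancestors of $u$ in $B_k$, $p(u)$ the parent of $u$, and set $\tilde{h}(u) := h_\lambda(u, \geen, V_n \setminus A(u))$. The strengthened claim is that $h^L_k(u; B_k) \le \tilde{h}(u) \le h^U_k(u; B_k)$ when $j = 0$; that $\min(h^U_k(u), w_{(u, p(u))}) \le \tilde{h}(u) \le \min(h^L_k(u), w_{(u, p(u))})$ when $j \ge 1$ is odd; and the analogous inequality with $h^L_k$ and $h^U_k$ swapped when $j \ge 1$ is even. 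The direction of the bracket alternates with $j$ because the recursion \eqref{h-lambda-recurs} is monotone decreasing in each child value. The first assertion of the lemma is the $j = 0$ case and the second is the $j = 1$ case.

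The base cases handle two types of leaves of $B_k$. For a leaf $u$ at depth $k$, which may have additional neighbors of $\geen$ lying outside $B_k$, I would combine the trivial bracket $0 \le \tilde{h}(u) \le \lambda/2$ from \eqref{h-lambda-interv-bound} with the bound $\tilde{h}(u) \le \min(\lambda/2, w_{(u, p(u))})$ coming from the parent-edge term inside the recursion; these match the boundary values $h^L_k(u), h^U_k(u) \in \{0, \lambda/2\}$ defined at depth $k$. For an internal leaf at depth $j < k$, the assumption that $B_k$ is a tree forces $u$ to have no neighbors in $\geen$ besides its parent, so \eqref{h-lambda-recurs} collapses to $\tilde{h}(u) = \min(\lambda/2, w_{(u, p(u))})$, which saturates the claimed bracket because $h^L_k(u) = h^U_k(u) = \lambda/2$ in this case.

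For the inductive step at a non-leaf vertex $u$ at depth $j < k$, the tree hypothesis forces the neighbors of $u$ in $\geen$ to be exactly $p(u)$ (if $j \ge 1$) together with the children $c$ of $u$ in $B_k$, so the recursion becomes $\tilde{h}(u) = \max(0, \min(P(u), C(u)))$ with $P(u) := \min(\lambda/2, w_{(u, p(u))})$ and $C(u) := \min_c(\lambda/2, w_{(u, c)} - \tilde{h}(c))$ (with the convention $P(u) := +\infty$ when $u$ is the root). Applying the inductive hypothesis to each $\tilde{h}(c)$ produces a bracket whose two sides are $\min(h^U_k(c), w_{(u,c)})$ and $\min(h^L_k(c), w_{(u,c)})$, in an order determined by the parity of $j+1$. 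The nested $\min(\cdot, w_{(u,c)})$ then collapses into the term $w_{(u, c)} - \tilde{h}(c)$ via the identities $\min(\lambda/2, \max(a, 0)) = \max(0, \min(\lambda/2, a))$ and $\min_c \max(0, A_c) = \max(0, \min_c A_c)$, yielding either $h^U_k(u) \le C(u) \le h^L_k(u)$ or its reverse. The parent contribution is then incorporated using the identity
\[
\max\bigl(0, \min(P(u), h)\bigr) \;=\; \min(h, w_{(u, p(u))}) \qquad \text{for all } h \in [0, \lambda/2],
\]
which produces the $\min(\cdot, w_{(u, p(u))})$ appearing on both sides of the claim at depth $j$.

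The main obstacle is the parent-edge term $P(u)$, which is present in the true recursion for $\tilde{h}(u)$ at every non-root interior vertex but is absent from the definition of $h^L_k(u), h^U_k(u)$. Reconciling this discrepancy is exactly what forces the $\min(\cdot, w_{(u, p(u))})$ into the lemma statement. The displayed identity above, verified by splitting into the cases $w_{(u, p(u))} \le \lambda/2$ and $w_{(u, p(u))} > \lambda/2$, is what absorbs the parent term cleanly on both sides of the bracket without degrading the bound. Once this algebraic bookkeeping is in place, the induction is a mechanical verification.
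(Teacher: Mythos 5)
Your proposal is correct and takes essentially the same approach as the paper, whose proof is the one-line ``the second inequality follows by induction on $k$''; you carry out that induction explicitly as a backward induction on depth in $B_k$, with a strengthened hypothesis that tracks the parity-dependent orientation of the $h^L_k / h^U_k$ bracket and the parent-edge truncation $\min(\cdot, w_{(u,p(u))})$. The two collapsing identities you single out are exactly the right ones, and the $\min(\cdot, w_{(u,c)})$ truncation built into the inductive hypothesis guarantees $C(u) \geq 0$, which is precisely what reconciles $C(u)$ with the outer $\max(0, \cdot)$ in the recursive definitions of $h^L_k, h^U_k$.
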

\begin{proof}
By \eqref{h-lambda-recurs}, the first inequality follows from the second inequality. The second inequality follows by induction on $k$.
\end{proof}

With $h^L_k, h^U_k$ defined, we could proceed (using \eqref{h-lambda-pert-ident}) to define the local approximations $LA_k^L, LA_k^U$. However, we decide to delay this to Section \ref{completing-proof-edge-cover}.

\subsection{Quantitative bound for the error in the local approximation}

To apply Theorem \ref{main-result}, we need to bound the error in local approximation (i.e. $\delta_k$ in (A3) of {\localprop}). With $\lambda$ implicit, let $\btree_\infty \stackrel{d}{=} \mbf{T}(\infty, \lambda, \text{Exp}(1))$, and let $\mbf{T}_k$ be the depth $k$ subtree of $\mbf{T}_\infty$. We write $h^L_k(\troot) := h^L_k(\troot; \btree_k)$, $h^U_k(\troot) := h^U_k(\troot; \btree_k)$ for brevity. One of the main terms in the error turns out to be $\E (h^U_k(\troot) - h^L_k(\troot))^2$.
Now the results of \cite{Wast2009} immediately imply that $\lim_{k \toinf} \E (h^U_k(\troot) - h^L_k(\troot))^2 = 0$ for fixed $\lambda > 0$, but we will need a more quantitative bound, due to the fact that we are trying to take $\lambda \toinf$ with $n$.

An inductive argument shows $h^L_k(\troot) \leq h^U_k(\troot)$ for all $k$, which implies
\beq\label{hbk-hak-sq-bd} \E (h^U_k(\troot) - h^L_k(\troot))^2 \leq \E (h^U_k(\troot))^2 - \E (h^L_k(\troot))^2, \eeq
which we will use later. 


\begin{prop}\label{decay-of-corr}
For $\lambda > 0$, we have
\[ \E (h^U_k(\troot) - h^L_k(\troot))^2 \leq C \lambda \alpha(\lambda)^k. \]
Here $\alpha(\lambda) < 1$, and even more, $\sup_{\lambda \geq \delta} \alpha(\lambda) < 1$ for all $\delta > 0$.
\end{prop}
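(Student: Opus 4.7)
The plan is to reduce the statement to a first--moment contraction and then convert to the squared quantity using \eqref{hbk-hak-sq-bd}. Since $h^U_k + h^L_k \leq \lambda$ and $h^U_k \geq h^L_k$,
\[ \E(h^U_k(\troot) - h^L_k(\troot))^2 \leq \E[(h^U_k(\troot) - h^L_k(\troot))(h^U_k(\troot) + h^L_k(\troot))] \leq \lambda \psi_k, \]
where $\psi_k := \E(h^U_k(\troot) - h^L_k(\troot))$. Thus it suffices to prove $\psi_k \leq C\alpha(\lambda)^k$ for a constant $C$ that does not depend on $\lambda$.

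To obtain the contraction $\psi_k \leq \alpha(\lambda) \psi_{k-1}$, I would exploit the Poisson structure of the GW tree. Once the alternation of boundary conditions between $h^L_k$ and $h^U_{k-1}$ is unfolded, the distributional recursion writes $h^L_k(\troot)$ as a function of i.i.d.\ copies of $h^U_{k-1}(\troot)$ on the children's subtrees, and symmetrically for $h^U_k$. A Poisson thinning computation then yields, for $x \in [0, \lambda/2)$,
\[ \p(h^L_k(\troot) > x) = \exp(-\lambda(1 - e^{-x}\,\E e^{-h^U_{k-1}(\troot)})), \]
and the analogous identity with $L,U$ swapped. Writing $A_k := \E e^{-h^U_k(\troot)}$, $B_k := \E e^{-h^L_k(\troot)}$, integrating the survival functions and applying the mean value theorem gives
\[ \psi_k \leq e^{-\lambda}\lambda(B_{k-1} - A_{k-1})\int_{e^{-\lambda/2}}^{1} e^{\lambda B_{k-1} u}\,du \leq \frac{e^{\lambda(B_{k-1} - 1)}}{B_{k-1}}\,\psi_{k-1}, \]
where the last step uses $B_{k-1} - A_{k-1} \leq \psi_{k-1}$ (by applying the mean value theorem to $x \mapsto e^{-x}$). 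This identifies the candidate contraction factor as $\alpha(\lambda, k-1) := e^{\lambda(B_{k-1} - 1)}/B_{k-1}$. A short analysis of the first one or two steps of the iteration absorbs the $\lambda$-dependence of $\psi_0 \leq \lambda/2$ into a universal $C$.

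The main obstacle is to establish $\sup_{\lambda \geq \delta}\alpha(\lambda) < 1$. The formula above gives $\alpha(\lambda) < 1$ iff $\lambda(1 - B_k) > -\ln B_k$, which under the naive bound $B_k \geq e^{-\lambda/2}$ fails for large $\lambda$. The fix is to use the replica--symmetry result of \cite{Wast2009} to identify the fixed--point Laplace transform $B^*(\lambda)$ of the iteration, and to show quantitatively that $\alpha$ evaluated at this fixed point remains bounded strictly below $1$ uniformly over $\lambda \geq \delta$. Turning the qualitative convergence $B_k \to B^*(\lambda)$ into such a uniform quantitative estimate is the crux of the argument.
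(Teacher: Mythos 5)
There is a genuine gap. Two issues.

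First, your Poisson thinning identity
\[
\p(h^L_k(\troot) > x) = \exp\!\big(-\lambda(1 - e^{-x}\,\E e^{-h^U_{k-1}(\troot)})\big)
\]
is the identity one would get for a Poisson($\lambda$) Galton--Watson tree with $\mathrm{Exp}(1)$ edge weights. It does not match the operator $V_\lambda$ that the paper establishes just before this proposition, namely $(V_\lambda F)(x) = e^{-E_\lambda(F)} e^{-x}$ with $E_\lambda(F) = \int_0^{\lambda/2}F$ --- that form arises from a rate-one Poisson process of edge weights on $[0,\lambda]$ (Poisson($\lambda$) children with $\mathrm{Unif}[0,\lambda]$ weights, which is the relevant $F_w$ for $EC_\lambda$). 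The paper's $V_\lambda$ makes the survival function separate as $c_k \cdot e^{-x}$, so the entire recursion collapses to a scalar recursion in the single number $E_\lambda(F_k)$; your Laplace-transform recursion in $B_k = \E e^{-h^L_k}$ does not close in the same way and leads to the difficulties you then run into.

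Second, and more fundamentally, you explicitly flag that you cannot establish $\sup_{\lambda \geq \delta}\alpha(\lambda) < 1$, calling this ``the crux of the argument.'' That is precisely the content of the proposition, so the argument as written is incomplete. Invoking the replica-symmetry convergence $B_k \to B^*(\lambda)$ qualitatively does not give a contraction rate bounded away from $1$ uniformly in $\lambda$; you would still need an explicit lower bound on the gap, which is what the proposition is asking for. The paper closes this gap by a short computation you have not found: from $(V_\lambda F)(x) = e^{-E_\lambda(F)}e^{-x}$, one derives the one-sided contractions $E_\lambda(G_{k+2}) - A_\lambda \le e^{-A_\lambda}(E_\lambda(G_k)-A_\lambda)$ and $A_\lambda - E_\lambda(F_{k+2}) \le e^{-A_\lambda}(A_\lambda - E_\lambda(F_k))$, where $A_\lambda$ is the fixed-point value solving $A_\lambda = e^{-A_\lambda}(1-e^{-\lambda/2})$. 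The contraction factor is therefore $e^{-A_\lambda}$ (per two steps), and the whole issue reduces to showing $A_\lambda$ is bounded below uniformly on $\lambda \ge \delta$. This follows because $A_\lambda$ is \emph{increasing} in $\lambda$ (so $A_\lambda \ge A_\delta > 0$) and bounded above by $A_\infty$ defined by $A_\infty = e^{-A_\infty}$. None of this requires replica symmetry. Your scalar reduction $\E(h^U_k-h^L_k)^2 \le \lambda\psi_k$ is fine and matches \eqref{hbk-hak-sq-bd} in spirit, but the central contraction estimate is missing.
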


\begin{remark}
The immediate consequence of this proposition is that if we take $\lambda_n = C_1 \log n$ (as in Proposition \ref{lambda-n-approx}), then upon taking $k_n = C_1' \log \lambda_n$ for some large enough $C_1'$, we have that $\E (h^U_{k_n}(\troot) - h^L_{k_n}(\troot))^2 \ra 0$.

One may think of this as an exponential delay of correlations result. In particular, $h^L_k(\troot), h^U_k(\troot)$ are defined by setting some initial conditions at the leaf vertices of $\mbf{T}_k$, and then recursively defining the values of $h^L_k, h^U_k$ for all non-leaf vertices. This proposition is essentially saying that the effect of the initial conditions is swept away exponentially quickly in the depth of the tree $\mbf{T}_k$. 
\end{remark}

To prove Proposition \ref{decay-of-corr}, we first need to establish the following relation between the distributions of $h^L_k, h^U_k$. For $\lambda > 0$, define the operator $V_\lambda$ on functions $F : [0, \lambda / 2] \ra [0, 1]$ as follows:
\[ (V_\lambda F)(x) := \exp(-\int_0^{\lambda / 2} F(\ell) d\ell) e^{-x}. \]
For notational purposes, define $E_\lambda(F) := \int_0^{\lambda / 2} F(\ell) d\ell$, so that $(V_\lambda F)(x) = e^{-E_\lambda(F)} e^{-x}$. For $k \geq 0$, $x \in [0, \lambda / 2]$, let
\[ F_k(x) := \p(h^L_k(\troot) \geq x), ~~ G_k(x) := \p(h^U_k(\troot) \geq x). \]

\begin{lemma}
We have
\[ F_{k+1} = V_\lambda G_k, \]
and
\[ G_{k+1} = V_\lambda F_k. \]
\end{lemma}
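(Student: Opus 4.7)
The plan is to first match boundary conditions so that $h^L_{k+1}$ at the root depends only on $h^U_k$ on child subtrees, then use the Galton--Watson Poisson structure to compute $F_{k+1}(x)$ by a void-probability argument. For the boundary matching, note that the non-leaf recursive formula is identical for $h^L$ and $h^U$, so the two can differ only through their values at the deepest vertices. For a child $u \in \mc{C}(\troot)$ in $\btree_{k+1}$, let $\btree_{k+1}^u$ be the subtree of depth at most $k$ rooted at $u$. Its deepest vertices sit at depth $k+1$ in $\btree_{k+1}$, where $h^L_{k+1}$ assigns $\lambda/2$ if $k+1$ is odd and $0$ if $k+1$ is even -- exactly the value $h^U_k$ uses at depth-$k$ leaves ($\lambda/2$ for $k$ even, $0$ for $k$ odd). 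The shared convention $\lambda/2$ for leaves at depth strictly less than the nominal depth also agrees. A simple induction upward from the leaves of $\btree_{k+1}^u$ then gives $h^L_{k+1}(u; \btree_{k+1}^u) = h^U_k(u; \btree_{k+1}^u)$.

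Applying the recursion at the root then yields $h^L_{k+1}(\troot; \btree_{k+1}) = \max(0, \min_{u \in \mc{C}(\troot)}(\lambda/2, \ell_u - Y_u))$, where $Y_u := h^U_k(u; \btree_{k+1}^u)$ are i.i.d.\ with tail $G_k$, independent of the edge weights $\ell_u$. For $x \in [0, \lambda/2]$ the inner $\min$ with $\lambda/2$ is redundant, so $\{h^L_{k+1}(\troot) \geq x\}$ coincides with $\{\ell_u \geq x + Y_u \text{ for every } u \in \mc{C}(\troot)\}$ (the empty-children case is automatic since $h^L_{k+1}(\troot) = \lambda/2$ by the leaf convention). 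By Poisson thinning of the $\mathrm{Poisson}(\lambda)$ offspring marked by independent $(\ell_u, Y_u)$ -- with $\ell_u$ uniform on $[0,\lambda]$ as in the local limit of $\kayn(\lambda)$ -- the set $\{(\ell_u, Y_u)\}$ forms a Poisson process on $[0, \lambda] \times [0, \lambda/2]$ with intensity $d\ell \cdot \mu_k(dy)$, where $\mu_k$ is the law of $h^U_k$. The void probability on $\{\ell < x + y\}$ then gives $F_{k+1}(x) = \exp\bigl(-\int_0^{\lambda/2}(x+y)\,\mu_k(dy)\bigr) = \exp(-x - \E[h^U_k]) = \exp(-x - E_\lambda(G_k)) = V_\lambda G_k(x)$, using $\E[h^U_k] = \int_0^{\lambda/2} G_k(t)\,dt = E_\lambda(G_k)$.

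The companion identity $G_{k+1} = V_\lambda F_k$ follows from the same argument with $L$ and $U$ swapped: the leaf values of $h^U_{k+1}$ at depth $k+1$ match those of $h^L_k$ at depth $k$, so $h^U_{k+1}(u; \btree_{k+1}^u) = h^L_k(u; \btree_{k+1}^u)$ on each child subtree, and the void-probability computation repeats verbatim with $\mu_k$ replaced by the law of $h^L_k$. The only subtle point is the parity bookkeeping for the boundary values in the first step; once that is correctly set up, the Poisson calculation in the second step is routine.
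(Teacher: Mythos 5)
Your proof is correct and fills in precisely the argument the paper defers to the literature: boundary-condition matching reduces $h^L_{k+1}$ at a child of the root to $h^U_k$ on the depth-$k$ subtree, and the $\mathrm{Poisson}(\lambda)$ offspring with $\mathrm{Unif}[0,\lambda]$ marks reduce the tail computation to a Poisson-process void probability, yielding $e^{-x}e^{-E_\lambda(G_k)}$. This is the same ``recursive properties plus Poisson process calculation'' route the paper names, and your note identifying the parity bookkeeping as the delicate step is apt; the only wrinkle left implicit (here and in the paper) is that the identity is to be read for $x>0$, since at $x=0$ one has $F_{k+1}(0)=1$ while $V_\lambda G_k(0)=e^{-E_\lambda(G_k)}$, reflecting the mass of $h^L_{k+1}$ at zero swallowed by the $\max(0,\cdot)$.
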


This result is implicit in Section 4 of \cite{Wast2009}. The proof takes advantage of the recursive properties of $\btree_k$, as well as the fact that the offpsring distribution is $\text{Poisson}(\lambda)$, to reduce to a Poisson process calculation. A detailed proof in the case of $\lambda$-diluted minimum matching (with more general edge cost distribution) is given in Section 2.7 of \cite{Wast2012}.

We now collect several simple facts about the operator $V_\lambda$.

\begin{lemma}
A fixed point of $V_\lambda$ is the function
\[F_\lambda (x) := e^{-A_\lambda} e^{-x}, \]
where $A_\lambda = E_\lambda (F_\lambda)$ satisfies
\[ A_\lambda = e^{-A_\lambda} (1 - e^{-\lambda / 2}). \]
\end{lemma}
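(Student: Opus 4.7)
The plan is direct verification: substitute the candidate into $V_\lambda$, observe that fixed-point equality reduces to the claimed scalar equation for $A_\lambda$, then check that this equation admits a solution.

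First I would plug $F_\lambda(x) = e^{-A_\lambda} e^{-x}$ into the definition. Since $V_\lambda$ factors as $(V_\lambda F)(x) = e^{-E_\lambda(F)} e^{-x}$, applying it to the candidate gives $(V_\lambda F_\lambda)(x) = e^{-E_\lambda(F_\lambda)} e^{-x}$. So $F_\lambda$ is a fixed point if and only if $A_\lambda = E_\lambda(F_\lambda)$. A one-line integral computation gives
\[ E_\lambda(F_\lambda) = \int_0^{\lambda/2} e^{-A_\lambda} e^{-\ell}\, d\ell = e^{-A_\lambda}\bigl(1 - e^{-\lambda/2}\bigr), \]
which matches the self-consistency equation stated in the lemma.

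Next I would verify that the scalar equation $A_\lambda = e^{-A_\lambda}(1 - e^{-\lambda/2})$ actually admits a solution, so that $F_\lambda$ is a well-defined function from $[0,\lambda/2]$ to $[0,1]$. Define $g : [0, \infty) \to \R$ by $g(a) := a - e^{-a}(1 - e^{-\lambda/2})$. Then $g(0) = -(1 - e^{-\lambda/2}) \leq 0$, $g(a) \to \infty$ as $a \to \infty$, and $g'(a) = 1 + e^{-a}(1 - e^{-\lambda/2}) > 0$. By the intermediate value theorem there is a unique $A_\lambda \geq 0$ with $g(A_\lambda) = 0$; in fact $A_\lambda \in [0, 1 - e^{-\lambda/2}] \subseteq [0,1)$, so $F_\lambda$ takes values in $[0,1]$ as required.

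I do not expect a real obstacle: the lemma is a bookkeeping check rather than a substantive result. The genuine work attached to $F_\lambda$ (that it governs the limit of $h^L_k, h^U_k$, and that convergence to it is exponentially fast, as used in Proposition \ref{decay-of-corr}) happens elsewhere; here one only has to exhibit the fixed point and record the equation defining $A_\lambda$.
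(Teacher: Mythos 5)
Your verification is correct and is exactly the direct computation this lemma calls for; the paper in fact omits a proof entirely, treating the claim as immediate. Your extra step of confirming (via the intermediate value theorem) that the self-consistency equation has a unique nonnegative solution, so that $F_\lambda$ is indeed a well-defined map from $[0,\lambda/2]$ to $[0,1]$, is a reasonable addition that the paper leaves implicit.
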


\begin{lemma}\label{v-lambda-anti-mono}
For functions $F \leq G$, we have $V_\lambda F \geq V_\lambda G$. As $F_0 \leq F_\lambda \leq G_0$, we then have $F_k \leq F_\lambda \leq G_k$ for all $k$. Moreover, $F_k \leq F_{k+1}$, $G_{k+1} \leq G_k$ for all $k$.
\end{lemma}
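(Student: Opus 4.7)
The plan is to establish the three claims in order, all of which hinge on the single observation that $V_\lambda F$ depends on $F$ only through the scalar $E_\lambda(F) = \int_0^{\lambda/2} F(\ell)\,d\ell$.

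For the anti-monotonicity, I would simply note that $F \leq G$ (pointwise, or even just Lebesgue-a.e.\ on $[0, \lambda/2]$) implies $E_\lambda(F) \leq E_\lambda(G)$, and hence $e^{-E_\lambda(F)} \geq e^{-E_\lambda(G)}$; multiplying by $e^{-x}$ gives $V_\lambda F \geq V_\lambda G$. This is a one-line verification.

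Next, for $F_k \leq F_\lambda \leq G_k$, I would use the fact that $F_\lambda$ is a fixed point of $V_\lambda$ and that $V_\lambda$ is order-reversing. The base case $F_0 \leq F_\lambda \leq G_0$ is immediate: $G_0 \equiv 1 \geq F_\lambda$, and $F_0$ vanishes on $(0, \lambda/2]$, so $F_0 \leq F_\lambda$ (a.e.). For the inductive step, assuming $F_k \leq F_\lambda$, applying $V_\lambda$ reverses the inequality to give $G_{k+1} = V_\lambda F_k \geq V_\lambda F_\lambda = F_\lambda$; similarly, $G_k \geq F_\lambda$ yields $F_{k+1} = V_\lambda G_k \leq V_\lambda F_\lambda = F_\lambda$.

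For the monotonicity $F_k \leq F_{k+1}$ and $G_{k+1} \leq G_k$, I would set up a coupled induction. The base $F_0 \leq F_1$ follows since $F_0 \equiv 0$ on $(0, \lambda/2]$ while $F_1 = V_\lambda G_0 = e^{-\lambda/2} e^{-x} \geq 0$; and $G_1 \leq G_0$ holds since $G_0 \equiv 1$ while $G_1 = V_\lambda F_0 = e^{-x} \leq 1$. For the inductive step, I would use anti-monotonicity of $V_\lambda$ combined with the recursions: if $F_{k-1} \leq F_k$ then $G_{k+1} = V_\lambda F_k \leq V_\lambda F_{k-1} = G_k$, and if $G_k \leq G_{k-1}$ then $F_{k+1} = V_\lambda G_k \geq V_\lambda G_{k-1} = F_k$. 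These two implications interleave to propagate both monotonicities simultaneously.

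I do not expect any serious obstacle. The only mild subtlety is the boundary point $x = 0$: one has $F_0(0) = \mathbb{P}(h^L_0 \geq 0) = 1$, which exceeds $F_\lambda(0) = e^{-A_\lambda}$, so the inequality $F_0 \leq F_\lambda$ fails pointwise there. This is harmless because $V_\lambda$ only sees $F$ through the integral $E_\lambda(F)$, so modifying a function on a set of Lebesgue measure zero does not affect any subsequent iterate, and all comparisons may be interpreted modulo null sets (equivalently, restricted to the interior $(0, \lambda/2]$).
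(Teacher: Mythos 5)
Your proposal is correct and coincides with the argued-for (but unwritten) proof in the paper: anti-monotonicity is a one-line consequence of the fact that $V_\lambda$ acts through $\exp(-E_\lambda(\cdot))$, and the sandwiching and monotone chains then follow by the standard interleaved inductions alternating $V_\lambda$ between $F_k$ and $G_k$. You were also right to flag and dismiss the measure-zero discrepancy at $x=0$ in the base case (it afflicts $F_0 \leq F_1$ as well as $F_0 \leq F_\lambda$, for the same reason and with the same innocuous resolution, since every iterate $F_k, G_k$ for $k \geq 1$ is of the form $c\, e^{-x}$ and hence the inequalities are genuinely pointwise from $k=1$ onward).
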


We now analyze the operator $V_\lambda$. In light of \eqref{hbk-hak-sq-bd}, Proposition \ref{decay-of-corr} is a consequence of the following slightly more general proposition.

\begin{prop}\label{v-lambda-exp-conv}
We have for $x \in [0, \lambda/2]$,
\[ \abs{F_\lambda(x) - F_k(x)} \leq C\lambda \alpha(\lambda)^k e^{-x}, \]
and
\[ \abs{G_k(x) - F_\lambda(x)} \leq C \alpha(\lambda)^k e^{-x}, \]
where $\alpha(\lambda) < 1$ for all $\lambda > 0$, and even more, $\sup_{\lambda \geq \delta} \alpha(\lambda) < 1$ for all $\delta > 0$.
\end{prop}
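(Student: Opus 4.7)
The plan is to reduce everything to a one-dimensional dynamical system. Since $V_\lambda$ sends every input to a multiple of $e^{-x}$, and $F_\lambda$ itself has this form, the lemma $F_{k+1} = V_\lambda G_k$, $G_{k+1} = V_\lambda F_k$ lets us write $F_k(x) = c_k e^{-x}$ and $G_k(x) = d_k e^{-x}$ for $k \geq 1$. With $\beta := 1 - e^{-\lambda/2}$ and $F(t) := e^{-\beta t}$, a direct calculation from the definition of $V_\lambda$ gives $c_{k+1} = F(d_k)$, $d_{k+1} = F(c_k)$, with fixed point $c := e^{-A_\lambda} = F(c)$. Setting $G := F \circ F$, one has $c_{k+2} = G(c_k)$ and $d_{k+2} = G(d_k)$, and the pointwise errors reduce to $|F_k(x) - F_\lambda(x)| = |c_k - c|\, e^{-x}$ and $|G_k(x) - F_\lambda(x)| = |d_k - c|\, e^{-x}$, so it suffices to prove $|c_k - c| \leq C \lambda\, \alpha(\lambda)^k$ and $|d_k - c| \leq C\, \alpha(\lambda)^k$.

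The core step is a uniform contraction estimate for $G$ on $[0, \infty)$. Differentiating, $G'(t) = \beta^2 F(t) G(t) > 0$ and $G''(t) = \beta^3 F(t) G(t)(\beta F(t) - 1) < 0$, so $G$ is increasing and concave, and hence its derivative attains its maximum at $0$, giving $\sup_{t \geq 0} G'(t) = \beta^2 e^{-\beta}$. The map $\beta \mapsto \beta^2 e^{-\beta}$ is increasing on $[0, 1]$ (derivative $\beta(2-\beta)e^{-\beta} \geq 0$) with maximum $e^{-1}$ at $\beta = 1$. Setting $\alpha(\lambda) := e^{-1/2}$ (or, if one wants a sharper rate for small $\lambda$, $\alpha(\lambda) := \max(\beta e^{-\beta/2}, e^{-1/2})$) yields $\alpha(\lambda)^2 \geq \sup_t G'(t)$, $\alpha(\lambda) < 1$ for every $\lambda > 0$, and $\sup_{\lambda \geq \delta} \alpha(\lambda) = e^{-1/2} < 1$ for every $\delta > 0$, as required. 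By the mean value theorem, $|c_{k+2} - c| \leq \alpha(\lambda)^2 |c_k - c|$ and the analogous estimate for $d$.

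To finish I would bound the two base cases and iterate. From $c_1 = e^{-\lambda/2}$, $c = e^{-A_\lambda}$, and $A_\lambda \leq \lambda/2$, we get $|c_1 - c| \leq c(\lambda/2 - A_\lambda) \leq \lambda/2$. From $c_2 = F(d_1) = F(1) = e^{-\beta}$ and the identity $A_\lambda = \beta c$, we get $|c_2 - c| \leq c(\beta - A_\lambda) \leq A_\lambda^2 \leq \min(1, \lambda^2/4) \leq \lambda$. On the $d$-side, $d_1 = 1$ and $d_2 = e^{-\beta(1-\beta)}$ both lie in $[e^{-1}, 1]$, so $|d_1 - c|, |d_2 - c| \leq 1$. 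Iterating the contraction $\lfloor (k-1)/2 \rfloor$ times from the appropriate base case and multiplying by $e^{-x}$ delivers the two stated inequalities with a universal constant $C$. The main subtlety is the asymmetric appearance of $\lambda$: it traces back to the starting value $b_0 = \lambda/2$ of the underlying $b$-iteration (producing the $\lambda$ in the $F_k$ bound), while the $a$-iteration begins at $a_0 = 0$ and contributes only a constant. The other subtlety is the insistence on uniformity in $\lambda$, which forces using $\sup_t G'(t)$ rather than the sharper $G'(c) = A_\lambda^2$ that one would obtain by linearizing at the fixed point.
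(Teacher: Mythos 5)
Your proof is correct and takes a different, arguably cleaner, route than the paper. Both arguments observe that $V_\lambda$ always produces a multiple of $e^{-x}$, which reduces the problem to tracking one scalar per side; the difference is in what scalar is tracked and how the contraction is obtained. The paper works with the integrals $E_\lambda(F_k),\,E_\lambda(G_k)$, uses the inequality $1-e^{-y}\le y$ to compare with $A_\lambda$, and along the way throws away a factor $e^{-E_\lambda(F_{k+1})}\le 1$, which leaves a contraction factor of $e^{-A_\lambda}$ per two steps. Since $A_\lambda\to 0$ as $\lambda\to 0$, that rate degenerates for small $\lambda$, and this is precisely why the paper only asserts $\sup_{\lambda\ge\delta}\alpha(\lambda)<1$. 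You instead track the multipliers $c_k,d_k$ directly, write out the one-dimensional map $F(t)=e^{-\beta t}$ with $\beta=1-e^{-\lambda/2}$ and its second iterate $G=F\circ F$, and bound the contraction by the honest derivative estimate $\sup_{t\ge 0}G'(t)=\beta^2 e^{-\beta}\le e^{-1}$, which is \emph{uniform} in $\lambda$. This gives a strictly stronger conclusion (one can take $\alpha\equiv e^{-1/2}$ rather than something degenerating as $\lambda\to 0$), and it makes visible that the extra factor of $\lambda$ in the $F_k$ bound simply comes from the base case $|c_1-c| = |e^{-\lambda/2}-e^{-A_\lambda}|\le\lambda/2$, whereas $d_1=1$ gives only an $O(1)$ base case. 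The only minor slips are cosmetic: the alternative $\alpha(\lambda):=\max(\beta e^{-\beta/2},e^{-1/2})$ you suggest is always $e^{-1/2}$ (since $\beta e^{-\beta/2}\le e^{-1/2}$ on $[0,1]$), so you presumably meant to take $\beta e^{-\beta/2}$ alone to capture the sharper small-$\lambda$ rate; and it is worth stating explicitly that $c_k,d_k\in(0,1]$ for $k\ge 1$ so the mean value theorem is applied on an interval where $G'\le\beta^2e^{-\beta}$. For the application in the paper (where $\lambda\to\infty$) both arguments serve equally well, since both yield $\limsup_{\lambda\to\infty}\alpha(\lambda)<1$.
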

\begin{proof}
To start, observe
\begin{align*} 
G_{k+2}(x) - F_\lambda(x) &= e^{- E_\lambda(F_{k+1}(x))} e^{-x} - e^{-A_\lambda} e^{-x} \\
&= e^{-(x + E_\lambda(F_{k+1}))} \bigg(1 - e^{-(A_\lambda - E_\lambda(F_{k+1}))}\bigg).
\intertext{As $F_{k+1} \leq F_\lambda$ by Lemma \ref{v-lambda-anti-mono}, we have $E_\lambda(F_{k+1}) \leq E_\lambda(F_\lambda) = A_\lambda$, and thus}
&\leq e^{-(x + E_\lambda(F_{k+1}))} (A_\lambda - E_\lambda(F_{k+1})).
\end{align*}
Integrating over $0 \leq x \leq \lambda / 2$, we obtain
\[ E_\lambda(G_{k+2}) - A_\lambda \leq e^{-E_\lambda(F_{k+1})} (A_\lambda - E_\lambda(F_{k+1})).\]
The same argument also shows
\[ A_\lambda - E_\lambda(F_{k+1}) \leq e^{-A_\lambda} (E_\lambda(G_k) - A_\lambda).  \]
Now as $E_\lambda(F_{k+1}) \geq E_\lambda(F_0) = 0$, combining the above two displays, we have
\[ E_\lambda(G_{k+2}) - A_\lambda \leq e^{-A_\lambda} (E_\lambda(G_k) - A_\lambda).\]
The same argument implies
\[ A_\lambda - E_\lambda(F_{k+2}) \leq e^{-A_\lambda} (A_\lambda - E_\lambda(F_k)). \]
Iterating these inequalities, we obtain
\[ E_\lambda(G_{2k}) - A_\lambda \leq e^{-kA_\lambda} (E_\lambda(G_0) - A_\lambda) \leq \lambda e^{-kA_\lambda}, \]
and
\[ A_\lambda - E_\lambda F_{2k} \leq e^{-kA_\lambda} (A_\lambda - E_\lambda(F_0)) = A_\lambda e^{-kA_\lambda}.\]
As $F_k \leq F_{k+1}$, and $G_{k+1} \leq G_k$, we obtain
\[ E_\lambda(G_k) - A_\lambda \leq \lambda e^{-\floor{k/2} A_\lambda}, \]
\[ A_\lambda - E_\lambda(F_k) \leq A_\lambda e^{-\floor{k/2} A_\lambda}. \]
Substituting back into our previously derived inequalities, and using Lemma \ref{v-lambda-anti-mono}, we obtain
\[ 0 \leq G_{k+1}(x) - F_\lambda(x) \leq A_\lambda e^{-\floor{k/2} A_\lambda} e^{-x}, \]
\[ 0 \leq F_\lambda(x) - F_{k+1}(x) \leq \lambda A_\lambda e^{-\floor{k/2} A_\lambda} e^{-x}. \]
To finish, observe that by using the definition of $A_\lambda$ and the intermediate value theorem, we have that $A_\lambda$ is increasing in $\lambda$. Thus for any $\delta > 0$, we have $\inf_{\lambda \geq \delta} A_\lambda = A_\delta > 0$. Observe also that $A_\lambda \leq A_\infty$, where $A_\infty$ satisfies $A_\infty = e^{-A_\infty}$.
\end{proof}

\subsection{Completing the proof of the central limit theorem}\label{completing-proof-edge-cover}

We now have all the pieces in place to deduce Theorem \ref{opt-edge-cover-clt} from Theorem \ref{main-result}. We first define the local approximations $LA_k^L, LA_k^U$. The main idea is to use \eqref{h-lambda-pert-ident}, and approximate $h_\lambda$ by $h^L_k$, $h^U_k$. Fix $e = (v, u)$, and let $B_k := B_k(v, \geen)$, $B_k' := B_k(v, \geen^e)$ for brevity. When $B_k, B_k'$ are both trees, we define
\[\begin{split}
LA_k^L(B_k, B_k')& := h^L_k(v; B_k) - h^U_k(v; B_k') ~+ \\
&1(b_e = 1, b_e' = 0) \bigg(\min(h^U_k(u; B_k), w_e) - h^L_k(u; B_k)\bigg) ~+\\
& 1(b_e = 0, b_e' = 1) \bigg(h^U_k(u; B_k') - \min(h^L_k(u; B_k'), w_e')\bigg) ~+ \\
&1(b_e = 1, b_e' = 1) \bigg(\min(h^U_k(u; B_k), w_e) - \min(h^L_k(u; B_k'), w_e') \bigg).
\end{split} \]
The function $LA_k^U$ is defined similarly, by swapping the roles of $h^L_k, h^U_k$. One may verify (A1) by using Lemma \ref{h-lambda-bracket}. By \eqref{h-lambda-interv-bound}, \eqref{h-lambda-pert-ident}, we may take $H(w_e, w_e') := 2\lambda$, so that $J = 64 \lambda^6$. 

The following lemma gives quantitative bounds on the numbers $(\delta_k, k \geq 1)$ which appear in {\localprop}.

\begin{lemma}\label{opt-edge-cover-local-approx-error-bound}
Let $\delta_k(\lambda)$ be defined as in (A3) of {\localprop}, for the optimization problem $EC_\lambda$. Then
\[ \delta_k(\lambda) \leq C \lambda \alpha(\lambda)^{k-1},\]
with $\alpha(\lambda) < 1$, and even more, $\sup_{\lambda \geq c} \alpha(\lambda) < 1$ for all $c > 0$.
\end{lemma}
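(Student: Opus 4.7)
The plan is to exploit the fact that $LA_k^U$ and $LA_k^L$ have identical structure, differing only by a swap $h_k^L \leftrightarrow h_k^U$, so that their difference decomposes into a small number of terms of the form $|h_k^U(x;\cdot) - h_k^L(x;\cdot)|$, each of which can then be controlled by Proposition \ref{decay-of-corr}. Working case by case on the values of $(b_e, b_e')$ in the definition of $LA_k^{L/U}$ from Section \ref{construct-local-approx}, and using the Lipschitz bound $|\min(a, w) - \min(b, w)| \leq |a - b|$, one obtains
\[
|LA_k^U(B_k, B_k') - LA_k^L(B_k, B_k')| \leq C \sum_{x \in \{v, u\}} \Bigl( |h_k^U(x; B_k) - h_k^L(x; B_k)|\, \mathbf{1}_{x \in B_k} + |h_k^U(x; B_k') - h_k^L(x; B_k')|\, \mathbf{1}_{x \in B_k'} \Bigr).
\]

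Next, I specialize to $(B_k, B_k') = (\tilde{\btree}_k, \btree_k)$ (the case $(\btree_k, \tilde{\btree}_k)$ is analogous). In $\tilde{\btree}_k$ both $v = \troot$ and $u = \troot'$ are present, while in $\btree_k$ only the root $\troot$ appears. Squaring, using $(a+b+c)^2 \leq 3(a^2+b^2+c^2)$, and taking expectations, the problem reduces to bounding three quantities:
\[
\E(h_k^U(\troot;\btree_k) - h_k^L(\troot;\btree_k))^2, \qquad \E(h_k^U(\troot;\tilde{\btree}_k) - h_k^L(\troot;\tilde{\btree}_k))^2, \qquad \E(h_k^U(\troot';\tilde{\btree}_k) - h_k^L(\troot';\tilde{\btree}_k))^2.
\]
The first is directly bounded by $C\lambda \alpha(\lambda)^k$ via Proposition \ref{decay-of-corr}. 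For the second, since the root of $\tilde{\btree}_k$ has offspring distribution $1 + \mathrm{Poisson}(\lambda)$ rather than $\mathrm{Poisson}(\lambda)$, I unwind one step of the recursion for $h_k^{L/U}$ at $\troot$: since the children's subtrees are either standard Galton-Watson trees of depth $k-1$ or the auxiliary $\btree'_{k-1}$, applying Proposition \ref{decay-of-corr} at depth $k-1$ gives $C\lambda \alpha(\lambda)^{k-1}$. For the third term, the subtree of $\tilde{\btree}_k$ rooted at $\troot'$ is exactly $\btree'_{k-1}$, and a careful parity check shows that evaluating $h_k^L, h_k^U$ at $\troot'$ on this depth-$(k-1)$ subtree is the same as evaluating $h_{k-1}^U, h_{k-1}^L$ (with $L \leftrightarrow U$ swapped, due to the way the alternating boundary initialization flips when the tree depth drops by one) on $\btree'_{k-1}$. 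Proposition \ref{decay-of-corr} at depth $k-1$ then gives $C\lambda \alpha(\lambda)^{k-1}$ for this term too.

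Combining the three contributions yields $\delta_k(\lambda) \leq C\lambda \alpha(\lambda)^{k-1}$, and the uniformity statement $\sup_{\lambda \geq c} \alpha(\lambda) < 1$ is inherited directly from the corresponding property in Proposition \ref{decay-of-corr}. The main obstacle I expect is the parity bookkeeping: one must verify carefully that the alternating depth-$k$ leaf initialization in the definition of $h_k^{L/U}$ interacts with the depth-$1$ shift (going from $\troot$ to $\troot'$) in precisely the right way, so that the difference $|h_k^U(\troot';\tilde{\btree}_k) - h_k^L(\troot';\tilde{\btree}_k)|$ genuinely reduces to a difference of the type handled by Proposition \ref{decay-of-corr} at depth $k-1$. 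Once this identification is justified, the remaining estimates are routine applications of Lipschitz continuity of $\min/\max$ and the Cauchy--Schwarz inequality.
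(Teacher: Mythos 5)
Your proposal is correct and takes essentially the same route as the paper: both reduce $\delta_k$ to the three quantities $\E(h^U_k - h^L_k)^2$ at $\troot$ in $\btree_k$, at $\troot$ in $\tilde{\btree}_k$, and at $\troot'$ in $\tilde{\btree}_k$, handle the first directly via Proposition~\ref{decay-of-corr}, handle the second by unrolling one step of the $\max/\min$ recursion at the root of $\tilde{\btree}_k$, and handle the third via the parity identity $h^L_k(\troot';\tilde{\btree}_k) = h^U_{k-1}(\troot';\btree'_{k-1})$. The only cosmetic difference is that the paper makes the second step explicit via the identity $h^U_k(\troot;\tilde{\btree}_k) = \max(0,\min(h^U_k(\troot;\btree_k),\,\ell - h^L_{k-1}(\troot';\btree'_{k-1})))$ (and its $L$ counterpart) followed by $1$-Lipschitz estimates, which pins down exactly which quantities are at depth $k$ on $\btree_k$ and which are at depth $k-1$ on $\btree'_{k-1}$.
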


Before we prove this lemma, we first show how Theorem \ref{opt-edge-cover-clt} follows.

\begin{proof}[Proof of Theorem \ref{opt-edge-cover-clt}]
Observe $p_n = 1 - e^{-\lambda / n} \leq \lambda / n$. To bound $d_{TV}(F_w^{(n)}, F_w)$, one may upper bound the $L^1$ distance between the densities of $F_w^{(n)}, F_w$. Here $F_w^{(n)}$ is the distribution of $n \text{Exp}(1)$ conditioned to lie in $[0, \lambda]$, and $F_w$ is $\mathrm{Unif}[0, \lambda]$. A calculation shows that the $L^1$ distance may be bounded by $C\lambda / n$. We thus have for fixed $\lambda$, the term $\varep_k(n)$ from Theorem \ref{main-result} may be bounded
\[ \varep_k(n) \leq \frac{C^k(\lambda + C)^{k+C}}{n^{1/3} \min(\lambda, 1)}. \]

Let $(\sigma_n^\lambda)^2 := \Var{EC_\lambda(\kayn)}$, $r_n^\lambda := n / (\sigma_n^\lambda)^2$, and
\[ Z_n^\lambda := \frac{EC_\lambda(\kayn) - \E EC_\lambda(\kayn)}{\sigma_n^\lambda} = \frac{EC_\lambda(\mbf{G}_n) - \E EC_\lambda(\mbf{G}_n)}{\sigma_n^\lambda}. \]
Using Lemma \ref{opt-edge-cover-local-approx-error-bound}, upon applying Theorem \ref{main-result} we obtain
\[\begin{split}
&\sup_{t \in \R} \abs{ \p(Z_n^\lambda \leq t) - \Phi(t)} \leq \\
&C (r_n^\lambda)^{1/2} \bigg[ \lambda^C \alpha(\lambda)^{(k-1)/8} + \frac{C^k(\lambda + C)^{k+C}}{n^{1/C} \min(\lambda, 1)} \bigg] +
C(r_n^\lambda)^{3/4} \frac{\lambda^C}{n^{1/4}}.
\end{split}\]
Now take $\lambda_n = C_1 \log n$ as in Proposition \ref{lambda-n-approx}, and take $k_n = C_1' \log \lambda_n$ for some $C_1'$ large enough depending on $C_1$, such that
\[ \lim_{n \toinf} \lambda_n^C \alpha(\lambda_n)^{(k_n - 1)/8} = 0.\]
Note as $k_n$ grows like $\log \log n$, we have that
\[ \lim_{n \toinf} \frac{C^{k_n}(\lambda_n + C)^{k_n + C}}{n^{1/C}} = 0.\]
Finally, by Lemma \ref{opt-edge-cover-var-lower-bd} and Proposition \ref{lambda-n-approx}, we have that 
\[ \limsup_n r_n^{\lambda_n} < \infty.\]
Upon combining these observations, we obtain
\[\lim_{n \toinf} \sup_{t \in \R} \abs{\p(Z_n^{\lambda_n} \leq t) - \Phi(t)} = 0. \]
Thus $Z_n^{\lambda_n} \stackrel{d}{\lra} N(0, 1)$, and by Proposition \ref{lambda-n-approx}, we can conclude
\[\frac{EC(\kayn) - \E EC(\kayn)}{\sqrt{\Var{\kayn}}} \stackrel{d}{\lra} N(0, 1). \qedhere \]
\end{proof}

\begin{proof}[Proof of Lemma \ref{opt-edge-cover-local-approx-error-bound}]
Recall the definitions of $\btree_k, \tilde{\btree}_k$ in (A3) of {\localprop}. We will show how to obtain the bound for the pair $(\tilde{\btree}_k, \btree_k)$. The case of $(\btree_k, \tilde{\btree}_k)$ will have the exact same proof.

Recall $\tilde{\btree}_k$ is constructed from $(\btree_k, \btree_{k-1}', \troot, \troot', \ell)$. Thus we have (noting that $h^L_k(\troot', \tilde{\btree}_k) = h^U_{k-1}(\troot', \btree_{k-1}')$)
\[ \begin{split}
LA_k^U(\tilde{\btree}_k, \btree_k) = h^U_k(\troot; \tilde{\btree}_k) &- h^L_k(\troot; \btree_k) ~+ \\
&\min(h^U_{k-1}(\troot', \btree_{k-1}'), \ell) - h^L_{k-1}(\troot', \btree_{k-1}').
\end{split}\]
Similarly,
\[ \begin{split}
LA_k^L(\btree, \btree') = h^L_k(\troot; \tilde{\btree}_k) &- h^U_k(\troot; \btree_k) ~+ \\
&\min(h^L_{k-1}(\troot', \btree_{k-1}'), \ell) - h^U_{k-1}(\troot', \btree_{k-1}').
\end{split} \]
Thus
\[ \begin{split}
(LA_k^U(\btree, \btree') - LA_k^L(\btree, \btree'))^2 \leq ~&C (h^U_k(\troot; \tilde{\btree}_k) - h^L_k(\troot; \tilde{\btree}_k))^2 +  ~\\
& C (h^U_k(\troot; \btree_k) - h^L_k(\troot; \btree_k))^2 ~+ \\
& C (h^U_{k-1}(\troot', \btree_{k-1}') - h^L_{k-1}(\troot', \btree_{k-1}'))^2.
\end{split}\]
Upon taking expectations, the last two terms in the right hand side above may be handled by Proposition \ref{opt-edge-cover-local-approx-error-bound}. To handle the first term, observe
\[ h^U_k(\troot, \tilde{\btree}_k) = \max\bigg(0, \min\bigg(h^U_k(\troot, \btree_k), \ell - h^L_{k-1}(\troot', \btree_{k-1}') \bigg)\bigg), \]
and similarly,
\[ h^L_k(\troot, \tilde{\btree}_k) = \max\bigg(0, \min\bigg(h^L_k(\troot, \btree_k), \ell - h^U_{k-1}(\troot', \btree_{k-1}')\bigg)\bigg).\]
Thus
\[\begin{split} 
(h^U_k(\troot, \tilde{\btree}_k) - h^L_k(\troot, \tilde{\btree}_k))^2 \leq~& (h^U_k(\troot, \btree_k) - h^L_k(\troot, \btree_k))^2 ~+\\
&(h^U_{k-1}(\troot', \btree_{k-1}') - h^L_{k-1}(\troot', \btree_{k-1}'))^2.
\end{split}\]
Upon applying Proposition \ref{opt-edge-cover-local-approx-error-bound}, we obtain
\[ \E (h^U_k(\troot, \tilde{\btree}_k) - h^L_k(\troot, \tilde{\btree}_k))^2 \leq C \lambda \alpha(\lambda)^{k-1}. \]
Collecting the previous results allows us to obtain
\[ \E (LA^U_k(\tilde{\btree}_k, \btree_k) - LA^L_k(\tilde{\btree}_k, \btree_k))^2 \leq C \lambda \alpha(\lambda)^{k-1},\]
as desired.
\end{proof}

\section{Proofs}\label{proofs-section}

In this section, we prove Corollary \ref{cor-main-result} and Theorem \ref{main-result}. The proof of Theorem \ref{main-result} will rely on certain facts about neighborhoods of Erd\H{o}s-R\'{e}nyi graphs, which are covered in Section \ref{er-graph-facts}.

\subsection{Proof of Corollary \texorpdfstring{\ref{cor-main-result}}{}}

Let $e = (v, u)$. When $B_k := B_k(v, \geen)$, $B_k' := B_k(v, \geen^e)$ are trees, define
\[ LA_k^L(B_k, B_k') := g^L_k(B_k) - g^U_k(B_k'), \]
and
\[ LA_k^U(B_k, B_k') := g^U_k(B_k) - g^L_k(B_k'). \]
We proceed to verify {\localprop}. (A1) follows by \eqref{add-vertex-bracket}. To show (A2), note if $(B_k, B_k') \cong (\btree, \btree')$, then $B_k \cong \btree$, $B_k' \cong \btree'$, and thus by \eqref{gk-label-invariant}, we have
\[ LA_k^L(\btree, \btree') =  g^L_k(\btree) - g^U_k(\btree') = g^L_k(B_k) - g^U_k(B_k') = LA_k^L(B_k, B_k'), \]
and similarly for $LA_k^U$. Finally, to show (A3), observe that for trees $\btree, \btree'$, we have
\[ |LA_k^U(\btree, \btree') - LA_k^L(\btree, \btree')| \leq |g^U_k(\btree) - g^L_k(\btree)| + |g^U_k(\btree') - g^L_k(\btree')|.\]
(A3) now follows by \eqref{gu-gl-conv-0}, \eqref{gu-gl-exp-tree-conv-0}. 

For $\varep_k(n)$ as defined in Theorem \ref{main-result}, since $np_n \ra \gendeg$ and $d_{TV}(F_w^{(n)}, F_w) \ra 0$, we have that for all $k$, $\lim_{n \toinf} \varep_k(n) = 0$. Upon applying Theorem \ref{main-result}, because we've assumed the variance lower bound, we obtain for all $k > 0$
\[ \limsup_{n \toinf} \sup_{t \in \R} \abs{\p(Z_n \leq t) - \Phi(t)} \leq C' \delta_k^{1/8}, \]
where $C'$ is some finite number which may depend on $(f, (\geen, n \geq 1))$, but not $k$, and $\delta_k$ is as in {\localprop}. To finish, take $k \toinf$. \qedsymbol

\subsection{Proof of Theorem \texorpdfstring{\ref{main-result}}{}}

Let $E_n := \{(i, j) : 1 \leq i < j \leq n\}$. Given $e = (i, j)$, let $\geen - e$ be the weighted graph obtained by deleting edge $e$ if it is present, else doing nothing. Similarly define $\geen - F$ for a set of edges $F \sse E_n$. Let $(W', B')$ be an independent copy of $(W, B)$. Recalling the definition of $\geen^e$, for a set $F \sse  E_n$, define $\geen^F$ to be the weighted graph obtained by using $w_e', b_e'$ instead of $w_e, b_e$ for $e \in F$.
For singleton sets $\{e\}$, we will write $\geen^{e}$ instead of $\geen^{\{e\}}$, and we will write $\geen^{F \cup e}$ instead of $\geen^{F \cup \{e\}}$. Recalling the definition of $\Delta_e f := f(\geen) - f(\geen^e)$, for $F \sse E_n \backslash \{e\}$ similarly let 
\[ \Delta_e f^F := f(\geen^F)  - f(\geen^{F \cup e}). \]
Recall $\sigma_n^2 := \mathrm{Var}(f(\geen))$, $\gendeg_n := n p_n$. The following is Corollary 3.2 of \cite{Ch2014}, adapted to our situation.

\begin{lemma}\label{gen-pert-normal-approx}
For each $e = (v, u)$, $e' = (v', u')$, let $c(e, e')$ be such that for all $F \sse E_n \backslash \{e\}$, $F' \sse E_n \backslash \{e'\}$, we have
\[ \frac{1}{\sigma_n^4} \mathrm{Cov}(\Delta_{e} f \Delta_{e} f^{F}, \Delta_{e'} f \Delta_{e'} f^{F'}) \leq c(e, e'). \]
Then
\[ \begin{split} 
\sup_{t \in \R} \abs{\p(Z_n \leq t) - \Phi(t)} \leq \sqrt{2} \Bigg(&\sum_{e, e' \in E_n} c(e, e') \Bigg)^{1/4} + \Bigg(\frac{1}{\sigma_n^3} \sum_{e \in E_n} \E \abs{\Delta_e f}^3 \Bigg)^{1/2}. 
\end{split}\]
\end{lemma}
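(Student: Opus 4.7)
The plan is to invoke Corollary 3.2 of Chatterjee's survey \cite{Ch2014} directly, since the lemma is explicitly framed as an adaptation of that result. Chatterjee's corollary provides a Kolmogorov-distance normal approximation bound for a standardized function of independent random variables, expressed in terms of covariances between pairs of single-coordinate perturbations (with possible further resampling indexed by an arbitrary subset of the remaining coordinates) together with a third absolute moment remainder. So the proof should reduce to identifying the right collection of independent coordinates and matching notation.

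The natural choice of independent coordinates is the family $\{(w_e, b_e) : e \in E_n\}$. By construction $W$ and $B$ are independent, and entries of each are i.i.d.\ across edges, so the pairs $(w_e, b_e)$ are mutually independent; the graph $\geen$ is a measurable function of this family, hence so is $f(\geen)$. Under this identification, the map $\geen \mapsto \geen^e$ is exactly the single-coordinate resampling that replaces $(w_e, b_e)$ by the independent copy $(w_e', b_e')$, and $\geen \mapsto \geen^F$ implements the joint resampling indexed by $F \sse E_n$. Consequently $\Delta_e f = f(\geen) - f(\geen^e)$ and $\Delta_e f^F = f(\geen^F) - f(\geen^{F \cup e})$ are precisely the perturbation differences appearing in Chatterjee's setup, with $\sigma_n^2$ playing the role of the variance. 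The hypothesis on $c(e, e')$ in our lemma is then exactly the pairwise covariance upper bound demanded by Corollary 3.2, uniform over all choices of the resampling sets.

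With these identifications, the conclusion transcribes verbatim: the $\sqrt{2}$ prefactor, the exponent $1/4$ on the double sum $\sum_{e,e'} c(e, e')$, and the exponent $1/2$ on the normalized third-moment sum all come directly from the cited corollary. The main obstacle here is not mathematical but bookkeeping: one must check that Chatterjee's indexing of independent coordinates (typically a one-dimensional list $1, \dots, N$) can be replaced by our edge indexing $E_n$ (which is a relabeling), that the hypothesis really is required to hold uniformly over all $F, F'$ rather than for a specific random choice, and that the normalization by $\sigma_n^4$ inside the covariance translates to the $1/4$-power on the outer sum as stated. All of these are routine verifications against \cite{Ch2014}, and once they are checked the lemma follows immediately by citation.
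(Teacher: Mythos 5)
Your proposal is exactly what the paper does: the paper offers no proof of this lemma beyond the statement that it is ``Corollary 3.2 of \cite{Ch2014}, adapted to our situation,'' and your identification of the independent coordinates as the pairs $(w_e, b_e)_{e \in E_n}$, with $\geen^e$ and $\geen^F$ realizing the single-coordinate and subset resamplings, is the correct translation of Chatterjee's setup. So this is a correct citation-based proof, essentially identical in approach to the paper's.
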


\begin{remark}
As we will see, the only terms that are nontrivial to bound are $c(e, e')$ for edges $e, e'$ with distinct vertices. This is Lemma \ref{c-e1e2-bound}, but the main work is done by Lemmas \ref{l-k-cov-bound} and \ref{remainder-lemma}. All other terms will be bounded by applying the assumed regularity conditions and using Cauchy-Schwarz or related inequalities.
\end{remark}

We begin by bounding the second term.

\begin{lemma}\label{second-term-bound}
We have
\[ \sum_{e \in E_n} \E \abs{\Delta_e f}^3 \leq  J^{1/2} n^2 p_n = J^{1/2} n\gendeg_n. \]
\end{lemma}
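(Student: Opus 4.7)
The plan is to simply combine the pointwise bound \eqref{f-regularity} with the moment assumption \eqref{H-moment}, using the independence of the edge indicators from the edge weights.

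First, I would cube the regularity bound \eqref{f-regularity}: since the indicator is idempotent, we get
\[ |\Delta_e f|^3 \leq \mathbf{1}(\max(b_e, b_e') = 1)\, H(w_e, w_e')^3. \]
Taking expectations and exploiting the fact that $(W, B)$ and $(W', B')$ are independent pairs with $W, B$ (respectively $W', B'$) independent of each other, the indicator depends only on $(b_e, b_e')$ while $H(w_e, w_e')$ depends only on $(w_e, w_e')$, so
\[ \E |\Delta_e f|^3 \leq \p(\max(b_e, b_e') = 1)\, \E H(w_e, w_e')^3. \]

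Next, $\p(\max(b_e, b_e') = 1) = 1 - (1-p_n)^2 \leq 2 p_n$, and by Jensen's inequality (or Cauchy--Schwarz applied to $H^3 \cdot 1$),
\[ \E H(w_e, w_e')^3 \leq \bigl(\E H(w_e, w_e')^6\bigr)^{1/2} \leq J^{1/2}, \]
where the last inequality uses the definition of $J$ in \eqref{H-moment}.

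Finally, summing over $e \in E_n$ and using $|E_n| = n(n-1)/2 \leq n^2/2$:
\[ \sum_{e \in E_n} \E |\Delta_e f|^3 \leq \frac{n^2}{2} \cdot 2 p_n \cdot J^{1/2} = J^{1/2} n^2 p_n = J^{1/2} n \gendeg_n. \]
This is a short, essentially mechanical calculation with no real obstacle; the only subtlety is remembering that the factor of $2$ from the union-type bound $\p(\max(b_e, b_e')=1) \leq 2p_n$ exactly cancels the factor of $1/2$ from $|E_n| \leq n^2/2$.
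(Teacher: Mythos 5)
Your proof is correct and is essentially the calculation the paper alludes to (the paper's proof simply says it follows from the same ingredients ``and a calculation''). The exact cancellation you point out between the $2$ in $\p(\max(b_e,b_e')=1)\leq 2p_n$ and the $1/2$ in $|E_n|\leq n^2/2$ is precisely why the bound comes out with no spurious constant.
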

\begin{proof}
This follows by conditions \eqref{f-regularity} and \eqref{H-moment}, the independence of $W, B$, and a calculation.
\end{proof}

\begin{lemma}\label{c-ee-bound}
For any edge $e$, we may take
\[ c(e, e) = \frac{1}{\sigma_n^4} C J^{2/3} p_n = \frac{1}{\sigma_n^4} \frac{C J^{2/3} \gendeg_n}{n}. \]
\end{lemma}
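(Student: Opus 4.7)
The plan is to bound the covariance
\[ \mathrm{Cov}(\Delta_e f \cdot \Delta_e f^F, \Delta_e f \cdot \Delta_e f^{F'}) \]
directly using the regularity condition \eqref{f-regularity}, independence of $W$ and $B$, and the moment bound \eqref{H-moment}. The key observation is that the regularity bound $|\Delta_e f| \leq 1(\max(b_e, b_e') = 1)\, H(w_e, w_e')$ is really a Lipschitz-type statement for $f$ with respect to changing the data at edge $e$, so it applies equally well to $\Delta_e f^F = f(\geen^F) - f(\geen^{F \cup e})$ for any $F \sse E_n \setminus \{e\}$, since $\geen^F$ and $\geen^{F \cup e}$ differ only at edge $e$ (and the weights/Bernoulli there are still $(w_e, b_e)$ vs.\ $(w_e', b_e')$).

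Granting this, every factor appearing in both products is bounded by the single nonnegative random variable
\[ Y := 1(\max(b_e, b_e') = 1)\, H(w_e, w_e'). \]
First I would apply the elementary inequality $|\mathrm{Cov}(X_1, X_2)| \leq \E|X_1 X_2| + \E|X_1|\, \E|X_2|$ with $X_1 = \Delta_e f \cdot \Delta_e f^F$ and $X_2 = \Delta_e f \cdot \Delta_e f^{F'}$, so both terms reduce to moments of $Y$. Then using the independence of $W$ and $B$ I would compute, for each $m \geq 1$,
\[ \E Y^m = \p(\max(b_e, b_e') = 1)\, \E H(w_e, w_e')^m \leq 2 p_n\, \E H(w_e, w_e')^m, \]
and control $\E H^m$ for $m = 2, 4$ using Lyapunov's inequality together with \eqref{H-moment}: $\E H^2 \leq J^{1/3}$ and $\E H^4 \leq J^{2/3}$.

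Assembling the pieces gives $\E|X_1 X_2| \leq \E Y^4 \leq 2 p_n J^{2/3}$ and $\E|X_1|\,\E|X_2| \leq (\E Y^2)^2 \leq 4 p_n^2 J^{2/3}$, so
\[ |\mathrm{Cov}(X_1, X_2)| \leq 2 p_n J^{2/3} + 4 p_n^2 J^{2/3} \leq C J^{2/3} p_n, \]
since $p_n \leq 1$. Dividing by $\sigma_n^4$ and substituting $p_n = \gendeg_n / n$ yields the claimed value of $c(e, e)$. The only potentially delicate point is the assertion that \eqref{f-regularity} applies to $\Delta_e f^F$ with the same $H$; but this is really built into how \eqref{f-regularity} is meant to be read (as a pointwise almost-sure bound on the change in $f$ when one edge's data is swapped, irrespective of the data on other edges), and no additional ingredient is needed.
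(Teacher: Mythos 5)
Your proof is correct and takes essentially the same approach as the paper's (which simply says ``applying \eqref{f-regularity} and \eqref{H-moment}''); you spell out the details: the bound \eqref{f-regularity} is a deterministic statement about changing the data at edge $e$ and therefore applies to $\Delta_e f^F$ for any $F \subseteq E_n \setminus \{e\}$, after which the shared indicator $1(\max(b_e,b_e')=1)$, independence of $W,B$, and Lyapunov's inequality applied to \eqref{H-moment} give the claimed $C J^{2/3} p_n / \sigma_n^4$.
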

\begin{proof}
For $F, F' \sse E_n \backslash \{e\}$, we want to bound
\[ \mathrm{Cov}(\Delta_e f \Delta_e f^F, \Delta_e f \Delta_e f^{F'}). \]
This may be done by applying \eqref{f-regularity} and \eqref{H-moment}.
\end{proof}

\begin{lemma}\label{share-one-vertex-bound}
Let $e = (v, u), e' = (v', u') \in E_n$ be edges which share exactly one vertex. We may take
\[ c(e, e') = \frac{1}{\sigma_n^4} C J^{2/3} p_n^2 = \frac{1}{\sigma_n^4} \frac{C J^{2/3} \gendeg_n^2}{n^2}. \]
\end{lemma}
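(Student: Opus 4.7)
The plan is to exploit the independence of the edge variables at distinct edges, combined with the regularity condition \eqref{f-regularity}. The crucial observation is that since $e$ and $e'$ are distinct, the quadruple $(w_e, w_e', b_e, b_e')$ is independent of $(w_{e'}, w_{e'}', b_{e'}, b_{e'}')$; sharing one vertex is irrelevant to this independence because our edge variables are indexed by edges, not incidences.

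First I would observe that since $F \subseteq E_n \setminus \{e\}$ does not involve the edge $e$, the perturbation argument that gives \eqref{f-regularity} for $\Delta_e f$ applies verbatim with $\geen$ replaced by $\geen^F$, so that
\[ \abs{\Delta_e f^F} \leq 1(\max(b_e, b_e') = 1) H(w_e, w_e'), \]
and the analogous statement holds for $\Delta_{e'} f^{F'}$. Setting $X := \Delta_e f \cdot \Delta_e f^F$ and $Y := \Delta_{e'} f \cdot \Delta_{e'} f^{F'}$, multiplying these bounds pairwise gives
\[ \abs{X} \leq 1(\max(b_e, b_e') = 1) H(w_e, w_e')^2, \qquad \abs{Y} \leq 1(\max(b_{e'}, b_{e'}') = 1) H(w_{e'}, w_{e'}')^2. \]

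Next I would use the triangle-type bound $|\mathrm{Cov}(X, Y)| \leq \E|XY| + \E|X| \E|Y|$. By the independence noted above, $\E|XY|$ factors as a product over $e$ and $e'$, and likewise $\E|X|\E|Y|$ already factors. In each case, the Bernoulli factor contributes
\[ \E\bigl[1(\max(b_e, b_e') = 1)\bigr] = 1 - (1-p_n)^2 \leq 2 p_n, \]
while the weight factor is bounded, via Lyapunov's inequality applied to the assumption $\E H(w_e, w_e')^6 \leq J$, by
\[ \E H(w_e, w_e')^2 \leq J^{1/3}. \]
Combining these and doing the same for $e'$ gives $\E|XY| \leq 4 J^{2/3} p_n^2$ and $\E|X|\E|Y| \leq 4 J^{2/3} p_n^2$, and so $|\mathrm{Cov}(X, Y)| \leq C J^{2/3} p_n^2$ uniformly in $F, F'$. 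Dividing by $\sigma_n^4$ and recalling $p_n = \gendeg_n / n$ yields the claimed value of $c(e, e')$.

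I do not expect any serious obstacle here; the only subtlety is confirming that $|\Delta_e f^F|$ obeys the same regularity bound as $|\Delta_e f|$, which reduces to the observation that $F$ does not touch $e$, so swapping $(w_e, b_e) \leftrightarrow (w_e', b_e')$ in $\geen^F$ is a bona fide single-edge perturbation covered by \eqref{f-regularity}.
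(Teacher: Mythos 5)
Your argument is correct, and it is exactly the approach the paper intends: the paper's proof is the single sentence ``apply \eqref{f-regularity} and \eqref{H-moment}, and proceed,'' and you have filled in precisely the right details---recognizing that the pointwise bound \eqref{f-regularity} applies equally to $\Delta_e f^F$, that the resulting envelope $1(\max(b_e,b_e')=1)H(w_e,w_e')^2$ over edge $e$ and its analogue over $e'$ are independent because distinct edges index disjoint variables (vertex-sharing is irrelevant), and that the Bernoulli factor contributes $1-(1-p_n)^2\le 2p_n$ per edge while Lyapunov gives $\E H^2\le J^{1/3}$.
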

\begin{proof}
Again, we apply \eqref{f-regularity} and \eqref{H-moment}, and proceed.
\end{proof}

The main work will be in bounding $c(e, e')$ for edges $e = (v, u)$, $e' = (v', u')$ with all distinct vertices. Let $E_0$ be the event that both $B_k(v, \geen), B_k(v, \geen^e)$ are trees. Let $A_e := \{(b_e, b_e') = (1, 0) \text{ or } (0, 1)\}$. Define
\[ \tilde{L}_k^{e} := 1_{E_0}1_{A_e} LA^L_k(B_k(v,  \geen), B_k(v, \geen^{e})). \]
Let $Q(x, y) := \max(\min(x, y), -y)$ (in words, $Q$ is truncation of $x$ at level $y$). Define
\[ L_k^e := Q(\tilde{L}_k^e, H(w_e, w_e')), \]
($``L"$ is for ``local"), so that
\beq\label{l-k-regularity} \abs{L_k^e} \leq 1_{A_e} H(w_e, w_e'). \eeq
Let
\[R_k^e := \Delta_e f - L_k^e. \]
Here $``R"$ is for ``remainder". Let $\tilde{A}_e := \{\max(b_e, b_e') = 1\}$, so that $\tilde{A}_e = A_e \cup \{b_e, b_e' = 1\}$. Observe by \eqref{f-regularity}, \eqref{l-k-regularity}, we have
\beq\label{r-k-regularity} \abs{R_k^e} \leq 21_{\tilde{A}_e} H(w_e, w_e'). \eeq

For $F \sse E_n \backslash \{e\}$, we may define $L_k^{F \cup e}$ by using $B_k(v, \geen^F), B_k(v, \geen^{F \cup e})$ in place of $B_k(v, \geen)$, $B_k(v, \geen^e)$. Then let
\[ R_k^{F \cup e} := \Delta_e f^F - L_k^{F \cup e}. \]
To bound $c(e, e')$, we need to upper bound
\beq\label{cov-lk-uk} \mathrm{Cov}\bigg( (R_k^{e}+ L_k^{e})(R_k^{F \cup e} + L_k^{F \cup e}), (R_k^{e'} + L_k^{e'})(R_k^{F' \cup e'} + L_k^{F' \cup e'}) \bigg) .\eeq
Upon expanding this covariance, we obtain 16 terms, one of which only involves local approximation quantities. We should think of local quantities as essentially independent, and so their covariance should be essentially 0. The following lemma makes this precise.

\begin{lemma}\label{l-k-cov-bound}
With $\rho_k(n)$ as in Theorem \ref{main-result}, we have
\[ \mathrm{Cov}(L_k^{e} L_k^{F \cup e}, L_k^{e'} L_k^{F' \cup e'}) \leq \locallemmabound. \]
\end{lemma}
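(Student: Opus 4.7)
The plan is to exploit the locality of the variables. The product $Y_1 := L_k^e L_k^{F \cup e}$ is, by construction, determined by the $k$-neighborhoods of $v$ in the four weighted graphs $\geen, \geen^e, \geen^F, \geen^{F \cup e}$ together with $(w_e, w_e', b_e, b_e')$; similarly $Y_2 := L_k^{e'} L_k^{F' \cup e'}$ is determined by the analogous data on the $v'$-side. When all eight of these $k$-neighborhoods are trees, and the four $v$-neighborhoods are vertex-disjoint from the four $v'$-neighborhoods, $Y_1$ and $Y_2$ depend on disjoint blocks of the i.i.d.\ collection $(W, W', B, B')$, hence they are independent and contribute nothing to the covariance. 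The entire covariance is therefore controlled by the probability of the complementary ``bad event,'' which for sparse Erd\H{o}s--R\'enyi graphs is governed by $\rho_k(n)$.

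I would first formalize a good event $D = D_v \cap D_{v'}$, where $D_v$ asserts that the four $k$-neighborhoods of $v$ are trees with vertex set disjoint from $\{v', u'\}$, and $D_{v'}$ is defined symmetrically (with the cross-disjointness clause folded into one side via a reveal-and-freeze BFS exploration). Since $\geen, \geen^e, \geen^F, \geen^{F \cup e}$ pairwise differ on at most $\abs{F} + 1$ edges, I would carry out the BFS in the single ``union'' graph whose edges are present whenever any of $b_{ij}, b_{ij}'$ equals $1$. This union graph is still sparse of average degree $O(\gendeg_n)$, so all four $k$-neighborhoods of $v$ are contained in its $k$-neighborhood of $v$, which can be explored by the usual BFS. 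The resulting freeze ensures $D_v$ and $D_{v'}$ lie in $\sigma$-algebras generated by disjoint blocks of $(W, W', B, B')$. Standard ER-graph estimates (Section \ref{er-graph-facts}) then yield $\p(D^c) \leq C \rho_k(n)$, using that typical $k$-neighborhoods have size at most $(\gendeg_n + C_0)^k$, that two distinct-root $k$-neighborhoods share a particular vertex with probability $O(1/n)$, and that the exponent inside $\rho_k(n)$ can absorb constants accounting for the four perturbations per side.

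Next, I decompose
\[ \mathrm{Cov}(Y_1, Y_2) = \mathrm{Cov}(Y_1 1_{D_v}, Y_2 1_{D_{v'}}) + \mathrm{Cov}(Y_1 1_{D_v^c}, Y_2 1_{D_{v'}}) + \mathrm{Cov}(Y_1, Y_2 1_{D_{v'}^c}). \]
The first term vanishes by independence of $Y_1 1_{D_v}$ and $Y_2 1_{D_{v'}}$. For the two error terms, I use the pointwise bound $\abs{L_k^e}, \abs{L_k^{F\cup e}} \leq 1_{A_e} H(w_e, w_e')$ (and the analogous bounds for $e'$) from \eqref{l-k-regularity}, together with the independence of the $w$-variables (which determine $H$) from the $b$-variables (which determine $A_e, A_{e'}, D_v, D_{v'}$) and the independence of $e$-data from $e'$-data. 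For instance,
\[ \E[\abs{Y_1 Y_2} 1_{D_{v'}^c}] \leq \E[1_{A_e} H(w_e, w_e')^2] \cdot \E[1_{A_{e'}} H(w_{e'}, w_{e'}')^2 1_{D_{v'}^c}] \leq C p_n^2 J^{2/3} \rho_k(n), \]
using $\p(A_e), \p(A_{e'}) \leq 2 p_n$, the moment bound $\E H^2 \leq J^{1/3}$ from H\"older, and the conditional estimate $\p(D_{v'}^c \mid A_{e'}) \leq C \rho_k(n)$ (conditioning on $A_{e'}$ only alters the law of $(b_{e'}, b_{e'}')$, shifting the neighborhood-exploration probabilities by at most a bounded multiplicative factor). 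The main obstacle will be executing the reveal-and-freeze step rigorously in the presence of the many simultaneous perturbations; once the union-graph reduction is in place, this reduces cleanly to the estimates of Section \ref{er-graph-facts}.
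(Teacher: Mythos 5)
The central step of your argument---that $\mathrm{Cov}(Y_1 1_{D_v}, Y_2 1_{D_{v'}}) = 0$ because the two factors ``depend on disjoint blocks of the randomness''---is not justified, and in fact is generally false. Even if the BFS explorations from $v$ and $v'$ turn out not to collide, the indicator events $1_{D_v}$ and $1_{D_{v'}}$ are avoidance events that depend on the global edge configuration: conditionally on the $v$-side exploration (which determines $Y_1 1_{D_v}$), the conditional expectation $\E[Y_2 1_{D_{v'}} \mid \text{$v$-side}]$ depends on the shape and size of the revealed $v$-neighborhood, since a larger revealed set makes $D_{v'}$ harder to achieve. This is precisely the well-known pitfall of arguing ``the explorations don't meet, hence independence''; the reveal-and-freeze framing doesn't dissolve it, because freezing one side makes the other side conditionally fresh, but not unconditionally independent of the frozen side. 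As stated, the first term of your decomposition does not vanish, and you have not accounted for the error it contributes.

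The paper circumvents exactly this difficulty by (i) first conditioning on $(S_e, S_{e'})$ so that the edge-level data at $e, e'$ (on which $L_k^e, L_k^{e'}$ directly depend) is handled separately from the neighborhood data, and (ii) for the conditional-covariance term, proving and iterating a level-by-level re-randomization coupling (Lemmas \ref{coupling-ind-lemma}, \ref{covariance-iteration}, \ref{nbd-joint-ind-complicated}, culminating in Lemma \ref{nbd-ind-complicated}): at each BFS layer, on the event that the two explorations have not yet met, the cross-edges between the two regions are replaced by \emph{fresh} independent Bernoulli variables, which makes the next layer genuinely conditionally independent, and then one bounds the probability that this re-randomization changed anything by the probability that any of those cross-edges was present. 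Accumulating those probabilities over $k$ levels is what produces $\rho_k(n)$. There is also a separate argument (Lemma \ref{l-k-cov-bound-first-term}) for the ``covariance of conditional means'' piece arising from the conditioning on $(S_e, S_{e'})$, which your single good-event decomposition does not cleanly isolate. If you want to make your approach rigorous, the re-randomization step of Lemma \ref{nbd-joint-ind-complicated} is the missing ingredient: you'd need to construct coupled surrogates $\tilde{Y}_1, \tilde{Y}_2$ that are genuinely independent and agree with $Y_1, Y_2$ outside a small-probability event, rather than asserting independence after the fact on a good event.
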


To not distract too much from the main thrust of the argument, we will defer the proof of this lemma to Section \ref{lk-loc-proof}. The other 15 terms which come from expanding \eqref{cov-lk-uk} all involve at least one remainder term.

\begin{lemma}\label{remainder-lemma}
With $\delta_k$ as in {\localprop} and $\varep_k(n), \rho_k(n)$ as in Theorem \ref{main-result}, any of the other 15 terms which come from expanding \eqref{cov-lk-uk} may be bounded by
 \[ \remainderbound.\]
\end{lemma}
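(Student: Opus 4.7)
The plan is to bound each of the 15 terms uniformly via Cauchy--Schwarz on the covariance, $|\mathrm{Cov}(A,B)| \leq \sqrt{\E A^2\, \E B^2}$, which reduces the task to estimating second moments of products $X_i X_j$ with $X_i \in \{L_k^\bullet, R_k^\bullet\}$ and at least one a remainder. The $p_n^2$ factor in the conclusion will emerge because the supports of the $e$-factors lie in $\tilde A_e$ (probability $\leq 2p_n$) and -- in the regime relevant to this lemma, namely $e, e'$ sharing no vertex -- the edge indicators at $e$ and $e'$ are independent, providing an independent $p_n$ contribution from the $e'$-factors. The $J$ factor will come from H\"older applied to the moment condition \eqref{H-moment} combined with the uniform $H$-bounds \eqref{l-k-regularity}, \eqref{r-k-regularity}. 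The $(\delta_k^{1/2} + \varep_k(n)^{1/4} + \rho_k(n)^{1/4})$ factor will be extracted from the fact that each remainder $R_k^e$ is small on the ``good'' event $A_e \cap E_0$, where $E_0 := \{B_k(v, \geen),\, B_k(v, \geen^e) \text{ are both trees}\}$.

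Quantitatively, the core estimate I would establish is a bound of the form
\[ \E (R_k^e)^2 \leq C J^{1/3}\, p_n \bigl(\delta_k + \varep_k(n)^{1/2} + \rho_k(n)^{1/2}\bigr) + C J^{1/3}\, p_n^2, \]
and its analogue for $\E (R_k^{F \cup e})^2$. To prove it, decompose $\tilde A_e = A_e \sqcup \{b_e = b_e' = 1\}$. On the second piece, the factor $1_{A_e}$ in $\tilde L_k^e$ vanishes, so $L_k^e = 0$ and $R_k^e = \Delta_e f$, which is bounded by $H$; independence of $(b_e, b_e')$ from $H$ yields the $p_n^2 J^{1/3}$ contribution. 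On $A_e$, split further by $E_0$. On $A_e \cap E_0$, (A1) gives $|R_k^e| \leq LA^U_k - LA^L_k$ (the truncation $Q$ in the definition of $L_k^e$ is harmless, since $\Delta_e f \in [LA^L_k, LA^U_k] \cap [-H, H]$). Then (A2), combined with the Galton--Watson coupling facts from Section \ref{er-graph-facts}, transfers the expectation to the pair $(\tilde{\btree}_k, \btree_k)$ or its swap (depending on which of $b_e, b_e'$ equals $1$), at a total-variation cost of order $\varep_k(n)$; finally (A3) bounds the resulting Galton--Watson second moment by $\delta_k$. On $A_e \cap E_0^c$, combine $\p(E_0^c \mid A_e) \leq \rho_k(n)$ (from Section \ref{er-graph-facts}) with $|R_k^e| \leq 2H$ and a H\"older step on $H$. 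The same argument applies verbatim to $R_k^{F \cup e}$: on $E_0$, the perturbation of edges in $F$ cannot reach the depth-$k$ neighborhood of $v$, so the same local approximations apply.

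Feeding this estimate into Cauchy--Schwarz, together with one H\"older step to absorb the companion $H$-factor arising from any $L_k^\bullet$-term, produces the claimed bound $C J p_n^2 (\delta_k^{1/2} + \varep_k(n)^{1/4} + \rho_k(n)^{1/4})$ for each of the 15 terms, after using the elementary fact that the residual $p_n$-contribution is dominated by $\rho_k(n)^{1/4}$ for large $n$ (which follows from the $C/n$ lower bound built into the definition of $\rho_k(n)$). The hard part will be the bookkeeping around the Galton--Watson coupling: distinguishing the two orientations $\{b_e = 1, b_e' = 0\}$ and $\{b_e = 0, b_e' = 1\}$ of $A_e$ and pairing each with the correct oriented tree pair in (A3), and carefully tracking the dependence between the edge weight $w_e$ (which, on $A_e$, appears as an edge weight inside the coupled GW tree) and the moment function $H = H(w_e, w_e')$ in the H\"older steps.
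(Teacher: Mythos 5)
Your top-level strategy of applying Cauchy--Schwarz to the covariance, $|\mathrm{Cov}(A,B)| \le \sqrt{\E A^2 \,\E B^2}$ with $A$ the $e$-factors and $B$ the $e'$-factors, loses a crucial power of $p_n$. Each of $\E A^2$ and $\E B^2$ carries only one localization indicator ($1_{\tilde A_e}$ and $1_{\tilde A_{e'}}$ respectively), so each is of order $p_n$, and Cauchy--Schwarz then yields $\sqrt{p_n \cdot p_n} = p_n$, not $p_n^2$. This is not a matter of bookkeeping: the target in Lemma~\ref{c-e1e2-bound} is $c(e,e') \asymp \frac{CJp_n^2}{\sigma_n^4}(\delta_k^{1/2}+\cdots)$, and with $\sim n^4$ pairs of vertex-disjoint edges and $\sigma_n^2 \asymp n$, a bound of order $p_n$ rather than $p_n^2$ makes $\sum_{e,e'} c(e,e')$ diverge like $n\lambda_n$, and Lemma~\ref{gen-pert-normal-approx} gives nothing. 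Your claimed core estimate $\E(R_k^e)^2 \lesssim J^{1/3} p_n(\delta_k + \cdots) + J^{1/3}p_n^2$ is itself reasonable, but it is linear in $p_n$ (the $p_n^2$ piece only captures the $\{b_e=b_e'=1\}$ event), and multiplying by $\E(X_3X_4)^2 \lesssim J^{2/3}p_n$ and taking a square root cannot produce $p_n^2$.

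The paper sidesteps this by not applying Cauchy--Schwarz across the covariance at all. Instead, it bounds $\E|R_k^e X_2 X_3 X_4|$ and $\E|R_k^e X_2|\cdot\E|X_3X_4|$ directly: first factor out $1_{\tilde A_e}1_{\tilde A_{e'}}\tilde H^3|R_k^e|$, split $\tilde A_e = A_e \cup \{b_e=b_e'=1\}$, then for the $A_e$-part condition \emph{jointly} on $(Y_e, Y_{e'})$. Because $1_{A_e}$ and $1_{\tilde A_{e'}}$ are measurable with respect to this conditioning, they come out together giving $\p(A_e)\p(\tilde A_{e'}) \lesssim p_n^2$; Cauchy--Schwarz is only applied \emph{inside} the conditional expectation to split $\tilde H^3$ from $|R_k^e|$, yielding a uniform-in-$(Y_e,Y_{e'})$ bound $CJ^{1/2}(\delta_k + \varep_k(n)^{1/2}+\rho_k(n)^{1/2})^{1/2}$. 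The product then comes out as $CJp_n^2(\delta_k^{1/2}+\varep_k(n)^{1/4}+\rho_k(n)^{1/4})$. To fix your proof, you would need to replace the top-level Cauchy--Schwarz with this direct-expectation-plus-joint-conditioning structure, keeping the two indicator events together before taking any square roots. (The remaining ingredients you identified -- the two orientations of $A_e$ matching the two tree pairs in (A3), the $E_0$ split, and the coupling from Section~\ref{er-graph-facts} -- are all used correctly and are in line with the paper.)
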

\begin{proof}
Note $J \geq 1$ by definition, so we may bound $J^{r} \leq J$, for $r \leq 1$. We will show how to bound a term like
\[ \mathrm{Cov}(R_k^{e} X_2, X_3 X_4), \]
where $X_2$ is either $R_k^{F \cup e}$ or $L_k^{F \cup e}$, $X_3$ is either $R_k^{e'}$ or $L_k^{e'}$, and $X_4$ is either $R_k^{F' \cup e'}$ or $L_k^{F' \cup e'}$. The other terms may be bounded in a similar manner. Define 
\[ \tilde{H} := \max(H(w_e, w_e'), H(w_{e'}, w_{e'}')). \]
By \eqref{l-k-regularity}, \eqref{r-k-regularity}, we have
\begin{align*} 
\abs{R_k^e X_2 X_3 X_4} &\leq C1_{\tilde{A}_e} 1_{\tilde{A}_{e'}} \tilde{H}^3 \abs{R_k^e} \\
&\leq C 1_{A_e} 1_{\tilde{A}_{e'}} \tilde{H}^3 \abs{R_k^e} + C 1(b_e, b_e' = 1) 1_{\tilde{A}_{e'}} \tilde{H}^4.
\end{align*}
By the independence of $W, B, B'$, we have
\[\E [1(b_e, b_e' = 1) 1_{\tilde{A}_{e'}} \tilde{H}^4] \leq C \E \tilde{H}^4 p_n^3 \leq C J^{2/3} p_n^3. \]
As $p_n = \lambda_n / n \leq \rho_k(n)$, we are done with this term. Moving on to the other term, let $Y_e := (b_e, b_e')$. By Cauchy-Schwarz and the independence of $W, B$, we have
\[ \E [\tilde{H}^3 \abs{R_k^e} ~|~ Y_e, Y_{e'}] \leq CJ^{1/2} (\E [(R_k^e)^2 ~|~ Y_e, Y_{e'}])^{1/2}.\]
For brevity, write $B_k, B_k'$ instead of $B_k(v, \geen)$, $B_k(v, \geen^e)$. By (A1), \eqref{l-k-regularity}, we have
\[ 1_{A_e}\abs{R_k^e} \leq 1_{E_0}1_{A_e} (LA_k^U(B_k, B_k') - LA_k^L(B_k, B_k')) + C \tilde{H} 1_{E_0^c}.  \]
Thus
\beq\label{rke-cond-exp} \begin{split}
1_{A_e}\E [ (R_k^e)^2 ~|~ Y_e, Y_{e'}] \leq  21_{A_e}\E[ 1_{E_0}& (LA_k^U(B_k, B_k') - LA_k^L(B_k, B_k'))^2 ~|~ Y_e, Y_{e'}] \\
&+ C J^{1/3} (\p(E_0^c ~|~ Y_e, Y_{e'}))^{1/2}. 
\end{split}\eeq
We may bound
\[ \p(E_0^c ~|~ Y_e, Y_{e'}) \leq \p(B_k \text{ not a tree} ~|~ b_e, b_{e'}) + \p(B_k' \text{ not a tree} ~|~ b_e', b_{e'}). \]
We may bound
\[ \p(B_k \text{ not a tree} ~|~ b_e, b_{e'}) \leq \min\bigg(\frac{(\gendeg_n + C)^{2k+C}}{n}, 1\bigg), \]
and similarly for $B_k'$. This may be done by noting that if $B_k$ is not a tree, then either $B_k(v, \geen - e)$ is not a tree, or $B_{k-1}(u, \geen - e)$ is not a tree, or $B_k(v, \geen - e), B_{k-1}(u, \geen - e)$ intersect. To remove the conditioning, we may show that $B_k(v, \geen - e) = B_k(v, \geen - \{e, e'\})$ and $B_{k-1}(u, \geen - e) = B_{k-1}(u, \geen - \{e, e'\})$ with very high probability, even conditional on $b_e, b_{e'}$. Then finish by Lemma \ref{nbd-tree-prob}.

Moving to bound the other term in \eqref{rke-cond-exp}, we apply Lemma \ref{nbd-perturb-joint-coupling} to couple $(B_k, B_k', \btree, \btree')$. Observe by Lemma \ref{nbd-joint-coupling}, we may take $\varep_k(n)$ defined in Lemma \ref{nbd-perturb-joint-coupling} to be exactly the $\varep_k(n)$ that is given in the statement of Theorem \ref{main-result}. Let $E_1 := \{(B_k, B_k') \cong (\btree, \btree')\}$. By (A2), (A3) of {\localprop}, we have
\[\begin{split} 
1_{A_e}\E[ 1_{E_0} (LA_k^U&(B_k, B_k') - LA_k^L(B_k, B_k'))^2 ~|~ Y_e, Y_{e'}] \leq \delta_k ~+ \\
&C J^{1/3} \bigg(\jointcouplingprob\bigg)^{1/2}.  
\end{split}\]
We have thus bounded $\E \abs{R_k^e X_2 X_3 X_4}$. Note the term $d_{TV}(F_w^{(n)}, F_w)$ may be absorbed into $\varep_k(n)$, and $(\lambda_n + 1)^{2k} / n$ may be bounded by $\rho_k(n)$ (we may take min of $(\lambda_n + 1)^{2k} / n$ with 1 since this term comes from bounding a probability). The term $\E R_k^e X_2 \E X_3 X_4$ may be similarly bounded.
\end{proof}

We now collect Lemmas \ref{l-k-cov-bound}, \ref{remainder-lemma} into the following lemma. Here we also use the fact that by definition, $\rho_k(n) \leq 1$, so that $\rho_k(n) \leq \rho_k(n)^{1/4}$.

\begin{lemma}\label{c-e1e2-bound}
For edges $e, e'$ using all distinct vertices, we may take
\[ \begin{split}
 c(e, e') = \frac{C J p_n^2}{\sigma_n^4} \bigg(\delta_k^{1/2} + \varep_k(n)^{1/4} + \rho_k(n)^{1/4}\bigg).
 \end{split}\]
\end{lemma}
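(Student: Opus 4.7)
The plan is to simply collect the two preceding lemmas (Lemma \ref{l-k-cov-bound} and Lemma \ref{remainder-lemma}) into a single bound on the quantity
\[ \frac{1}{\sigma_n^4}\mathrm{Cov}\bigg((R_k^{e} + L_k^{e})(R_k^{F \cup e} + L_k^{F \cup e}),~ (R_k^{e'} + L_k^{e'})(R_k^{F' \cup e'} + L_k^{F' \cup e'})\bigg), \]
which, since $\Delta_e f = L_k^e + R_k^e$ (and likewise for the perturbed and primed versions), is exactly the expression that $c(e, e')$ must upper bound uniformly in $F, F'$.

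The first step is to expand each of the two products inside the covariance into a sum of four terms. Bilinearity of covariance then produces $16$ covariance terms. Exactly one of them, $\mathrm{Cov}(L_k^{e} L_k^{F \cup e}, L_k^{e'} L_k^{F' \cup e'})$, involves only local approximations; this is the term handled by Lemma \ref{l-k-cov-bound}, giving a contribution of order $J^{2/3} p_n^2 \rho_k(n)$. Each of the remaining $15$ terms involves at least one remainder factor $R_k^{(\cdot)}$, and so by Lemma \ref{remainder-lemma} each is bounded by a quantity of order $J p_n^2(\delta_k^{1/2} + \varep_k(n)^{1/4} + \rho_k(n)^{1/4})$.

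Summing the $16$ contributions and dividing by $\sigma_n^4$ yields an upper bound
\[ \frac{C J p_n^2}{\sigma_n^4}\bigg(\delta_k^{1/2} + \varep_k(n)^{1/4} + \rho_k(n)^{1/4}\bigg) + \frac{C J^{2/3} p_n^2 \rho_k(n)}{\sigma_n^4}. \]
The only cosmetic step is to absorb the pure-local term into the remainder-type bound: since by definition $\rho_k(n) \in [0, 1]$, we have $\rho_k(n) \le \rho_k(n)^{1/4}$, and $J^{2/3} \le J$ because $J \ge 1$ by \eqref{H-moment}. Thus the local contribution is dominated by the $\rho_k(n)^{1/4}$ term in the remainder bound, and we may take $c(e, e')$ to be exactly the expression stated. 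Since all the work has been done in Lemmas \ref{l-k-cov-bound} and \ref{remainder-lemma}, there is no substantive obstacle here; the proof is a bookkeeping step that arranges the two bounds into the form most convenient for the application of Lemma \ref{gen-pert-normal-approx} in the sequel.
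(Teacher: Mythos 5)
Your proof is correct and matches the paper's argument exactly: both expand the covariance into $16$ terms, bound the one purely local term via Lemma~\ref{l-k-cov-bound} and the other $15$ via Lemma~\ref{remainder-lemma}, and absorb the local contribution using $\rho_k(n) \leq \rho_k(n)^{1/4}$ (since $\rho_k(n) \leq 1$) together with $J^{2/3} \leq J$ (since $J \geq 1$). This is just the bookkeeping step the paper intends, and you carried it out cleanly.
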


\begin{proof}[Proof of Theorem \eqref{main-result}]
Combine Lemmas \eqref{gen-pert-normal-approx}, \eqref{second-term-bound}, \eqref{c-ee-bound}, \eqref{share-one-vertex-bound}, \eqref{c-e1e2-bound}.
\end{proof}

\subsection{Proof of Lemma \ref{l-k-cov-bound}}\label{lk-loc-proof}

First, we set some notation. Define $S_e := (w_e, b_e, w_e', b_e')$, $A_e := \{(b_e, b_e') = (1, 0) \text{ or } (0, 1)\}$. Let $X_e := L_k^e L_k^{F \cup e}$, and $X_{e'} := L_k^{e'} L_k^{F' \cup e'}$. By definition of $L_k^e$, note that $X_e = 1_{A_e} X_e$, and similarly for $X_{e'}$. We may write
\[\begin{split}
\mathrm{Cov}(X_e, X_{e'}) = \mathrm{Cov}(1_{A_e}\E (X_e ~|~ S_e, S_{e'}), ~&1_{A_{e'}}\E (X_{e'} ~|~ S_e, S_{e'})) ~+ \\
& \E 1_{A_e} 1_{A_{e'}}\mathrm{Cov}(X_e, X_{e'} ~|~ S_e, S_{e'}). 
\end{split}\]

To prove Lemma \ref{l-k-cov-bound}, we prove the following two lemmas.

\begin{lemma}\label{l-k-cov-bound-first-term}
We have
\[\mathrm{Cov}(1_{A_e}\E [X_e ~|~ S_e, S_{e'}], ~1_{A_{e'}}\E [X_{e'} ~|~ S_e, S_{e'}]) \leq C J^{2/3}p_n^2 \frac{(\lambda_n + 1)^k}{n}.  \]
\end{lemma}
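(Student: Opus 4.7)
\medskip
\noindent\textbf{Proof proposal.}
The strategy is to exploit the locality of $X_e$ and $X_{e'}$: $X_e = L_k^e L_k^{F \cup e}$ is a function of $S_e$ and the edge-level data within the $k$-neighborhood of $v$ in the super-graph $\mc{G}^* := \geen \cup \geen^e \cup \geen^F \cup \geen^{F \cup e}$, while $X_{e'}$ is analogously a function of $S_{e'}$ and the data within the $k$-neighborhood of $v'$ in $\mc{G}^{**}$. Since the four vertices $v, u, v', u'$ are distinct and the graphs are sparse, these two neighborhoods are vertex-disjoint with high probability, and on that event $X_e$ and $X_{e'}$ decouple.

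Let $D$ be the event that $B_k(v, \mc{G}^*)$ and $B_k(v', \mc{G}^{**})$ have disjoint vertex sets. Each edge in $\mc{G}^*$ is present with probability at most $2 p_n$ (since it may arise from either $B$ or $B'$), so a path-counting argument of the type used in Section \ref{er-graph-facts} gives $\E |B_k(v, \mc{G}^*)| \leq C(2 \gendeg_n + 1)^k$, whence $\p(v' \in B_k(v, \mc{G}^*)) \leq C(\gendeg_n + 1)^k / n$ for each fixed $v' \neq v$. Conditioning on $A_e \cap A_{e'}$ only forces the two specific edges $e, e'$ to be present in the super-graphs and does not alter this scaling, so combining with $\p(A_e \cap A_{e'}) \leq 4 p_n^2$ yields
\[\p(D^c \cap A_e \cap A_{e'}) \leq C p_n^2 (\gendeg_n + 1)^k / n.\]

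To convert this probability bound into the desired covariance bound, I would introduce an independent copy $(\tilde W, \tilde B, \tilde W', \tilde B')$ of $(W, B, W', B')$ and construct $\hat X_e$ by using the original data at edges lying inside $B_k(v, \mc{G}^*)$ and tilde-data elsewhere, and $\hat X_{e'}$ symmetrically using the original data inside $B_k(v', \mc{G}^{**})$ and tilde-data elsewhere. A measurability argument (first conditioning on the graph structure of the two super-graph neighborhoods, then invoking independence of edge data across disjoint edge subsets of $[n]$) shows that $\hat X_e \stackrel{d}{=} X_e$, $\hat X_{e'} \stackrel{d}{=} X_{e'}$, the pair $(\hat X_e, \hat X_{e'})$ is independent, and $(\hat X_e, \hat X_{e'}) = (X_e, X_{e'})$ on $D$. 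Setting $\hat Y_e := \E[\hat X_e | S_e, S_{e'}]$ and $\hat Y_{e'}$ analogously, the independence of $(\hat X_e, \hat X_{e'})$ gives $\mathrm{Cov}(\hat Y_e, \hat Y_{e'}) = 0$, and matching marginals $\E \hat Y_e = \E Y_e$, $\E \hat Y_{e'} = \E Y_{e'}$, yield
\[|\mathrm{Cov}(Y_e, Y_{e'})| = |\E(Y_e Y_{e'} - \hat Y_e \hat Y_{e'})| \leq \E 1_{D^c}(|Y_e Y_{e'}| + |\hat Y_e \hat Y_{e'}|).\]
Using $|Y_e|, |\hat Y_e| \leq 1_{A_e} H(w_e, w_e')^2$ (and the analogous bounds for the primed versions), the moment estimate $\E H(w_e, w_e')^4 \leq J^{2/3}$ (by Jensen from $\E H^6 \leq J$), and the bound on $\p(D^c \cap A_e \cap A_{e'})$ above, Cauchy-Schwarz then produces the claimed estimate.

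I expect the main obstacle to be the resampling construction of $(\hat X_e, \hat X_{e'})$: since $B_k(v, \mc{G}^*)$ is itself a random set, verifying $\hat X_e \stackrel{d}{=} X_e$ and the independence of the pair requires a careful exposure-style argument (e.g., revealing the neighborhoods by breadth-first search from $v$ and $v'$ and then appealing to the independence of the untouched edge data). Once this coupling is in place, all subsequent steps reduce to routine moment estimates and applications of Cauchy-Schwarz.
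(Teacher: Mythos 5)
The paper takes a cleaner and more direct route: it replaces $X_e$ by $\tilde X_e$, defined as the same functional evaluated on $(B_k(v, \geen - e'), B_k(v, \geen^e - e'), B_k(v, \geen^F - e'), B_k(v, \geen^{F\cup e} - e'))$, so that $\tilde X_e$ literally never reads the edge data at slot $e'$ and is therefore \emph{independent of $S_{e'}$}. Consequently $\tilde Z_e := \E[\tilde X_e \mid S_e, S_{e'}] = \E[\tilde X_e \mid S_e]$ is $S_e$-measurable, $\tilde Z_{e'}$ is $S_{e'}$-measurable, and $\mathrm{Cov}(1_{A_e}\tilde Z_e, 1_{A_{e'}}\tilde Z_{e'}) = 0$ follows simply from $S_e \perp S_{e'}$. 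The rest is then a telescoping into three cross terms bounded via $\p(X_e \neq \tilde X_e \mid S_e, S_{e'}) \leq C(\gendeg_n+1)^k/n$, with none of the exposure machinery needed.

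Your proposal has a genuine gap at the step ``the independence of $(\hat X_e, \hat X_{e'})$ gives $\mathrm{Cov}(\hat Y_e, \hat Y_{e'}) = 0$.'' This implication is false: pairwise independence of two random variables does not make their conditional expectations given a common $\sigma$-algebra uncorrelated. A simple counterexample: let $Z, W$ be i.i.d.\ $N(0,1)$ and set $X = Z+W$, $Y = Z-W$; then $X \perp Y$, but $\E[X\mid Z] = \E[Y\mid Z] = Z$, so $\mathrm{Cov}(\E[X\mid Z], \E[Y\mid Z]) = 1 \neq 0$. The condition that actually makes the cross-covariance vanish is the stronger pair of statements $\hat X_e \perp S_{e'}$ and $\hat X_{e'} \perp S_e$: then conditioning on $(S_e, S_{e'})$ reduces to conditioning on $S_e$ alone (resp.\ $S_{e'}$ alone), the two conditional expectations sit in independent $\sigma$-algebras, and the covariance is zero. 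Your resampling construction does not obviously deliver this: the boundary of the ``original data'' zone, namely $B_k(v, \mc{G}^*)$, is itself a random set whose shape depends on the raw edge data at $e'$ (through whether $v'$ or $u'$ enters the BFS exploration), so $\hat X_e$ can still be a nontrivial function of $S_{e'}$. Carrying out the exposure argument properly so that the dependence on $S_{e'}$ is demonstrably removed is exactly the sort of delicate busywork that the paper's deterministic deletion of the single slot $e'$ avoids entirely. If you want to salvage your route, you should engineer $\hat X_e$ so that the slot $e'$ is \emph{always} in the resampled region regardless of the realization---which after simplification collapses back to the paper's ``delete $e'$'' modification.
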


\begin{lemma}\label{l-k-cov-bound-second-term}
We have
\[ \E 1_{A_e} 1_{A_{e'}}\mathrm{Cov}(X_e, X_{e'} ~|~ S_e, S_{e'}) \leq C J^{2/3} p_n^2 \rho_k(n).\]
\end{lemma}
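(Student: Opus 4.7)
The plan is to exploit that $X_e = L_k^e L_k^{F \cup e}$ is determined by the local structure of the weighted graph around $v$, while $X_{e'}$ is determined by the local structure around $v'$, so that when the two local structures do not overlap, the two random variables are conditionally independent given $S_e, S_{e'}$. Let $\geen^{\max}$ be the graph on $[n]$ whose edge $a$ is present iff $b_a = 1$ or $b_a' = 1$, so each of $\geen, \geen^e, \geen^F, \geen^{F \cup e}$ is a subgraph of $\geen^{\max}$. In particular, $B_k(v, \geen), B_k(v, \geen^e), B_k(v, \geen^F), B_k(v, \geen^{F \cup e})$ all have vertex sets contained in $V_k(v, \geen^{\max})$, so $X_e$ is measurable with respect to $S_e$, $S_{e'}$, and the variables revealed by a BFS from $v$ in $\geen^{\max}$ of depth $k$. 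Let $\mathcal{I}$ denote the event $\{V_k(v, \geen^{\max}) \cap V_k(v', \geen^{\max}) = \varnothing\}$.

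On $\mathcal{I}$, the claim is that $X_e$ and $X_{e'}$ are conditionally independent given $S_e, S_{e'}$. The cleanest way to formalize this is through a joint coupling in the spirit of Lemma \ref{nbd-joint-coupling}: simultaneously couple the local structure at $v$ (the quadruple of neighborhoods in $\geen, \geen^e, \geen^F, \geen^{F \cup e}$) to an independent Galton--Watson-type tuple, and the local structure at $v'$ to an analogous but independent tuple, with the two tuples independent of one another. A union bound shows the joint coupling succeeds with probability at least $1 - C \rho_k(n)$, and on this event, property (A2) of {\localprop} forces $X_e$ and $X_{e'}$ to be deterministic functions of the two independent tuples (with $S_e, S_{e'}$ as additional inputs), yielding the desired conditional independence.

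Applying the standard law-of-total-covariance decomposition with respect to $1_{\mathcal{I}}$,
\[ \mathrm{Cov}(X_e, X_{e'} \mid S_e, S_{e'}) = \E\bigl[\mathrm{Cov}(X_e, X_{e'} \mid S_e, S_{e'}, 1_{\mathcal{I}}) \bigm| S_e, S_{e'}\bigr] + \mathrm{Cov}\bigl(\E[X_e \mid S_e, S_{e'}, 1_{\mathcal{I}}], \E[X_{e'} \mid S_e, S_{e'}, 1_{\mathcal{I}}] \bigm| S_e, S_{e'}\bigr), \]
the inner conditional covariance vanishes on $\{1_{\mathcal{I}} = 1\}$ by the previous paragraph, and the remaining contribution is controlled by $\p(\mathcal{I}^c \mid S_e, S_{e'})$ times the uniform $X$-size bound $|X_e X_{e'}| \leq 1_{A_e} 1_{A_{e'}} H(w_e, w_e')^2 H(w_{e'}, w_{e'}')^2$ coming from \eqref{l-k-regularity}. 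Taking expectations and applying independence of $W$ and $B$, H\"older's inequality, and \eqref{H-moment} then reduces the whole estimate to $C J^{2/3} p_n^2 \cdot \sup_{S_e, S_{e'}} \p(\mathcal{I}^c \mid S_e, S_{e'})$. A first-moment bound on the number of paths of length at most $2k$ from $v$ to $v'$ in $\geen^{\max}$ (each edge of $\geen^{\max}$ being present with probability at most $2 p_n$) gives $\p(\mathcal{I}^c \mid S_e, S_{e'}) \leq C(\gendeg_n + C)^{2k+C}/n \leq C \rho_k(n)$ uniformly, since fixing the two edges $e, e'$ only perturbs the count by a constant factor.

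The main difficulty is making the conditional independence on $\mathcal{I}$ precise. A direct BFS argument would require careful bookkeeping of the absent-edge queries from $V_{k-1}(v, \geen^{\max})$ to its complement, and verifying these queries do not overlap with the BFS from $v'$ on $\mathcal{I}$; the joint coupling to independent Galton--Watson trees sidesteps this entirely by building the independence directly into the limit objects, with the coupling failure probability absorbed into the final $\rho_k(n)$ term.
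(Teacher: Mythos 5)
The high-level intuition here is right, but there is a real gap in the central step, and the proposed fix would not deliver the claimed rate.

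The pivotal claim is that on $\mathcal{I} = \{V_k(v, \geen^{\max}) \cap V_k(v', \geen^{\max}) = \varnothing\}$, the variables $X_e$ and $X_{e'}$ are conditionally independent given $(S_e, S_{e'}, 1_{\mathcal{I}})$. This is false. Conditioning once, globally, on disjointness introduces a residual dependence between the two local explorations: for instance, if $B_k(v, \geen^{\max})$ happens to be large, then disjointness forces $B_k(v', \geen^{\max})$ to avoid a larger region, so the two neighborhood sizes are negatively correlated under $\p(\,\cdot \mid \mathcal{I})$. Consequently the inner term in your law-of-total-covariance decomposition does not vanish on $\{1_{\mathcal{I}} = 1\}$, and you would still need to control some covariance there. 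The paper handles exactly this issue, but not by conditioning on a single disjointness event: Lemma \ref{nbd-joint-ind-complicated} performs an \emph{iterative} re-randomization depth by depth, replacing the boundary edges between the two growing clusters by fresh Bernoulli/weight variables so that the two clusters become \emph{exactly} conditionally independent after each step, and then accumulates the (small) re-randomization failure probabilities across $k$ levels. That is what produces the $\rho_k(n) = O\bigl((\gendeg_n + C)^{2k+C}/n\bigr)$ error through Lemma \ref{nbd-ind-complicated}.

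Second, the repair you propose---jointly coupling the two local quadruples to independent Galton--Watson-type tuples ``in the spirit of Lemma \ref{nbd-joint-coupling}''---is a genuinely different (and weaker) mechanism and cannot give $\rho_k(n)$. The coupling to Galton--Watson trees incurs the cost of approximating the $\mathrm{Binomial}(n-1,p_n)$ degree distribution by $\mathrm{Poisson}(\gendeg)$ and $F_w^{(n)}$ by $F_w$; that is exactly the $\varep_k(n)$ term in the theorem, of order $n^{-1/3}$ plus distributional mismatch, not $O(n^{-1})$. Using that coupling here would give $C J^{2/3} p_n^2\, \varep_k(n)$, not $C J^{2/3} p_n^2\, \rho_k(n)$, which is strictly weaker (note $\rho_k(n) \ll \varep_k(n)$ for large $n$). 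The paper's decorrelation argument never passes to a Galton--Watson limit at all; the only randomness it introduces is a re-randomization of $O(N_k N_k' p_n)$-many edges per level.

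Finally, a smaller omission: the paper first replaces $X_e, X_{e'}$ by approximations $\tilde X_e, \tilde X_{e'}$ built from the neighborhoods in $\geen - \{e,e'\}$, $\geen^F - \{e,e'\}$, etc., which are \emph{independent} of $(S_e, S_{e'})$; this converts the conditional covariance given $(S_e, S_{e'})$ into an honest unconditional covariance $\mathrm{Cov}(\Psi(\mbf{B}_k(-)), \Psi'(\mbf{B}_k'(-)))$ to which Lemma \ref{nbd-ind-complicated} can be applied, with the replacement error controlled by $O((\gendeg_n + 1)^k / n)$. Your sketch does not make this reduction, and without it the phrase ``the variables revealed by a BFS from $v$ in $\geen^{\max}$ of depth $k$'' conflates information that is and is not independent of $S_e, S_{e'}$.
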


\begin{proof}[Proof of Lemma \ref{l-k-cov-bound-first-term}]
The starting point is that $X_e$ is essentially independent of $S_{e'}$, so we should be able to write $\E [X_e ~|~ S_e, S_{e'}] \approx \E [X_e ~|~ S_e]$, and analogously for $X_{e'}$. Towards this end, recall that $X_e$ is a function of 
\[ \bigg(B_k(v, \geen), B_k(v, \geen^e), B_k(v, \geen^F), B_k(v, \geen^{F\cup e})\bigg). \] 
Let us define the approximation $\tilde{X}_e$ to be the same function, applied to 
\[ \bigg(B_k(v, \geen - e'), B_k(v, \geen^e - e'), B_k(v, \geen^F - e'), B_k(v, \geen^{F \cup e} - e')\bigg). \]
Observe that $\tilde{X}_e$ is independent of $S_{e'}$. In the same manner, we way define the approximation $\tilde{X}_{e'}$ which is independent of $S_e$. Now let $Z_e := \E [X_e ~|~ S_e, S_{e'}]$, $\tilde{Z}_e := \E [\tilde{X}_e ~|~ S_e, S_{e'}] = \E [\tilde{X}_e ~|~ S_e]$, and similarly define $Z_{e'}, \tilde{Z}_{e'}$. Let $H_e := H(w_e, w_e')$. Observe that by \ref{l-k-regularity}, we have $|X_e|, |\tilde{X}_e| \leq 1_{A_e} H_e^2$, which implies
\[ Z_e, \tilde{Z}_e \leq 1_{A_e} H_e^2, \]
and similarly for $Z_{e'}, \tilde{Z}_{e'}$. We may write
\[ \begin{split}
\mathrm{Cov}&(1_{A_e} Z_e, 1_{A_{e'}} Z_{e'}) = \mathrm{Cov}(1_{A_e} (Z_e - \tilde{Z}_e), 1_{A_{e'}} \tilde{Z}_{e'}) ~+ \\
&\mathrm{Cov}(1_{A_e} \tilde{Z}_e, 1_{A_{e'}} (Z_{e'} - \tilde{Z}_{e'})) + \mathrm{Cov}(1_{A_e} (Z_e - \tilde{Z}_e), 1_{A_{e'}} (Z_{e'} - \tilde{Z}_{e'})). 
\end{split}\]
To finish, we will bound the three terms on the right hand side. We will only write out how to bound the first term, as the other two terms are bounded similarly. First, observe
\[ |1_{A_e} 1_{A_{e'}} (Z_e - \tilde{Z}_e) \tilde{Z}_{e'}| \leq H_{e'}^2 1_{A_e} 1_{A_{e'}} |Z_e - \tilde{Z}_e|, \]
and
\[ |Z_e - \tilde{Z}_e| = |\E (X_e - \tilde{X}_e ~|~ S_e, S_{e'})| \leq 2H_e^2 \p (X_e \neq \tilde{X}_e ~|~ S_e, S_{e'}). \]
We claim that
\[ \p(X_e \neq \tilde{X}_e ~|~ S_e, S_{e'}) \leq \frac{C}{n} \sum_{j=1}^k \gendeg_n^j \leq \frac{C (\gendeg_n + 1)^k}{n}.\]
Given this claim, putting everything together, we have
\[ | \E1_{A_e} 1_{A_{e'}} (Z_e - \tilde{Z}_e) \tilde{Z}_{e'}| \leq CJ^{2/3} p_n^2 \frac{(\gendeg_n + 1)^{k}}{n}. \]
The term $|\E 1_{A_e} (Z_e - \tilde{Z}_e) \E 1_{A_{e'}} \tilde{Z}_{e'}|$ may be bounded in a similar manner.

To show the claim, observe that the event $\{X_e \neq \tilde{X}_e\}$ implies that $B_k(v, \mbf{G} - e') \neq B_k(v, \mbf{G})$ for at least one of $\mbf{G} = \geen, \geen^e, \geen^F, \geen^{F \cup e}$. Taking say $\mbf{G} = \geen$, we have
\[\p(B_k(v, \geen - e') \neq B_k(v, \geen) ~|~ S_e, S_{e'}) = \p(e' \in B_k(v, \geen) ~|~ b_e, b_{e'}). \]
This may be bounded using arguments similar to those appearing in the proof of Lemma \ref{nbd-perturb-joint-coupling}.
\end{proof}

\begin{proof}[Proof of Lemma \ref{l-k-cov-bound-second-term}]
Define
\[ \mbf{B}_k(-e) := (B_k(v, \geen - e), B_k(v, \geen^F - e), B_k(u, \geen - e), B_k(u, \geen^F - e)), \]
\[ \mbf{B}_k'(-e') := (B_k(v', \geen - e'), B_k(v', \geen^{F'} - e'), B_k(u', \geen - e'), B_k(u', \geen^{F'} - e')). \]
Observe that $X_e$ is a function of $(\mbf{B}_k(-e), S_e)$, and $X_{e'}$ is a function of $(\mbf{B}_k(-e')$, $S_{e'})$. Now define the approximation $\tilde{X}_e$ as the same function applied to $(\mbf{B}_k(-), S_e)$, where
\[ \mbf{B}_k(-) := (B_k(v, \geen - \Delta), B_k(v, \geen^F - \Delta), B_k(u, \geen - \Delta), B_k(u, \geen^F - \Delta)), \]
and $\Delta := \{e, e'\}$. Similarly define the approximation $\tilde{X}_{e'}$ of $X_{e'}$ as a function of $(\mbf{B}'_k(-), S_{e'})$, where
\[ \mbf{B}_k'(-) := (B_k(v', \geen - \Delta), B_k(v', \geen^{F'} - \Delta), B_k(u', \geen - \Delta), B_k(u', \geen^{F'} - \Delta)). \]
As before, we may bound
\[ \p(\mbf{B}_k(-) \neq \mbf{B}_k(-e) ~|~ S_e, S_{e'}) \leq \frac{C(\gendeg_n + 1)^k}{n}, \]
and similarly for $\mbf{B}_k'(-)$. Letting $\tilde{H} := \max(H_e, H_{e'})$, we obtain
\[ \abs{\mathrm{Cov}(X_e, X_{e'} ~|~ S_e, S_{e'}) - \mathrm{Cov}(\tilde{X}_e, \tilde{X}_{e'} ~|~ S_e, S_{e'})} \leq C\tilde{H}^4 \frac{(\gendeg_n + 1)^k}{n}. \]
Thus it suffices to focus on $\tilde{X}_e, \tilde{X}_{e'}$. Observe that by construction, we have that $\mbf{B}_k(-), \mbf{B}_k'(-)$ are independent of $S_e, S_{e'}$. We may thus express
\[ \mathrm{Cov}(\tilde{X}_e, \tilde{X}_{e'} ~|~ S_e, S_{e'}) = \mathrm{Cov}(\Psi(\mbf{B}_k(-)), \Psi'(\mbf{B}_k'(-))), \]
where the functions $\Psi, \Psi'$ depend on $S_e, S_{e'}$, but we hide this dependence. Moreover, we have that $\abs{\Psi}$, $\abs{\Psi'} \leq \tilde{H}^2$, which we now think of as constant when we take the covariance between $\Psi(\mbf{B}_k(-))$ and $\Psi'(\mbf{B}_k'(-))$. Define
\[\mbf{B}_k := (B_k(v, \geen), B_k(v, \geen^F), B_k(u, \geen), B_k(u, \geen^F)), \]
\[ \mbf{B}_k' := (B_k(v', \geen), B_k(v', \geen^{F'}), B_k(u', \geen), B_k(u', \geen^{F'})). \]
We may show
\[ \p(\mbf{B}_k \neq \mbf{B}_k(-)) \leq C \frac{(\gendeg_n + 1)^k}{n}, \]
and similarly for $\mbf{B}_k'$. This allows us to obtain
\[ \abs{\mathrm{Cov}(\Psi(\mbf{B}_k(-)), \Psi'(\mbf{B}_k'(-))) - \mathrm{Cov}(\Psi(\mbf{B}_k), \Psi'(\mbf{B}_k'))} \leq C\tilde{H}^4 \frac{(\gendeg_n + 1)^k}{n}. \]
Now by Lemma \ref{nbd-ind-complicated}, we have
\[ \mathrm{Cov}(\Psi(\mbf{B}_k), \Psi'(\mbf{B}_k')) \leq \tilde{H}^4 \min\bigg(\frac{(\gendeg_n + C)^{2k+C}}{n}, 1\bigg). \] 
The desired result now follows by putting everything together.
\end{proof}

\section{Technical facts about neighborhoods of sparse Erd\H{o}s-R\'{e}nyi graphs}\label{er-graph-facts}

This section collects the key technical facts about neighborhoods of sparse Erd\H{o}s-R\'{e}nyi graphs which are needed. Throughout this section, write $\gendeg_n := np_n$.



The following lemma shows that not only are neighborhoods essentially Galton-Watson trees, but pairs of neighborhoods are essentially independent Galton-Watson trees, in a very quantitative manner. This result seems to be well known and has been proven in \cite{Wast2009} for the case $p_n = 1 - e^{-\gendeg / n}$, but we haven't found a reference which provides a proof for general $np_n \ra \gendeg$. Thus for completeness, we prove it.

\begin{lemma}\label{nbd-joint-coupling}
Suppose $np_n \ra \gendeg$ and $d_{TV}(F_w^{(n)}, F_w) \ra 0$. Fix $k > 0$. For distinct vertices $v, u$, we have a coupling $(B_k(v, \geen), B_k(u, \geen), \btree_v, \btree_u)$ such that $\btree_v, \btree_u \stackrel{i.i.d.}{\sim} \btree(k, \gendeg, F_w)$, and
\[\begin{split} 
\p(B_k(v, \geen) \cong \btree_v, B_k(&u, \geen) \cong \btree_u) \geq 1 - \frac{(2\gendeg + 3)^{k}}{n^{1/3}} ~- \\
& C\frac{(\gendeg_n + 1)^k}{\min(\gendeg, 1)} \bigg(\abs{\gendeg_n - \gendeg} + d_{TV}(F_w^{(n)}, F_w) + \frac{\gendeg^2}{2n}\bigg).
\end{split}\]
\end{lemma}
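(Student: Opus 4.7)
The plan is to carry out a joint breadth-first exploration (BFS) of the two neighborhoods $B_k(v, \geen)$ and $B_k(u, \geen)$ in parallel with a step-by-step construction of two independent Galton-Watson trees $\btree_v, \btree_u \stackrel{d}{=} \btree(k, \gendeg, F_w)$, and to bound the probability that at any step these four constructions diverge in the sense of the isomorphism $\cong$ from Definition \ref{nbd-iso}.

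First I would reduce to a good event on which all four objects remain small. For a threshold $M$ chosen of order $n^{1/3}$, let $E_{\text{small}}$ be the event that each of $B_k(v, \geen), B_k(u, \geen), \btree_v, \btree_u$ has at most $M$ vertices. Applying Markov's inequality to the expected size (at most $C(\gendeg_n+1)^k$ for either neighborhood after a stochastic domination by a branching process with offspring $\mathrm{Binomial}(n, p_n)$, and similarly $(\gendeg+1)^k$ for each Galton-Watson tree) yields $\p(E_{\text{small}}^c) \leq (2\gendeg+3)^k / n^{1/3}$ after absorbing constants; this accounts for the first term in the bound.

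On $E_{\text{small}}$, the coupling is performed step by step. At each step one picks an unexplored graph-vertex $w$ and reveals its unvisited neighbors: this count is $\mathrm{Binomial}(n - s, p_n)$, with $s \leq 2M$ the number of vertices already visited across both explorations. Simultaneously reveal $\mathrm{Poisson}(\gendeg)$ children at the matching vertex in the appropriate Galton-Watson tree. Couple these two offspring counts via the triangle inequality
\[ d_{TV}(\mathrm{Binomial}(n-s, p_n), \mathrm{Poisson}(\gendeg)) \leq d_{TV}(\mathrm{Binomial}(n-s, p_n), \mathrm{Poisson}((n-s)p_n)) + \abs{(n-s)p_n - \gendeg}, \]
bounding the first term by $(n-s)p_n^2 \leq \gendeg_n p_n$ (the standard Le Cam/Stein bound) and the second by $\abs{\gendeg_n - \gendeg} + s p_n$. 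Summing over the at most $C(\gendeg_n+1)^k / \min(\gendeg, 1)$ vertices at which this coupling is applied produces the factors in front of $\abs{\gendeg_n - \gendeg}$ and $\gendeg^2/(2n)$. For every revealed edge, additionally couple an $F_w^{(n)}$-distributed weight to an $F_w$-distributed weight at cost $d_{TV}(F_w^{(n)}, F_w)$, and sum over all revealed edges to obtain the $d_{TV}(F_w^{(n)}, F_w)$ contribution.

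Finally, two remaining sources of discrepancy must be handled: (i) a collision between the BFS originating at $v$ and the BFS originating at $u$, since the trees $\btree_v, \btree_u$ are independent while the two neighborhoods are not; and (ii) the appearance of a short cycle inside a single graph neighborhood, since trees have no cycles. Both amount to an edge being revealed into a previously visited vertex, an event of probability at most $p_n \leq \gendeg_n/n$ per opportunity; on $E_{\text{small}}$ there are at most $O(M^2)$ such opportunities, and careful accounting produces the $\gendeg^2/(2n)$ term weighted by the same $(\gendeg_n+1)^k/\min(\gendeg,1)$ prefactor. The main obstacle is the bookkeeping: one must verify that the geometric sums $\sum_{j=0}^{k-1} \gendeg_n^j$ arising from layer-by-layer accounting collapse to the stated form $(\gendeg_n+1)^k/\min(\gendeg,1)$ in both regimes $\gendeg < 1$ and $\gendeg \geq 1$, and that the crude threshold choice $M \asymp n^{1/3}$ is compatible with the neighborhood tail bound and with the $O(M^2 p_n)$ collision estimate simultaneously --- any looser tail bound than $(2\gendeg+3)^k$ would have to be matched in the coupling estimates as well.
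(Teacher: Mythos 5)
Your proposal takes a genuinely different route from the paper's proof. The paper does not re-derive the tree coupling from scratch: it sets $\tilde p_n = 1 - e^{-\gendeg/n}$, cites Lemma~2.4 of W\"astlund (2009) as a black box to get the coupling of $(B_k(v,\tilde G_n), B_k(u,\tilde G_n))$ with $(T_v, T_u)$ at error $(2\gendeg+3)^k/n^{1/3}$ --- that single imported bound already bundles the offspring coupling, the collision between the two explorations, and the cycle probability --- and then only has to (a) transfer from $\tilde p_n$ to $p_n$ via a monotone coupling of Bernoullis (which is where the $\abs{\gendeg_n - \gendeg} + \gendeg^2/(2n)$ term comes from) and (b) attach edge weights via a per-edge total variation coupling (which is where the $d_{TV}$ term comes from). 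You instead propose a ground-up joint BFS/branching-process exploration. That is a legitimate and more self-contained strategy, and would make the lemma independent of the external reference.

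However, your bookkeeping for the collision and cycle events is not correct. You set the size threshold $M \asymp n^{1/3}$, so the number of opportunities for an edge to fall back into a previously visited vertex is $O(M^2) = O(n^{2/3})$, and the total probability of such an event is $O(M^2 p_n) = O(\gamma_n n^{-1/3})$. That is order $n^{-1/3}$, but you assert it gets absorbed into $\frac{(\gendeg_n+1)^k}{\min(\gendeg,1)}\cdot\frac{\gendeg^2}{2n}$, which is order $n^{-1}$ --- off by a factor of $n^{2/3}$. (In the paper, the $\gendeg^2/(2n)$ term is purely a $\tilde p_n$-versus-$\gendeg/n$ discretization artifact, $\tilde p_n = 1 - e^{-\gendeg/n} \geq \gendeg/n - \gendeg^2/(2n^2)$, and has nothing to do with collisions.) The collision and cycle contributions, as well as the depletion contribution $s\,p_n$ inside your binomial-to-Poisson TV estimate, are all of order $M^2 p_n$ and must be charged to the $(2\gendeg+3)^k/n^{1/3}$ term instead. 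For that, the threshold must be taken with a $(2\gendeg+3)^{ck}$ scaling --- something like $M \asymp (2\gendeg+3)^{k/2}\,n^{1/3}/\gendeg_n^{1/2}$ --- so that both $\mathbb{E}|B_k^v|/M$ and $M^2 p_n$ are simultaneously below $(2\gendeg+3)^k/n^{1/3}$. You flag exactly this balancing as ``the main obstacle'' at the end, but then hand the collision term to the wrong bucket without carrying the balance out; as written the argument does not close.
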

\begin{proof}
We may assume $\gendeg \leq n$, otherwise the bound is trivial. We first work without edge weights. Let $\tilde{p}_n := 1 - e^{-\gendeg / n}$, and let $\tilde{G}_n$ be the Erd\H{o}s-R\'{e}nyi graph with edge probability $\tilde{p}_n$. For brevity, let $\tilde{B}_k^v := B_k(v, \tilde{G}_n)$, $\tilde{B}_k^u := B_k(u, \tilde{G}_n)$. It follows by Lemma 2.4 of \cite{Wast2009} that we may couple $(\tilde{B}_k^v, \tilde{B}_k^u, T_v, T_u)$ such that $T_v, T_u \stackrel{i.i.d.}{\sim} T(k, \gendeg)$, and
\[ \p(\tilde{B}_k^v \cong T_v, \tilde{B}_k^u \cong T_u) \geq 1 - \frac{(2\gendeg + 3)^k}{n^{1/3}}.\]
Now suppose $p_n \geq \tilde{p}_n$. We may couple $G_n$, $\tilde{G}_n$ in the following manner. If $\tilde{G}_n$ is defined by the edges $\tilde{B} = (\tilde{b}_e, e = (i, j))$, with $\tilde{b}_e \sim \mathrm{Bernoulli}(\tilde{p}_n)$, then define $b_e = \max(\tilde{b}_e, \varep_e)$, with $\varep_e \sim \mathrm{Bernoulli}((p_n - \tilde{p}_n) / (1 - \tilde{p}_n))$. Then $b_e \sim \mathrm{Bernoulli}(p_n)$ as required. Let $B_k^v := B_k(v, G_n)$, $B_k^u := B_k(u, G_n)$. Observe that the event $\{B_k^v \neq \tilde{B}_k^v\}$ is the event that there exists vertices $u_1 \in \tilde{B}_{k-1}^v$, $u_2 \notin \tilde{B}_k^v$, such that $\tilde{b}_{(u_1, u_2)} = 0$, $\varep_{(u_1, u_2)} = 1$. We thus have
\[ \p(B_k^v \neq \tilde{B}_k^v ~|~ \tilde{B}_k^v) \leq n |\tilde{B}_{k-1}^v| \frac{p_n - \tilde{p}_n}{1 - \tilde{p_n}},\]
where $|\tilde{B}_{k-1}^v|$ is the number of vertices in $\tilde{B}_{k-1}^v$. Thus
\[ \p(B_k^v \neq \tilde{B}_k^v) \leq n \frac{p_n - \tilde{p}_n}{1 - \tilde{p_n}} \E |\tilde{B}_{k-1}^v|.\]
By comparison with a branching process, we may bound
\[ \E |\tilde{B}_{k-1}^v| \leq \sum_{j=0}^{k-1} \gendeg_n^j \leq (\gendeg_n + 1)^{k-1}.\]
We have $\tilde{p}_n \geq \frac{\gendeg}{n} - \frac{\gendeg^2}{2n^2}$, and thus (recalling $\lambda \leq n$)
\[n \frac{p_n - \tilde{p}_n}{1 - \tilde{p_n}}  \leq e^{\gendeg / n} \bigg(\gendeg_n - \gendeg + \frac{\gendeg^2}{2n}\bigg) \leq  C \bigg(\abs{\gendeg_n - \gendeg} + \frac{\gendeg^2}{2n}\bigg). \]
We may thus couple $(B_k^v, B_k^u, T_v, T_u)$ such that
\[\p(B_k^v \cong T_v, B_k^u \cong T_u) \geq 1 - \frac{(2\gendeg + 3)^k}{n^{1/3}} - C (\gendeg_n + 1)^{k-1}\bigg(\abs{\gendeg_n - \gendeg} + \frac{\gendeg^2}{2n}\bigg).  \]
Now for each $e$ introduce a coupling $(w_e, \ell_e)$, such that $\p(w_e \neq \ell_e) = d_{TV}(F_w^{(n)}, F_w)$. Let $E_0$ be the event that there is an $e$ in $B_k^v$ or $B_k^u$ such that $w_e \neq \ell_e$. We may naturally couple $(B_k(v, \geen)$, $B_k(u, \geen), \btree_v, \btree_u)$, such that $\btree_v, \btree_u \stackrel{i.i.d.}{\sim} \btree(k, \gendeg, F_w)$, and
\[ \p(B_k(v, \geen) \cong \btree_v, B_k(u, \geen) \cong \btree_u) \geq \p(B_k^v \cong T_v, B_k^u \cong T_u) - \p(E_0).\]
By comparison with a branching process, the expected number of edges in $B_k^v$ is at most $\sum_{j=1}^k \gendeg_n^j \leq (\gendeg_n + 1)^k$. Thus we have
\[ \p(E_0) \leq  2(\gendeg_n + 1)^k d_{TV}(F_w^{(n)}, F_w).\]
To finish, combine the previous results. The case $p_n < \tilde{p}_n$ is handled similarly.
\end{proof}

The following lemma says that if we can couple unconditionally with high probability, then we can couple conditionally with high probability. It was needed in Lemma \ref{remainder-lemma}, in combination with {\localprop}, to show that the remainder terms were small. Recall in (A3) of {\localprop} the definitions of $\btree_k, \tilde{\btree}_k$.

\begin{lemma}\label{nbd-perturb-joint-coupling}
Suppose we have a coupling 
\[ (B_k(v, \geen), B_k(u, \geen), \btree_v, \btree_u), \] 
with $\btree_v, \btree_u \stackrel{i.i.d.}{\sim} \btree(k, \gendeg, F_w)$. Suppose $\varep_k(n)$ is such that
\[ \varep_k(n) \geq 1 - \p(B_k(v, \geen) \cong \btree_v, B_k(u, \geen) \cong \btree_u). \]
Let $e = (v, u)$, and let $e' = (v', u')$ be another edge with vertices distinct from $v, u$. Let $Y_e := (b_e, b_e')$. It is possible to couple $(B_k(v, \geen), B_k(v, \geen^e), \btree, \btree')$ such that 
\[ (\btree, \btree') ~|~ (Y_e = (1, 0), Y_{e'}) \stackrel{d}{=} (\tilde{\btree}_k, \btree_k), \] 
\[ (\btree, \btree') ~|~ (Y_e = (0, 1), Y_{e'}) \stackrel{d}{=} (\btree_k, \tilde{\btree}_k), \]
and furthermore, 
\[ \begin{split}
\p((B_k(v, \geen), B_k(v, \geen^e)) &\cong (\btree, \btree') ~|~ Y_e, Y_{e'}) \geq 1 ~-\\
&\bigg( \jointcouplingprob\bigg). 
\end{split}\]
\end{lemma}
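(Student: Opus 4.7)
The plan is to reduce the problem to a coupling on the auxiliary graph $\geen^- := \geen - e$, which is independent of the edge parameters $(w_e, b_e, w_e', b_e')$. First I would derive from the given coupling $(B_k(v, \geen), B_k(u, \geen), \btree_v, \btree_u)$ an auxiliary coupling $(B_k(v, \geen^-), B_{k-1}(u, \geen^-), \mbf{S}_v, \mbf{S}_u')$, with $\mbf{S}_v, \mbf{S}_u'$ independent and $\mbf{S}_v \sim \btree(k, \gendeg, F_w)$, $\mbf{S}_u' \sim \btree(k-1, \gendeg, F_w)$. A natural construction sets $\mbf{S}_v := \btree_v$ and $\mbf{S}_u'$ to be the depth-$(k-1)$ subtree of $\btree_u$. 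The isomorphisms $B_k(v, \geen^-) \cong \mbf{S}_v$ and $B_{k-1}(u, \geen^-) \cong \mbf{S}_u'$ then hold whenever the given coupling succeeds and neither $e$ nor $e'$ contributes to $B_k(v, \geen) \cup B_k(u, \geen)$, the latter event having probability at most $C(\gendeg_n + 1)^k/n$ by a first-moment estimate on the neighborhood sizes (reminiscent of arguments used in Lemma \ref{nbd-joint-coupling}).

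Next, I would use the fact that $\geen^-$ is independent of $(w_e, b_e, w_e', b_e')$ to construct the target coupling case-by-case on $Y_e$. For $Y_e = (1, 0)$, take $\btree' := \mbf{S}_v$, and let $\btree$ be obtained by joining the roots of $\mbf{S}_v$ and $\mbf{S}_u'$ with a new edge of weight $\ell$, where $\ell$ is maximally coupled to $w_e$ so that $\p(\ell \neq w_e) \leq d_{TV}(F_w^{(n)}, F_w)$. By the construction recipe for $\tilde{\btree}_k$, this gives $(\btree, \btree') \stackrel{d}{=} (\tilde{\btree}_k, \btree_k)$. The case $Y_e = (0, 1)$ is handled symmetrically, using a coupling of $w_e'$ with a fresh $\ell' \sim F_w$. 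For the required isomorphism $(B_k(v, \geen), B_k(v, \geen^e)) \cong (\btree, \btree')$ to hold on $Y_e = (1, 0)$, I need (i) the auxiliary coupling to succeed, (ii) the weight coupling to succeed, and (iii) the additional condition that $V(B_k(v, \geen^-))$ and $V(B_{k-1}(u, \geen^-))$ be disjoint, so that adding the edge $e$ back to $\geen^-$ extends the tree without creating a cycle.

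Finally, I would bound the conditional failure probability by summing four error sources: $\varep_k(n)$ from the given coupling, $C(\gendeg_n+1)^k/n$ from the reduction $\geen \mapsto \geen^-$, $C(\gendeg_n+1)^{2k}/n$ from the vertex-disjointness condition (via a first-moment bound on the expected intersection of the two neighborhoods), and $2 d_{TV}(F_w^{(n)}, F_w)$ from the two weight couplings. Absorbing the lower-order $(\gendeg_n+1)^k/n$ term into the $(\gendeg_n+1)^{2k}/n$ contribution recovers exactly the bound claimed by the lemma. The main obstacle will be to handle the conditioning on $(Y_e, Y_{e'})$ cleanly: naively, conditioning on the low-probability event $Y_e = (1, 0)$ risks inflating error probabilities by a factor of $1/\p(Y_e = (1, 0))$. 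This is avoided by observing that every failure event is measurable with respect to $\geen^-$ together with independent auxiliary randomness (the trees $\btree_v, \btree_u$ and the weight $\ell$), and hence is independent of $(Y_e, Y_{e'})$, so that the conditional bounds coincide with the unconditional ones.
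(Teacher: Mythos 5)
Your construction has the right shape, but the key step that handles the conditioning on $(Y_e, Y_{e'})$ has a genuine gap. You assert that every failure event is measurable with respect to $\geen^- := \geen - e$ together with ``independent auxiliary randomness (the trees $\btree_v, \btree_u$ and the weight $\ell$),'' and hence independent of $(Y_e, Y_{e'})$. Neither half of this holds as written. First, $\geen^-$ is obtained by deleting only $e$, so it still contains $b_{e'}, w_{e'}$, and $b_{e'}$ is a coordinate of $Y_{e'}$; thus $B_k(v, \geen^-)$ is not independent of $Y_{e'}$. Second, and more fundamentally, the trees $\btree_v, \btree_u$ from the hypothesized coupling are coupled to $(B_k(v,\geen), B_k(u,\geen))$ --- which depend on $b_e$ and $b_{e'}$ --- and there is nothing in the statement of the hypothesis forcing $(\btree_v, \btree_u)$ to be independent of $(Y_e, Y_{e'}, w_e, w_e')$. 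For instance, the given coupling could have been built so that the realization of $\btree_v$ is chosen differently depending on whether $b_e = 1$ or $b_e = 0$; your derived $(\mbf{S}_v, \mbf{S}_u')$ would then inherit this dependence, and conditioning on the rare event $\{Y_e = (1,0)\}$ could inflate the failure probability by a factor on the order of $1/\p(Y_e = (1,0)) \asymp 1/p_n$, which is fatal.

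The paper resolves exactly this point by \emph{resampling} rather than deleting: it introduces an i.i.d. copy $(W'', B'')$ and sets $\geen''$ to be $\geen$ with the edge data at \emph{both} $e$ and $e'$ replaced by $(w_e'', b_e'')$ and $(w_{e'}'', b_{e'}'')$. Then $\geen''$ has the same distribution as $\geen$ but is manifestly independent of $(Y_e, Y_{e'}, w_e, w_e')$, so one may invoke the hypothesized coupling for $\geen''$ (using the depth-$k$ and depth-$(k-1)$ neighborhoods of $v$ and $u$) and declare the coupled trees $(\btree_k, \btree_{k-1}')$ to also be independent of $(Y_e, Y_{e'}, w_e, w_e')$. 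The conditional probability bound then genuinely reduces to an unconditional one, at the cost of extra error terms of the form $\p(B_k(v, \geen'') \neq B_k(v, \geen - e) \mid Y_e, Y_{e'})$ and $\p(e' \in B_k(v, \geen - e) \mid b_{e'})$, which are controlled by first-moment estimates and absorbed into the $C(\gendeg_n+1)^{2k}/n$ term. To repair your argument you would need to make the analogous resampling step explicit, applied to both $e$ and $e'$, before asserting any independence from $(Y_e, Y_{e'})$; the disjointness condition and weight-coupling bookkeeping in your plan then go through essentially as you describe.
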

\begin{proof}
To construct $\btree, \btree'$, first take an i.i.d. copy $(W'', B'')$ of $(W, B)$, independent of everything else. Define $\geen''$ to be the weighted graph obtained by using $w_e'', b_e''$ in place of $w_e, b_e$, and $w_{e'}'', b_{e'}''$ in place of $w_{e'}, b_{e'}$. Now obtain a coupling $(B_k(v, \geen''), B_{k-1}(u, \geen''), \btree_k, \btree_{k-1}')$, with $\btree_k, \btree_{k-1}'$ independent, and $\btree_k \stackrel{d}{=} \btree(k, \gendeg, F_w)$, and $\btree_{k-1}' \stackrel{d}{=} \btree(k-1, \gendeg, F_w)$. Observe that $(B_k(v, \geen''), B_{k-1}(u, \geen''))$ is independent of $Y_e, Y_{e'}, w_e, w_e'$, and so we may also assume that $(\btree_k, \btree_{k-1}')$ is independent of $Y_e, Y_{e'}, w_e, w_e'$. Write $B_k := B_k(v, \geen)$, $B_k' := B_k(v, \geen^e)$. There is some function $\Psi$ such that
\[ B_k = \Psi(B_k(v, \geen - e), B_{k-1}(u, \geen - e), w_e, b_e), \]
\[ B_k' = \Psi(B_k(v, \geen - e), B_{k-1}(u, \geen - e), w_e', b_e'). \]
Take a coupling $(w_e, w_e', \ell, \ell')$ independent of everything else such that we have $w_e, w_e' \stackrel{i.i.d.}{\sim} F_w^{(n)}$, $\ell, \ell' \stackrel{i.i.d.}{\sim} F_w$, and
\[ \p(w_e \neq \ell) = \p(w_e' \neq \ell') = d_{TV}(F_w^{(n)}, F_w). \]
Define 
\[ \btree := \Psi(\btree_k, \btree_{k-1}', \ell, b_e), \]
\[ \btree' := \Psi(\btree_k, \btree_{k-1}', \ell', b_e'). \]
Observe that $(\btree, \btree')$ has the desired conditional distribution. Let $E_0$ be the event that $B_k(v, \geen''), B_{k-1}(u, \geen'')$ share a vertex. We have 
\[\begin{split} 
\p((B_k, B_k') \cong (\btree, \btree') ~&|~ Y_e, Y_{e'}) \geq \\
&\p(B_k(v, \geen'') \cong \btree_k, B_{k-1}(u, \geen'') \cong \btree_{k-1}') - \p(E_0) ~- \\
& \p(B_k(v, \geen'') \neq B_k(v, \geen - e) ~|~ Y_e, Y_{e'} ) ~- \\
&\p(B_{k-1}(u, \geen'') \neq B_k(v, \geen - e) ~|~ Y_e, Y_{e'}) ~- \\
&2\p(\ell \neq w_e).  
\end{split}\]
By assumption, we have
\[ \p(B_k(v, \geen'') \cong \btree_k, B_{k-1}(u, \geen'') \cong \btree_{k-1}')  \geq 1 - \varep_k(n).\]
By the union bound, we have
\[ \p(E_0) \leq n\sum_{j_1 = 1}^k \sum_{j_2 = 1}^{k-1} n^{j_1 - 1} n^{j_2 - 1} p_n^{j_1 + j_2} \leq \frac{(\lambda_n + 1)^{2k-1}}{n}. \]
Proceeding, we may bound
\[\begin{split} 
\p(B_k(v, \geen'') &\neq B_k(v, \geen - e) ~|~ Y_e, Y_{e'}) \leq \p(e \in B_k(v, \geen'')) ~+ \\
&\p(e' \in B_k(v, \geen'')) + \p(e' \in B_k(v, \geen - e) ~|~ b_{e'}).
\end{split}\]
We have
\[ \begin{split}
\p(e' \in B_k(v, \geen - e) ~|~ b_{e'}) \leq \p(v' &\in B_k(v, \geen - \{e, e'\})) ~+ \\
&\p(u' \in B_k(v, \geen - \{e, e'\})). 
\end{split} \]
We have
\[ \p(v' \in B_k(v, \geen - \{e, e'\})) \leq \p(v' \in B_k(v, \geen)) \leq \frac{1}{n} \sum_{j=1}^k \gendeg_n^j \leq \frac{(\gendeg_n + 1)^k}{n}. \] 
All other terms may be handled similarly.
\end{proof}



In the rest of the section, we will work towards Lemma \ref{nbd-ind-complicated}, which was needed to bound the covariance between local quantities (Lemma \ref{l-k-cov-bound}, or more specifically, Lemma \ref{l-k-cov-bound-second-term}). The main work is done by Lemma \ref{nbd-joint-ind-complicated}. Instead of proving this straight away, we will first prove the simpler Lemma \ref{coupling-ind-lemma}, where the main idea becomes easier to describe.

With $n$ implicit, we write $B_k^v$ instead of $B_k(v, \geen)$ for brevity. We may explore $B_k^v$ by breadth first search. In other words, from the root $v$, find all neighbors of $v$, and call these the depth 1 vertices. Then find all neighbors of the depth 1 vertices, and call these the depth 2 vertices. Here we specify that if a neighbor of a depth 1 vertex has already been found, then we don't call it a depth 2 vertex. If we can keep exploring in this manner, we obtain an iterative description of $B_k^v$ as follows.

Let $S_k^v$ be the vertex set of $B_k^v$, and let $D_k^v := S_k^v - S_{k-1}^v$ be the set of depth $k$ vertices of $B_k^v$. Given subsets $S_1, S_2$ of the vertex set $V_n$ of $\geen$, let 
\[ X(S_1, S_2) := \{(e, w_e, b_e) : e = (v_1, v_2), v_1 \in S_1, v_2 \in S_2 \}. \]
There is some function $\Psi$ such that for each $k$, we have
\[ B_{k+1}^v = \Psi(B_k^v, X(D_k^v, V_n - S_{k-1}^v)).\]
In words, this says that in the $(k+1)$st iteration, breadth first search explores all edges incident to a vertex in $D_k^v$, i.e. a depth $k$ vertex. Moreover, we only need to look at edges which connect $D_k^v$ and $V_n - S_{k-1}^v$. This is because edges between $D_k^v$ and $S_{k-1}^v$ have already been explored by previous iterations. We may use this iterative description of $B_k^v$ to obtain the following lemma.

\begin{lemma}\label{coupling-ind-lemma}
For each $k$, there is a coupling of $(B_{k+1}^v, B_{k+1}^u, \tilde{B}_{k+1}^v, \tilde{B}_{k+1}^u)$ that satisfies the following properties. Let $I_k := \{S_k^v \cap S_k^u = \varnothing\}$. Then on $I_k$, $\tilde{B}_{k+1}^v, \tilde{B}_{k+1}^u$ are conditionally independent given $B_k^v, B_k^u$. Moreover, on the event $I_k$, the conditional law of $\tilde{B}_{k+1}^v$ given $B_k^v, B_k^u$ is the conditional law of $B_{k+1}^v$ given $B_k^v$, and the conditional law of $\tilde{B}_{k+1}^u$ given $B_k^v, B_k^u$ is the conditional law of $B_{k+1}^u$ given $B_k^u$. In other words, for bounded measurable functions $g_v, g_u$, we have
\[ 1_{I_k} \mathrm{Cov}(g_v(\tilde{B}_{k+1}^v), g_u(\tilde{B}_{k+1}^u) ~|~ B_k^v, B_k^u) = 0, \] 
\[ 1_{I_k} \E(g_v(\tilde{B}_{k+1}^v) ~|~ B_k^v, B_k^u) = 1_{I_k} \E (g_v(B_{k+1}^v) ~|~ B_k^v),  \]
\[ 1_{I_k} \E(g_u(\tilde{B}_{k+1}^u) ~|~ B_k^v, B_k^u) = 1_{I_k} \E (g_u(B_{k+1}^u) ~|~ B_k^u). \]
Finally, we have
\[ 1_{I_k} \p(\tilde{B}_{k+1}^v \neq B_{k+1}^v ~|~ B_k^v, B_k^u) \leq 1_{I_k} C|S_k^v| |S_k^u| p_n, \]
\[ 1_{I_k} \p(\tilde{B}_{k+1}^u \neq B_{k+1}^u ~|~ B_k^v, B_k^u) \leq 1_{I_k} C|S_k^v| |S_k^u| p_n. \]
\end{lemma}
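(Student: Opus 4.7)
The plan is to construct the coupling by reusing the original edge variables almost everywhere and substituting independent fresh copies only on the edges where the two BFS expansions from $v$ and $u$ would otherwise interact. Let $(W^\dagger, B^\dagger)$ and $(W^\ddagger, B^\ddagger)$ be two fresh independent copies of $(W, B)$, mutually independent and independent of everything else. Define
\[ \tilde{B}_{k+1}^v := \Psi(B_k^v, X^\dagger(D_k^v, V_n \setminus S_{k-1}^v)), \]
where $X^\dagger$ uses the true $(b_e, w_e)$ for every edge $e$ from $D_k^v$ to $V_n \setminus (S_{k-1}^v \cup S_k^u)$ (in particular for edges within $D_k^v$), and uses the fresh $(b_e^\dagger, w_e^\dagger)$ for edges $e$ from $D_k^v$ to $S_k^u$. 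Define $\tilde{B}_{k+1}^u$ symmetrically with the other fresh copy, overwriting edges from $D_k^u$ to $S_k^v$.

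Conditional on $(B_k^v, B_k^u)$ and on $I_k$, the true edge variables feeding $\tilde{B}_{k+1}^v$ are disjoint from those feeding $\tilde{B}_{k+1}^u$, since any common pair would have to lie in $D_k^v \times D_k^u \subseteq D_k^v \times S_k^u$, which is excluded from the true-variable side of both constructions. Combined with the independence of the two fresh copies, this yields $\mathrm{Cov}(g_v(\tilde{B}_{k+1}^v), g_u(\tilde{B}_{k+1}^u) \mid B_k^v, B_k^u) = 0$ on $I_k$, which is the first desired identity.

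For the marginal conditional law, note that on $I_k$ the edges from $D_k^v$ to $S_k^u$ are all revealed to be absent by conditioning on $B_k^u$ (any present edge would place a vertex of $D_k^v$ inside $S_k^u$, contradicting $I_k$), whereas given only $B_k^v$ these same edges are i.i.d. $\mathrm{Bernoulli}(p_n)$ with i.i.d. $F_w^{(n)}$ weights, independent of $B_k^v$. Overwriting them with the fresh $(b_e^\dagger, w_e^\dagger)$ precisely restores the unconditioned distribution, while every other edge variable used to build $\tilde{B}_{k+1}^v$ has the same conditional law under either conditioning. Hence the conditional law of $\tilde{B}_{k+1}^v$ given $(B_k^v, B_k^u)$ on $I_k$ equals the conditional law of $B_{k+1}^v$ given $B_k^v$, and symmetrically for $u$.

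For the error bound, $\tilde{B}_{k+1}^v \neq B_{k+1}^v$ requires $(b_e^\dagger, w_e^\dagger) \neq (b_e, w_e)$ for some $e$ from $D_k^v$ to $S_k^u$; since $b_e = 0$ on $I_k$ for such $e$, this forces $b_e^\dagger = 1$, an event of probability $p_n$. A union bound over the at most $|D_k^v||S_k^u| \leq |S_k^v||S_k^u|$ such edges gives the claimed bound, with the analogous bound for $u$. The main step deserving care is the bookkeeping of which edges are revealed or unrevealed after conditioning on both $B_k^v$ and $B_k^u$: once one isolates that the only sources of joint dependence between the two next-step explorations are the edges from $D_k^v$ to $S_k^u$ and from $D_k^u$ to $S_k^v$, and that these are exactly the edges forced absent by $I_k$, the patching strategy writes itself.
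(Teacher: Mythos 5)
Your construction is a valid variant of the paper's, and it achieves the required conditional independence by freshening the edges between $D_k^v$ and $D_k^u$ with two independent copies (one for each side). The paper instead keeps the true $X_3 = X(D_k^v, D_k^u)$ intact for $\tilde B_{k+1}^v$ and uses a single fresh copy $\tilde X_3$ only for $\tilde B_{k+1}^u$, re-randomizing only the edges from $D_k^v$ to $S_{k-1}^u$ and from $D_k^u$ to $S_{k-1}^v$; both choices give the needed decoupling, and yours is arguably more symmetric.

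However, there is a factual error in your justification. You assert that ``on $I_k$ the edges from $D_k^v$ to $S_k^u$ are all revealed to be absent by conditioning on $B_k^u$'' and later that ``$b_e = 0$ on $I_k$ for such $e$.'' This is true only for edges from $D_k^v$ to $S_{k-1}^u$: an edge from $D_k^v$ to a vertex $b\in D_j^u$ with $j < k$ is indeed exposed by the BFS from $u$, and its presence would force a vertex of $D_k^v$ into $S_{j+1}^u\subseteq S_k^u$, contradicting $I_k$. But edges from $D_k^v$ to $D_k^u$ link two depth-$k$ frontiers and are exposed by neither $B_k^v$ nor $B_k^u$; they are not forced absent by $I_k$ and remain i.i.d.\ $\mathrm{Bernoulli}(p_n)$ conditionally on $(B_k^v,B_k^u,I_k)$. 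The marginal-law conclusion survives this slip, since overwriting i.i.d.\ variables with fresh i.i.d.\ copies leaves the law unchanged, but the error-bound argument as written does not: for $e\in D_k^v\times D_k^u$ you cannot conclude the discrepancy event forces $b_e^\dagger = 1$, since $b_e = 1$ is now possible. The fix is immediate — bound the discrepancy probability for such $e$ by $\mathbb{P}(\max(b_e, b_e^\dagger) = 1) \leq 2 p_n$ and absorb the factor of $2$ into $C$ — but as stated the step asserting $b_e = 0$ for \emph{all} $e$ from $D_k^v$ to $S_k^u$ is incorrect and should be repaired.
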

\begin{remark}
The main idea of the proof is noting that as long as $B_k^v, B_k^u$ don't intersect, the objects in next iteration $B_{k+1}^v, B_{k+1}^u$ are very weakly interacting with each other. Moreover, the amount of interaction is governed by the size of $B_k^v, B_k^u$. We can remove these interactions by re-randomization, and if the sizes of $B_k^v, B_k^u$ are not too large, then this re-randomization is unlikely to cause changes.
\end{remark}
\begin{proof}
We first show how to generate the pair $(B_{k+1}^v, B_{k+1}^u)$ starting from $B_k^v, B_k^u$, on the event $I_k$. Let
\[ X_1 := X(D_k^v, V_n - S_{k-1}^v - S_k^u), \]
\[ X_2 := X(D_k^u, V_n - S_{k-1}^u - S_k^v), \]
\[ X_3 := X(D_k^v, D_k^u). \]
Then on $I_k$, we have
\[ B_{k+1}^v = \Psi(B_k^v, X_1 \cup X_3),\]
\[ B_{k+1}^u = \Psi(B_k^u, X_2 \cup X_3). \]
Note $X_1 \cup X_3 = X(D_k^v, V_n - S_{k-1}^v - S_{k-1}^u)$, and $X_2 \cup X_3 = X(D_k^u, V_n - S_{k-1}^u - S_{k-1}^v)$. The point is that on $I_k$, there are some further restrictions on which edges can be present. In other words, there can be no edges between $D_k^v$ and $S_{k-1}^u$, and there can be no edges between $D_k^u$ and $S_{k-1}^v$. Note that on $I_k$, the objects $X_1, X_2$ are conditionally independent given $B_k^v, B_k^u$.

We now construct $(\tilde{B}_{k+1}^v, \tilde{B}_{k+1}^u)$. First, let $\tilde{X}_3$ be an i.i.d. copy of $X_3$, conditional on $B_k^v, B_k^u$. Observe then that $\Psi(B_k^v, X_1 \cup X_3)$, $\Psi(B_k^u, X_2 \cup \tilde{X}_3)$ are conditionally independent (at least on $I_k$). However, the conditional law of $\Psi(B_k^v, X_1 \cup X_3)$ is not as desired. To correct this, we re-randomize the edges between $D_k^v$ and $S_{k-1}^u$. In other words, take $(W', B')$ an i.i.d. copy of $(W, B)$. Define
\[ X_1' := \{(e, w_e', b_e') : e = (v_1, v_2), v_1 \in D_k^v , v_2 \in S_{k-1}^u\}. \]
Similarly, let
\[ X_2' := \{(e, w_e', b_e') : e = (v_1, v_2), v_1 \in D_k^u, v_2 \in S_{k-1}^v \}. \]
We now set
\[ \tilde{B}_{k+1}^v := \Psi(B_k^v, X_1 \cup X_1' \cup X_3), \]
\[ \tilde{B}_{k+1}^u := \Psi(B_k^u, X_2 \cup X_2' \cup \tilde{X}_3). \]
The point is that to obtain $\tilde{B}_{k+1}^v$ from $B_k^v$, we no longer include the restrictions on the edges between $D_k^v$ and $S_{k-1}^u$ that are induced by the event $I_k$, and thus the law of $\tilde{B}_{k+1}^v$ given $B_k^v, B_k^u$ is exactly the law of $B_{k+1}^v$ given $B_k^v$. The analogous is true for $\tilde{B}_{k+1}^u$.

To finish, we need to show that $\tilde{B}_{k+1}^v \neq B_{k+1}^v$ with very low probability. This event only happens if one of the re-randomized edges between $D_k^v$ and $S_{k-1}^u$ is present, i.e. there must exist some $e = (v_1, v_2)$, with $v_1 \in D_k^v$, $v_2 \in S_{k-1}^u$, such that $b_e' = 1$. A union bound now does the trick. A similar argument works for $\tilde{B}_{k+1}^u$.
\end{proof}

By applying Lemma \ref{coupling-ind-lemma}, we can obtain the following lemma.

\begin{lemma}\label{covariance-iteration}
Let $g_v, g_u$ be measurable functions which are bounded in absolute value by 1. Then
\[ \begin{split}
\mathrm{Cov}(g_v(B_{k+1}^v), g_u(B_{k+1}^u)) \leq C &(\p(I_k^c) + p_n \E (|S_k^v| |S_k^u|)) ~+ \\
&\mathrm{Cov}(\E (g_v(B_{k+1}^v) ~|~ B_k^v), \E (g_u(B_{k+1}^u) ~|~ B_k^u)).
\end{split}\]
\end{lemma}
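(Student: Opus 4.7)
The plan is to directly apply the coupling from Lemma \ref{coupling-ind-lemma} to swap out $B_{k+1}^v, B_{k+1}^u$ for $\tilde{B}_{k+1}^v, \tilde{B}_{k+1}^u$ inside the covariance, then exploit the conditional independence of the tildes given $B_k^v, B_k^u$ on the event $I_k$.

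First I would work at the level of conditional expectations. On $I_k$, using the coupling bound from Lemma \ref{coupling-ind-lemma} together with the trivial inequality $|g_v(B_{k+1}^v) g_u(B_{k+1}^u) - g_v(\tilde B_{k+1}^v) g_u(\tilde B_{k+1}^u)| \leq 2(1(\tilde B_{k+1}^v \neq B_{k+1}^v) + 1(\tilde B_{k+1}^u \neq B_{k+1}^u))$ (since both $g_v,g_u$ are bounded by $1$ in absolute value), I get
\[ 1_{I_k}\bigl|\E[g_v(B_{k+1}^v) g_u(B_{k+1}^u) \mid B_k^v, B_k^u] - \E[g_v(\tilde B_{k+1}^v) g_u(\tilde B_{k+1}^u) \mid B_k^v, B_k^u]\bigr| \leq C 1_{I_k} |S_k^v||S_k^u| p_n. \]
Then the conditional independence and marginal law properties of $(\tilde B_{k+1}^v, \tilde B_{k+1}^u)$ supplied by Lemma \ref{coupling-ind-lemma} give
\[ 1_{I_k} \E[g_v(\tilde B_{k+1}^v) g_u(\tilde B_{k+1}^u) \mid B_k^v, B_k^u] = 1_{I_k} \E[g_v(B_{k+1}^v) \mid B_k^v]\, \E[g_u(B_{k+1}^u) \mid B_k^u]. \]

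Next I would take expectations and unwind. Writing $\phi_v := \E[g_v(B_{k+1}^v) \mid B_k^v]$ and $\phi_u := \E[g_u(B_{k+1}^u) \mid B_k^u]$, the previous two displays combined with the bound $|g_v g_u| \leq 1$ on $I_k^c$ yield
\[ \bigl|\E[g_v(B_{k+1}^v) g_u(B_{k+1}^u)] - \E[1_{I_k} \phi_v \phi_u]\bigr| \leq \p(I_k^c) + C p_n\, \E(|S_k^v||S_k^u|). \]
Since $|\phi_v \phi_u| \leq 1$, replacing $\E[1_{I_k} \phi_v \phi_u]$ with $\E[\phi_v \phi_u]$ costs another $\p(I_k^c)$. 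Subtracting $\E[g_v(B_{k+1}^v)] \E[g_u(B_{k+1}^u)] = \E \phi_v \cdot \E \phi_u$ from both sides produces
\[ \mathrm{Cov}(g_v(B_{k+1}^v), g_u(B_{k+1}^u)) \leq \mathrm{Cov}(\phi_v, \phi_u) + C\bigl(\p(I_k^c) + p_n\, \E(|S_k^v||S_k^u|)\bigr), \]
which is the desired bound.

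The argument is essentially bookkeeping once Lemma \ref{coupling-ind-lemma} is in hand; there is no real obstacle. The only point where one has to be slightly careful is the handling of $I_k^c$: the conditional expectations $\E[\cdot \mid B_k^v, B_k^u]$ and $\E[\cdot \mid B_k^v]$ need not agree off of $I_k$, which is why the $1_{I_k}$ indicator must be inserted before passing to $\phi_v \phi_u$, and why a $\p(I_k^c)$ error term appears twice. Both insertions cost at most $C \p(I_k^c)$, so they are absorbed into the first term on the right-hand side.
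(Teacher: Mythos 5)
Your argument is correct and is exactly the bookkeeping the paper has in mind: the paper simply states that Lemma \ref{covariance-iteration} follows "by applying Lemma \ref{coupling-ind-lemma}" without spelling out the details, and your proof fills them in in the intended way (swap to the tilde variables on $I_k$, use conditional independence and the matching marginals, bound the swap cost by $C p_n\, \E(|S_k^v||S_k^u|)$, and pay $\p(I_k^c)$ for the indicator insertions, noting $\E g_v = \E\phi_v$ and $\E g_u = \E\phi_u$ by the tower property).
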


The following lemma says that the event $I_k$ happens with very high probability, and that the sizes of $|S_k^v|, |S_k^u|$ are not too large.

\begin{lemma}\label{galton-watson-size-lemma}
For any $k > 0$, we have
\[ \p(I_k^c) \leq \frac{1}{n} \sum_{l=1}^{2k} \lambda_n^l \leq \frac{(\gendeg_n + 1)^{2k}}{n}, \]
\[ \E (|S_k^v| |S_k^u|) \leq Ck (\gendeg_n + C)^{2k+C} \leq  (\gendeg_n + C)^{2k+C}. \]
\end{lemma}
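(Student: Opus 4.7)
The plan is to handle the two bounds separately by direct arguments, since neither part is conceptually subtle.

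For the first bound, I observe that $I_k^c$ is exactly the event that there is a path in $\geen$ from $v$ to $u$ of length at most $2k$. A path of length $l$ from $v$ to $u$ is specified by an ordered sequence $(v_1, \dots, v_{l-1})$ of distinct intermediate vertices drawn from $V_n \setminus \{v, u\}$, and the number of such sequences is at most $n^{l-1}$. Each such path is present in $\geen$ with probability $p_n^l = (\gendeg_n/n)^l$. Summing over $1 \leq l \leq 2k$ via the union bound gives the stated inequality $\p(I_k^c) \leq n^{-1} \sum_{l=1}^{2k} \gendeg_n^l$, and the further bound $\sum_{l=1}^{2k} \gendeg_n^l \leq (\gendeg_n+1)^{2k}$ is immediate from the binomial theorem (since every term $\gendeg_n^l$ appears with coefficient at least $1$ in the expansion of $(\gendeg_n+1)^{2k}$).

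For the second bound, Cauchy--Schwarz together with the symmetry $\E |S_k^v|^2 = \E |S_k^u|^2$ reduces the task to showing $\E |S_k^v|^2 \leq Ck(\gendeg_n + C)^{2k+C}$. To do this I couple the breadth-first exploration of $B_k^v$ with a Galton--Watson tree $T$ having $\mathrm{Binomial}(n, p_n)$ offspring distribution, so that $|S_k^v|$ is stochastically dominated by the total size $Y_k$ of $T$ up to depth $k$ (at each step of the exploration, each potential child is independently present with probability $p_n$, whether or not it would in the actual graph collide with a previously discovered vertex). Setting $\mu_j := \E Y_j$ and $\nu_j := \E Y_j^2$ and using the branching decomposition $Y_k = 1 + \sum_{i=1}^X Y_{k-1}^{(i)}$, where $X \sim \mathrm{Binomial}(n, p_n)$ and the $Y_{k-1}^{(i)}$ are i.i.d.\ copies of $Y_{k-1}$ independent of $X$, one obtains the recursions $\mu_k = 1 + \gendeg_n \mu_{k-1}$ and (after expanding the square and using $\mathrm{Var}(X) \leq \gendeg_n$) $\nu_k \leq \gendeg_n \nu_{k-1} + \mu_k^2$. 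Since $\mu_k \leq (\gendeg_n+1)^k$, iterating the recursion for $\nu_k$ yields a geometric-type sum $\nu_k \leq \sum_{j=0}^k \gendeg_n^{k-j}(\gendeg_n+1)^{2j}$, which is bounded by $Ck(\gendeg_n + C)^{2k+C}$ after routine manipulation (treating the cases $\gendeg_n \geq 1$ and $\gendeg_n < 1$ separately if needed).

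The only mild obstacle is being careful with the constants in the second-moment recursion and with the explicit breadth-first coupling that establishes the stochastic domination $|S_k^v| \leq Y_k$; both are standard and should pose no real difficulty.
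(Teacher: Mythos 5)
Your proof is correct and follows essentially the same route as the paper: a union bound over paths of length at most $2k$ from $v$ to $u$ for the first claim, and Cauchy--Schwarz followed by stochastic domination of $|S_k^v|$ by the population of a depth-$k$ Galton--Watson tree with $\mathrm{Binomial}$ offspring (the paper uses $\mathrm{Binomial}(n-1,p_n)$; your $\mathrm{Binomial}(n,p_n)$ also dominates and makes the recursion marginally cleaner) for the second. The branching recursions $\mu_k = 1 + \gendeg_n\mu_{k-1}$ and $\nu_k \le \gendeg_n\nu_{k-1} + \mu_k^2$ and the resulting sum $\sum_{j=0}^{k}\gendeg_n^{k-j}(\gendeg_n+1)^{2j}$ are exactly the ``standard formulas'' the paper invokes, and the constant-chasing you flag as routine does indeed go through.
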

\begin{proof}
The first assertion follows by a union bound over all possible paths from $v$ to $u$ that use at most $2k$ edges. The second assertion follows first by Cauchy-Schwarz, and then noting that $|S_k^v|, |S_k^u|$ are stochastically dominated by the total number of vertices in a depth $k$ Galton-Watson tree, with offspring distribution $\mathrm{Binomial}(n-1, p_n)$, and then concluding by using standard formulas for Galton-Watson trees.
\end{proof}


By iterating Lemma \ref{covariance-iteration} and applying Lemma \ref{galton-watson-size-lemma}, we obtain the following.

\begin{lemma}\label{covariance-sup-bound}
For any $k > 0$, we have
\[ \sup_{g_v, g_u} \mathrm{Cov}(g_v(B_k^v), g_u(B_k^u)) \leq  \frac{C k (\lambda_n + C)^{2k+C}}{n} \leq \frac{ (\gendeg_n + C)^{2k+C}}{n}.\]
Here the supremum is taken over all pairs of measurable functions with absolute value bounded by 1.
\end{lemma}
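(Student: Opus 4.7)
The plan is to iterate Lemma \ref{covariance-iteration} from depth $k$ down to depth $0$, using the tower property of conditional expectation to keep all the intermediate functions uniformly bounded by $1$.

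Fix measurable $g_v, g_u$ with $|g_v|, |g_u| \leq 1$, and for $0 \leq j \leq k$ define
\[ g_v^{(j)}(B_j^v) := \E[g_v(B_k^v) \mid B_j^v], \qquad g_u^{(j)}(B_j^u) := \E[g_u(B_k^u) \mid B_j^u]. \]
By Jensen's inequality each is bounded by $1$ in absolute value, so Lemma \ref{covariance-iteration} applies to any such pair. By the tower property $\E[g_v^{(j+1)}(B_{j+1}^v) \mid B_j^v] = g_v^{(j)}(B_j^v)$, and similarly for $u$. Applying Lemma \ref{covariance-iteration} with $k$ replaced by $j$ and $g_v, g_u$ replaced by $g_v^{(j+1)}, g_u^{(j+1)}$ yields
\[ \mathrm{Cov}(g_v^{(j+1)}(B_{j+1}^v), g_u^{(j+1)}(B_{j+1}^u)) \leq C\bigl(\p(I_j^c) + p_n \E|S_j^v||S_j^u|\bigr) + \mathrm{Cov}(g_v^{(j)}(B_j^v), g_u^{(j)}(B_j^u)). \]
Iterating from $j = k-1$ down to $j = 0$ and noting that $g_v^{(0)}, g_u^{(0)}$ are deterministic constants, so the final covariance vanishes, we obtain
\[ \mathrm{Cov}(g_v(B_k^v), g_u(B_k^u)) \leq C\sum_{j=0}^{k-1}\bigl(\p(I_j^c) + p_n \E|S_j^v||S_j^u|\bigr). \]

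The remainder is arithmetic using Lemma \ref{galton-watson-size-lemma}. Substituting $\p(I_j^c) \leq (\gendeg_n + 1)^{2j}/n$ and $p_n \E|S_j^v||S_j^u| \leq C j (\gendeg_n + C)^{2j + C}/n$ (after absorbing $p_n \leq \gendeg_n/n$), each summand is at most $C j (\gendeg_n + C)^{2j+C}/n$, so the whole sum is at most $C k (\gendeg_n + C)^{2k+C}/n$ (once $(\gendeg_n + C)^2 \geq 2$, the $k$ summands are dominated by $k$ times the last one). Taking the supremum over $g_v, g_u$ --- and noting that replacing $g_u$ by $-g_u$ flips the sign of the covariance, so the supremum is automatically nonnegative and agrees with $\sup |\mathrm{Cov}|$ --- gives the first stated inequality. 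The second inequality is obtained by absorbing the $Ck$ prefactor into the exponent using $k \leq 2^k \leq (\gendeg_n + C)^k$ for $C \geq 2$, and enlarging $C$ as allowed by the paper's convention.

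The argument is entirely formal given the two preceding lemmas, and there is no real obstacle. The one conceptual point that makes it work is that the iterated conditional expectations $g_v^{(j)}, g_u^{(j)}$ stay bounded by $1$ by Jensen, which both allows Lemma \ref{covariance-iteration} to be reapplied at every step and makes the base case at depth $0$ trivial.
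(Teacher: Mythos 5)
Your argument is exactly the one the paper has in mind; the paper simply abbreviates it to ``iterate Lemma~\ref{covariance-iteration} and apply Lemma~\ref{galton-watson-size-lemma},'' and your write-up correctly fills in the telescoping via the iterated conditional expectations $g_v^{(j)}$, $g_u^{(j)}$, whose uniform boundedness by Jensen is the one point that makes the iteration legitimate. (The cosmetic absorption of the $Ck$ factor into $(\gendeg_n+C)^{2k+C}$ in the second inequality is handled a bit loosely both by you and by the paper; it is harmless because all quantities involved are trivially bounded by $1$ and the paper's downstream uses truncate at $1$, as in the definition of $\rho_k(n)$.)
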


With all the notation set, we now go on a slight diversion and quickly prove the following lemma. It is needed for Lemma \ref{remainder-lemma}.

\begin{lemma}\label{nbd-tree-prob}
We have
\[ \p(B_k^v \text{ is not a tree}) \leq \frac{(\gendeg_n + C)^{2k+C}}{n}. \]
\end{lemma}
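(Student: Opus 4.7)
I would prove this via a breadth-first search (BFS) argument. Let $S_j^v$ denote the vertex set of $B_j(v, \geen)$ and $D_j^v := S_j^v \setminus S_{j-1}^v$ the vertices at graph distance exactly $j$ from $v$. The key claim is that $B_k^v := B_k(v, \geen)$ fails to be a tree if and only if one of two types of ``collisions'' occurs within the first $k$ BFS levels: (a) an intra-level collision at some level $j \in \{1, \ldots, k-1\}$, meaning two distinct vertices of $D_j^v$ are joined by an edge of $\geen$; or (b) a multi-parent collision at some level $j \in \{1, \ldots, k\}$, meaning some vertex of $V_n \setminus S_{j-1}^v$ has at least two neighbors in $D_{j-1}^v$. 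The reverse direction is immediate, since each collision type explicitly exhibits a cycle in $B_k^v$. For the forward direction, traversing any cycle $C \sse B_k^v$ yields a closed sequence of BFS levels along $C$; one checks that this sequence must contain either two equal consecutive values (giving an intra-level edge) or a strict local maximum (forcing a vertex with two cycle-neighbors at the previous level, i.e., a multi-parent event).

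Given this equivalence, by Markov's inequality it suffices to upper bound the expected total number of collisions. Conditional on $D_j^v$, the expected number of intra-level edges at level $j$ is at most $\binom{|D_j^v|}{2} p_n \leq \frac{1}{2} |D_j^v|^2 p_n$, and conditional on $D_{j-1}^v$, the expected number of multi-parent pairs at level $j$ is at most $n \binom{|D_{j-1}^v|}{2} p_n^2 \leq \frac{1}{2} n |D_{j-1}^v|^2 p_n^2$. Using that $|D_j^v|$ is stochastically dominated by the size of the $j$th generation of a Galton-Watson process with offspring distribution $\mathrm{Binomial}(n-1, p_n)$, the standard variance formula yields $\E|D_j^v|^2 \leq C(\gendeg_n + 1)^{2j}$. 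Summing these bounds over $j$ from $1$ to $k$ and using $p_n = \gendeg_n/n$, the total expected number of collisions is of order $(\gendeg_n + C)^{2k + C}/n$, which gives the claim.

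The main delicate point is the combinatorial equivalence, specifically for cycles in $B_k^v$ containing a vertex at BFS level exactly $k$. An intra-level edge between two vertices of $D_k^v$ is not itself in $B_k^v$, since it would appear only on paths from $v$ of length at least $k+1$. Consequently, such a cycle cannot have a ``flat'' portion at level $k$: any two consecutive cycle vertices at level $k$ would be joined by a forbidden edge. Thus either the maximum level along the cycle is strictly less than $k$ (reducing to the easier case where the full analysis applies at level $\leq k-1$), or level $k$ is attained at a single vertex whose two cycle-neighbors lie at level $k-1$, producing a multi-parent collision at level $k$. This case analysis completes the equivalence and hence the proof.
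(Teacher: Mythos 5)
Your proof is correct and follows essentially the same breadth-first-search, layer-by-layer approach as the paper. The paper conditions on $B_{k-1}^v$ being a tree, bounds the probability that adding the next layer creates a cycle, and iterates; you enumerate the two kinds of cycle-creating events (intra-level edges at levels $\leq k-1$ and multi-parent vertices at levels $\leq k$) and apply Markov's inequality to the expected total count — the underlying computation is the same, and both rest on the same second-moment estimate for layer sizes via Galton--Watson comparison. One small observation: the paper's displayed per-layer bound $1_{A_{k-1}}\binom{|D_{k-1}^v|}{2}p_n$, as written, only accounts for the intra-level collision; your separate count of multi-parent collisions (which carries an extra factor $np_n^2 \cdot \binom{|D_{k-1}^v|}{2} = \gendeg_n\binom{|D_{k-1}^v|}{2}p_n$) is the more careful accounting. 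The discrepancy is harmless for the stated conclusion, since the extra factor of $\gendeg_n+1$ is absorbed by enlarging the constant $C$ in the exponent, but your version is tighter as an argument.
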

\begin{proof}
Let $A_k$ be the event that $B_k^v$ is a tree. We have
\[ 1_{A_{k-1}} \p(A_k^c ~|~ B_{k-1}^v) \leq 1_{A_{k-1}} \binom{|D_{k-1}^v|}{2} p_n \leq 1_{A_{k-1}} |D_{k-1}^v|^2 p_n. \]
Upon taking expectations and iterating, we obtain (note $\p(A_1^c) = 0$)
\[ \p(A_k^c) \leq  p_n \E \sum_{j=1}^{k-1} |D_j^v|^2 . \]
Observe
\[ \sum_{j=1}^{k-1} |D_j^v|^2 \leq |S_{k-1}^v|^2, \]
and finish by observing that $\E |S_{k-1}^v|^2 \leq (\gendeg_n + C)^{2k+C}$, as noted in the proof of Lemma \ref{galton-watson-size-lemma}.
\end{proof}



By rewriting the proof of Lemma \ref{coupling-ind-lemma} in a more general form, we may obtain the following.

\begin{lemma}\label{nbd-joint-ind-complicated}
Let $E_n := \{(i, j), 1 \leq i < j \leq n\}$. Let $e_0 = (v_0, u_0)$, $e'_0 = (v'_0, u'_0)$ have distinct vertices. Let $F \sse E_n - \{e_0\}$, $F' \sse E_n - \{e'_0\}$. Define
\[ \mbf{B}_k := (B_k(v_0, \geen), B_k(v_0, \geen^F), B_k(u_0, \geen), B_k(u_0, \geen^F)), \]
\[ \mbf{B}_k' := (B_k(v'_0, \geen), B_k(v_0', \geen^{F'}), B_k(u'_0, \geen), B_k(u'_0, \geen^{F'})). \]
Let $N_k$ be the number of vertices in $\mbf{B}_k$, and $N_k'$ the number of vertices in $\mbf{B}_k'$. Let $I_k$ be the event that the vertex sets of $\mbf{B}_k$ and $\mbf{B}_k'$ intersect. There is a coupling $(\mbf{B}_{k+1}, \mbf{B}_{k+1}', \tilde{\mbf{B}}_{k+1}, \tilde{\mbf{B}}_{k+1}')$ such that on $I_k$, we have that $\tilde{\mbf{B}}_{k+1}, \tilde{\mbf{B}}_{k+1}'$ are conditionally independent given $\mbf{B}_k, \mbf{B}_k'$, and the law of $\tilde{\mbf{B}}_{k+1}$ given $\mbf{B}_{k}$, $\mbf{B}_{k}'$ is the law of $\mbf{B}_{k+1}$ given $\mbf{B}_k$, and the law of $\tilde{\mbf{B}}_{k+1}'$ given $\mbf{B}_{k}, \mbf{B}_{k}'$ is the law of $\mbf{B}_{k+1}'$ given $\mbf{B}_k'$. Moreover, we have
\[ 1_{I_k} \p(\tilde{\mbf{B}}_{k+1} \neq \mbf{B}_{k+1} ~|~ \mbf{B}_{k}, \mbf{B}_{k}') \leq 1_{I_k} C N_k N_k' p_n,  \]
\[ 1_{I_k} \p(\tilde{\mbf{B}}_{k+1}' \neq \mbf{B}_{k+1}' ~|~ \mbf{B}_{k}, \mbf{B}_{k}') \leq 1_{I_k} C N_k N_k' p_n.  \]
\end{lemma}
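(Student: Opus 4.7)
The plan is to directly generalize the construction in the proof of Lemma \ref{coupling-ind-lemma}. Instead of a single tree at each of two roots, we now track four trees at each root, namely $B_k(v_0, \geen), B_k(v_0, \geen^F), B_k(u_0, \geen), B_k(u_0, \geen^F)$ on one side, and the analogous primed quartet on the other. Following the convention of Lemma \ref{coupling-ind-lemma}, I read $I_k$ as the event that the vertex sets of $\mbf{B}_k$ and $\mbf{B}_k'$ are \emph{disjoint} (the statement's ``intersect'' is evidently a typo: the conditional independence claim only makes sense under disjointness). Let $\mbf{S}_k$ be the union of the vertex sets of the four trees in $\mbf{B}_k$, and $\mbf{S}_k'$ the analogue; up to a harmless constant factor we have $|\mbf{S}_k| \leq N_k$ and $|\mbf{S}_k'| \leq N_k'$.

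First I would set up an iterative breadth-first-search description. On $I_k$, the step $\mbf{B}_k \mapsto \mbf{B}_{k+1}$ is a deterministic function of $\mbf{B}_k$ together with the edge data $(w_e, b_e, w_e', b_e')$ for edges $e$ incident to a depth-$k$ vertex of some tree in $\mbf{B}_k$. As in the $X_1 \cup X_3$ identity in Lemma \ref{coupling-ind-lemma}, these edges split into \emph{private} edges (whose non-$\mbf{S}_k$ endpoint lies in $V_n \setminus (\mbf{S}_k \cup \mbf{S}_k')$) and \emph{cross} edges (whose non-$\mbf{S}_k$ endpoint lies in $\mbf{S}_k'$); a symmetric decomposition holds for $\mbf{B}_{k+1}'$. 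Since the underlying edge data is product over edges and $\mbf{S}_k \cap \mbf{S}_k' = \varnothing$ on $I_k$, the private pieces on the two sides are conditionally independent given $(\mbf{B}_k, \mbf{B}_k')$; the sole source of dependence between $\mbf{B}_{k+1}$ and $\mbf{B}_{k+1}'$ is the shared cross-edge data.

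Next I would construct $(\tilde{\mbf{B}}_{k+1}, \tilde{\mbf{B}}_{k+1}')$ by re-randomizing the cross edges on each side using two fresh independent copies of the edge data, exactly as in the proof of Lemma \ref{coupling-ind-lemma}. By construction $\tilde{\mbf{B}}_{k+1}$ and $\tilde{\mbf{B}}_{k+1}'$ are conditionally independent given $(\mbf{B}_k, \mbf{B}_k')$, and since we have merely replaced the cross-edge data by an i.i.d.\ copy (preserving its unconditional joint law), the conditional law of $\tilde{\mbf{B}}_{k+1}$ given $(\mbf{B}_k, \mbf{B}_k')$ coincides with the conditional law of $\mbf{B}_{k+1}$ given $\mbf{B}_k$, and similarly for the primed side. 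For the error bound, the event $\{\tilde{\mbf{B}}_{k+1} \neq \mbf{B}_{k+1}\}$ is contained in the event that at least one of the at most $N_k N_k'$ cross edges has $b_e = 1$ or $b_e' = 1$ in either the original or the independent copy; a union bound, using that each such Bernoulli has success probability $p_n$, yields $1_{I_k} C N_k N_k' p_n$, and the primed side is identical. The main (minor) obstacle is the notational bookkeeping of running BFS simultaneously in four graphs at two roots while also re-randomizing both the $(W, B)$ and the $(W', B')$ data on edges in $F$; but the probabilistic content is entirely that of Lemma \ref{coupling-ind-lemma}.
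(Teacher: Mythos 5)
Your proposal is correct and follows essentially the same route as the paper's proof: decompose the BFS step into private edges and cross edges (the paper's $X_1$, $X_2$, $X_3$), replace the cross-edge data on each side by fresh independent copies with the correct conditional law, and bound the probability of a discrepancy by a union bound over cross edges using that each indicator is stochastically dominated by $\mathrm{Bernoulli}(p_n)$. You also correctly catch that ``intersect'' in the lemma statement should read ``do not intersect,'' matching the convention in Lemma~\ref{coupling-ind-lemma} and the way $I_k$ is actually used in the paper's argument.
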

\begin{proof}
Let $V_n$ be the vertex set of $\geen$.
Let $S_k$ be the vertex set of $\mbf{B}_k$ (more precisely, the union of the vertex sets of the four graphs which make up $\mbf{B}_k$), and let $S_k'$ be the vertex set of $\mbf{B}_k'$.
For $S_1, S_2 \sse V_n$, define
\[ X(S_1, S_2) := \{(e, w_e, b_e, w_e', b_e') : e = (v, u), v \in S_1, u \in S_2  \}. \]
There is some function $\Psi$, which depends on $F$, such that
\[ \mbf{B}_{k+1} = \Psi(\mbf{B}_k, X(S_k, V_n)).  \]
Similarly, there is a function $\Psi'$ which depends on $F'$ such that
\[ \mbf{B}_{k+1}' = \Psi(\mbf{B}_k', X(S_k', V_n)). \]
Define
\begin{align*}
X_1 &:= X(S_k, V_n - S_k'),  \\
X_2 &:= X(S_k', V_n - S_k),  \\
X_3 &:= X(S_k, S_k'). 
\end{align*}
Observe then that
\[ X(S_k, V_n) = X_1 \cup X_3, ~~ X(S_k', V_n)  = X_2 \cup X_3. \]
Moreover, note that on $I_k$, we have that $X_1, X_3$ are conditionally independent given $\mbf{B}_k, \mbf{B}_k'$. We may thus construct $\tilde{\mbf{B}}_{k+1}, \tilde{\mbf{B}}_{k+1}'$ as follows. Conditional on $\mbf{B}_k, \mbf{B}_k'$,  let $\tilde{X}_3$ be distributed as $X_3$, conditional on $\mbf{B}_k$, and let $\tilde{X}_3'$ be distributed as $X_3$, conditional on $\mbf{B}_k'$. Moreover, let $\tilde{X}_3, \tilde{X}_3'$ be independent of each other and everything else, conditional on $\mbf{B}_k, \mbf{B}_k'$. Then on $I_k$, define
\[ \tilde{\mbf{B}}_{k+1} := \Psi(\mbf{B}_k, X_1 \cup \tilde{X}_3), ~ \tilde{\mbf{B}}_{k+1}' := \Psi'(\mbf{B}_k', X_2 \cup \tilde{X}_3').   \]
By construction, on the event $I_k$, we have that $\tilde{\mbf{B}}_{k+1}, \tilde{\mbf{B}}_{k+1}'$ are conditionally independent given $\mbf{B}_k, \mbf{B}_k'$. Moreover, observe that on the event $I_k$, conditional on $\mbf{B}_k, \mbf{B}_k'$, we have that $X_1 \cup \tilde{X}_3$ has the law of $X(S_k, V_n)$ conditional only on $\mbf{B}_k$. Thus on $I_k$, the law of $\tilde{\mbf{B}}_{k+1}$ conditional on $\mbf{B}_k, \mbf{B}_k'$ is exactly the law of $\mbf{B}_{k+1}$ conditional on $\mbf{B}_k$. The analogous statement is true for $\tilde{\mbf{B}}_{k+1}'$. 

To finish, we need to show
\[ 1_{I_k} \p(\tilde{\mbf{B}}_{k+1} \neq \mbf{B}_{k+1} ~|~ \mbf{B}_{k}, \mbf{B}_{k}') \leq 1_{I_k} C N_k N_k' p_n. \]
The proof for $\tilde{\mbf{B}}_{k+1}'$ will be the exact same. To set notation, write
\[ \tilde{X}_3 = \{(e, \tilde{w}_e, \tilde{b}_e, \tilde{w}_e', \tilde{b}_e') : e = (v, u), v \in S_k, u \in S_k' \}. \]
Observe that if for all $e = (v, u)$, $ v\in S_k$, $u \in S_k'$, we have $b_e, b_e', \tilde{b}_e, \tilde{b}_e' = 0$, then necessarily $\tilde{\mbf{B}}_{k+1} = \mbf{B}_{k+1}$. Thus it suffices to bound the probability that this event doesn't happen. 
The point is that on the event $I_k$, for $e = (v, u)$, $v \in S_k$, $u \in S_k'$, the conditional distribution of any of the $b_e, b_e', \tilde{b}_e, \tilde{b}_e'$ given $\mbf{B}_k, \mbf{B}_k'$ is either $\mathrm{Bernoulli}(p_n)$, or the point mass at 0 (in words, either the edge $e$ is left unrestricted, or it forced to not be present). We now finish by the union bound, along with the fact that if $U$ is a random variable whose distribution is either identically 0 or $\mathrm{Bernoulli}(p_n)$, then 
\[ \p(U = 1) \leq p_n. \qedhere\]
\end{proof}

We may now deduce the following lemma from Lemma \ref{nbd-joint-ind-complicated} in the same way we deduced Lemma \ref{covariance-sup-bound} from Lemma \ref{coupling-ind-lemma}. Here we additionally use the fact that if two random variables lie in the interval $[-1, 1]$, then their covariance must also be in $[-1, 1]$.

\begin{lemma}\label{nbd-ind-complicated}
For any $k > 0$, we have
\[ \sup_{g, g'} \mathrm{Cov}(g(\mbf{B}_{k}), g'(\mbf{B}_k')) \leq  \min\bigg(\frac{(\gendeg_n + C)^{2k+C}}{n}, 1 \bigg). \]
Here the supremum is taken over all pairs of measurable functions which have absolute value bounded by 1.
\end{lemma}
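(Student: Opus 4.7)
The plan is to mimic the derivation of Lemma \ref{covariance-sup-bound} from Lemma \ref{coupling-ind-lemma}, but using Lemma \ref{nbd-joint-ind-complicated} as the single-step coupling. The trivial upper bound of $1$ follows immediately from $|g|, |g'| \leq 1$, so the content of the lemma is the other bound, which I would prove by a standard telescoping argument.

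First, I would upgrade Lemma \ref{nbd-joint-ind-complicated} into a one-step covariance recursion analogous to Lemma \ref{covariance-iteration}: for bounded measurable $g, g'$ with $|g|, |g'| \leq 1$,
\[ |\mathrm{Cov}(g(\mbf{B}_{k+1}), g'(\mbf{B}_{k+1}'))| \leq C\bigl(\p(I_k^c) + p_n\, \E(N_k N_k')\bigr) + |\mathrm{Cov}(G_{k+1}(\mbf{B}_k), G'_{k+1}(\mbf{B}_k'))|, \]
where $G_{k+1}(\mbf{B}_k) := \E(g(\mbf{B}_{k+1}) \mid \mbf{B}_k)$ and $G'_{k+1}(\mbf{B}_k') := \E(g'(\mbf{B}_{k+1}') \mid \mbf{B}_k')$. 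To see this, I would replace $\mbf{B}_{k+1}, \mbf{B}_{k+1}'$ by $\tilde{\mbf{B}}_{k+1}, \tilde{\mbf{B}}_{k+1}'$ from Lemma \ref{nbd-joint-ind-complicated}; the replacement error is bounded by the unconditional probability of a discrepancy, which by the coupling bound and a trivial bound on $I_k^c$ is at most $C\p(I_k^c) + Cp_n\,\E(N_k N_k')$. Then I would apply the law of total covariance to $(g(\tilde{\mbf{B}}_{k+1}), g'(\tilde{\mbf{B}}_{k+1}'))$: on $I_k$ these are conditionally independent given $(\mbf{B}_k, \mbf{B}_k')$ (so the conditional covariance vanishes), with conditional marginal laws depending only on $\mbf{B}_k$ and $\mbf{B}_k'$ respectively, and the resulting covariance of conditional expectations is exactly $\mathrm{Cov}(G_{k+1}(\mbf{B}_k), G'_{k+1}(\mbf{B}_k'))$ up to a further $C\p(I_k^c)$ from the $I_k^c$ portion.

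Next, I would iterate this recursion. Since $|G_{k+1}|, |G'_{k+1}| \leq 1$, the recursion can be applied again at level $k$, producing a telescoping sum. The base case $k=0$ is trivial because $\mbf{B}_0, \mbf{B}_0'$ are deterministic (just the roots with no edges), so their covariance vanishes. This yields
\[ |\mathrm{Cov}(g(\mbf{B}_k), g'(\mbf{B}_k'))| \leq C \sum_{j=0}^{k-1} \bigl[\p(I_j^c) + p_n\, \E(N_j N_j')\bigr]. \]

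Finally, I would bound each summand. By a union bound over paths of length at most $2j$ from $\{v_0, u_0\}$ to $\{v'_0, u'_0\}$ (exactly as in Lemma \ref{galton-watson-size-lemma}), $\p(I_j^c) \leq (\gendeg_n + 1)^{2j}/n$. For $\E(N_j N_j')$, I would use Cauchy--Schwarz and stochastically dominate $N_j, N_j'$ by four times the total size of a depth-$j$ Galton--Watson tree with $\mathrm{Binomial}(n-1, p_n)$ offspring distribution, exactly as in Lemma \ref{galton-watson-size-lemma}, producing $\E(N_j N_j') \leq (\gendeg_n + C)^{2j + C}$. Combining these, noting $p_n \leq \gendeg_n/n$, and collecting the dominant $j = k-1$ term gives the claimed bound $(\gendeg_n + C)^{2k+C}/n$. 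The proof is essentially bookkeeping; the only subtlety is maintaining the boundedness of the conditional-expectation functions so that the one-step recursion can be iterated without losing factors, which is automatic since conditioning is contractive in sup norm.
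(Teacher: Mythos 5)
Your proof is correct and is essentially the same as the paper's: the paper's own proof is a one-line remark that Lemma \ref{nbd-ind-complicated} follows from Lemma \ref{nbd-joint-ind-complicated} in exactly the way Lemma \ref{covariance-sup-bound} follows from Lemma \ref{coupling-ind-lemma} (i.e., via an analogue of Lemma \ref{covariance-iteration}, iterated, with the size/intersection bounds of Lemma \ref{galton-watson-size-lemma}), plus the trivial $[-1,1]$ covariance bound — exactly the law-of-total-covariance telescoping, base case, and summand estimates you describe.
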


\section{Concluding remarks}\label{section:concluding-remarks}

A natural direction for future work is to try to prove a central limit theorem for minimum matching in the mean field setting, following the same strategy as was used for optimal edge cover. One of the main difficulties is in proving the analog of Proposition \ref{v-lambda-exp-conv} (recall this proposition allowed us to take $\lambda \toinf$ with $n$) for minimum matching. To do so, we need to analyze the following operator (see \cite{ParWast2017, Wast2009, Wast2012}). Let $\lambda > 0$. Given $F : [-\lambda / 2, \lambda / 2] \ra [0, 1]$, define $V_\lambda F : [-\lambda / 2, \lambda / 2] \ra [0, 1]$ by the following:
\[ (V_\lambda F)(x) := \exp(-\int_{-x}^{\lambda / 2} F(\ell) d\ell). \]
From some simulations, we don't think the analog of Proposition \ref{v-lambda-exp-conv} is actually true for this operator $V_\lambda$, because it seems that $\alpha(\lambda)$ in fact converges to 1 as $\lambda \toinf$. The difficulty is then trying to understand the rate of convergence of $\alpha(\lambda)$, i.e. does it behave like $1 - \frac{1}{\lambda}$, or $1 - \frac{1}{\log \lambda}$, or something else.

Another direction is to consider vertex-weighted graphs instead of edge-weighted graphs. For example, \cite{GNS2005} proves the long-range independence property for the maximum weight independent set problem, when the average vertex degree (i.e. $\gendeg$) is at most $2e$. This problem is a combinatorial optimization problem on vertex-weighted graphs. All the arguments in proving Theorem \ref{main-result} should carry over with small modifications to the vertex-weighted case; we decided not to include this in the paper because we couldn't figure out a good way to have one reasonable set of notation that covers both cases.

It is also possible to apply Theorem \ref{main-result} to functions of sparse random graphs which are not combinatorial optimization problems. For example, Dembo and Montanari \cite{DM2010} use a form of the Objective method to compute limiting constants for the free energy of Ising models on locally tree-like graphs (this includes sparse Erd\H{o}s-R\'{e}nyi graphs). One may use the results  of \cite{DM2010} to verify {\localprop} for the free energy of the Ising model on a sparse Erd\H{o}s-R\'{e}nyi graph. In particular, Theorem 3.1 of \cite{DM2010} can be used to check (A1), and Lemma 4.3 of \cite{DM2010} can be used to check (A3). Thus one may prove a central limit theorem for the free energy.

Finally, a natural follow up question would be to try to determine the asymptotic behavior of the variance. Note in our setting $n$ will be the right order for the variance, since we are assuming that the variance is at least order n, and by the assumption \eqref{f-regularity} and the Efron-Stein inequality, the variance will be at most order n. We would thus expect that $\Var{f(\geen)} / n \ra \sigma^2$, where $\sigma^2$ is a constant. This would then allow us to divide by $n^{1/2}$ rather than $(\Var{f(\geen)})^{1/2}$ in our central limit theorems. Moreover, we might expect that the constant $\sigma^2$ can be explicitly determined. The natural approach to showing such a result would be to try to use the Objective method, since as mentioned in the introduction it has been very successful for analyzing means. However, Aldous and Steele write in their survey on the Objective method \cite[Section 7]{AldSte2004} that it cannot be used to determine the asymptotic behavior of the variance. Thus it seems that there currently does not exist a general method for analyzing the variance. Moreover, even problem specific results are few and far between. In the case of minimum matching, the only results we are aware of are \cite{Wast2005b, Wast2010}, where it is further assumed that the edge weights are Exponential (though to be fair, exact finite $n$ formulas are proven under this assumption), to take advantage of special properties of the Exponential distribution. However, it is expected, but not rigorously proven, that the formula for $\sigma^2$ should still hold for more general edge weight distributions (see \cite{MPS2019}). Thus there is still much to be understood about the variance of random optimization problems.




\section*{Acknowledgments}

We thank Sourav Chatterjee for helpful conversations and encouragement. We also thank the anonymous referees for many helpful comments and suggestions.

\addtocontents{toc}{\protect\setcounter{tocdepth}{0}}

\end{document}